\newcommand{\gH}{H_{\infty,\infty}}
\newcommand\tr{(t,r)}
\newcommand\Z{\mathbb{Z}}
\definecolor{codegreen}{rgb}{0,0.6,0}
\definecolor{codegray}{rgb}{0.5,0.5,0.5}
\definecolor{codepurple}{rgb}{0.58,0,0.82}
\definecolor{backcolour}{rgb}{0.95,0.95,0.92}
\lstdefinestyle{mystyle}{
    backgroundcolor=\color{backcolour},   
    commentstyle=\color{codegreen},
    keywordstyle=\color{magenta},
    numberstyle=\tiny\color{codegray},
    stringstyle=\color{codepurple},
    basicstyle=\ttfamily\footnotesize,
    breakatwhitespace=false,         
    breaklines=true,                 
    captionpos=b,                    
    keepspaces=true,                 
    numbers=left,                    
    numbersep=5pt,                  
    showspaces=false,                
    showstringspaces=false,
    showtabs=false,                  
    tabsize=2
}
\newtheorem{theorem}{Theorem}[section]
\newtheorem{lemma}[theorem]{Lemma}
\newtheorem{corollary}[theorem]{Corollary}
\theoremstyle{definition}
\newtheorem{definition}[theorem]{Definition}
\newtheorem{remark}[theorem]{Remark}
\newtheorem{question}{Question}
\newtheorem{problem}[question]{Problem}
\newtheorem{conjecture}[theorem]{Conjecture}
\numberwithin{equation}{section}
\title{$(t,r)$ Broadcast Domination Numbers and Densities of the Truncated Square Tiling Graph}
\author{Jillian Cervantes}
\author[Harris]{Pamela E. Harris}
\address[J.~Cervantes and P.~E.~Harris]{Department of Mathematical Sciences, University of Wisconsin-Milwaukee, Milwaukee, WI 53211}
\email{\textcolor{blue}{\href{mailto:cervan32@uwm.edu}{cervan32@uwm.edu}} and \textcolor{blue}{\href{mailto:peharris@uwm.edu}{peharris@uwm.edu}}}
\begin{document}

\begin{abstract}
For a pair of positive integer parameters $(t,r)$, a subset $T$ of vertices of a graph $G$ is said to $(t,r)$ broadcast dominate a graph 
$G$ if, for any vertex $u$ in $G$, we have $\sum_{v\in T, u\in N_t(v)}(t-d(u,v))\geq r$, where  
where $N_{t}(v)=\{u\in V:d(u,v)<t\}$ and $d(u,v)$ denotes the distance between $u$ and $v$. 
This can be interpreted as each vertex $v$ of $T$ sending $\max(t-\text{d}(u,v),0)$ signal to vertices within a distance of $t-1$ away from $v$. 
The signal is additive and we require that every vertex of the graph receives a minimum reception $r$ from all vertices in $T$.
For a finite graph the smallest cardinality among all $(t,r)$ broadcast dominating sets of a graph is called the $(t,r)$ broadcast domination number.
We remark that the $(2,1)$ broadcast domination number is the domination number and the $(t,1)$ (for $t\geq 1$) is the  distance domination number of a graph.

We study
a family of graphs that arise as a finite subgraph of the truncated square titling, which utilizes regular squares and octagons to tile the Euclidean plane. 
For positive integers $m$ and $n$, we let $H_{m,n}$ be the graph consisting of $m$ rows of $n$ octagons (cycle graph on $8$ vertices). 
For all $t\geq 2$, we provide 
lower and upper bounds for the
$(t,1)$ broadcast domination number for $H_{m,n}$ for all $m,n\geq 1$. 
We give exact $(2,1)$ broadcast domination numbers for $H_{m,n}$ when $(m,n)\in\{(1,1),(1,2),(1,3),(1,4),(2,2)\}$.
We also consider the infinite truncated square tiling, denoted $H_{\infty,\infty}$, and we provide constructions of infinite $(t,r)$ broadcasts for $(t,r)\in\{(2,1),(2,2),(3,1),(3,2),(3,3),(4,1)\}$. 
Using these constructions we give upper bounds on the density of these broadcasts i.e., the proportion of vertices needed to $(t,r)$ broadcast dominate this infinite graph. 
We end with some directions for future study.
\end{abstract}
\maketitle

\section{Introduction}

Introduced by Blessing, Insko, Johnson and Mauretour \cite{Blessing} the $(t,r)$ \textit{broadcast domination number} of a graph $G$, denoted $\gamma_{t,r}(G)$, depends on two positive integer parameters $(t,r)$ with $1\leq r\leq t$. The $(t,r)$ broadcast domination number is defined as the smallest cardinality of a $\tr$ broadcast, which is a subset of vertices $T\subseteq V(G)$ satisfying 
that for any
$u \in V(G)$, 
\begin{align}
f(u)\coloneqq\sum\limits_{\substack{v \in T \\ u \in N_t(v)}}(t-d(u,v))\geq r,\label{eq:f(u)}
\end{align} where $N_{t}(v)=\{u\in V:d(u,v)<t\}$.
If $T$ is a $\tr$ broadcast, we call $v\in T$ a \textit{broadcasting vertex} of strength $t$, or simply a broadcasting vertex, if the parameter $t$ is clear from context.
Note that when $t=r=1$, then $\gamma_{1,1}(G)=|V(G)|$.
When $t=2$ and $r=1$, the $\tr$ broadcast domination number of a graph is precisely the domination number, and when $t>2$ and $r=1$ it is the $(t-1)$-distance domination number.
Thus $\tr$ broadcast domination numbers generalize domination and distance domination numbers. We recall that determining a smallest dominating set for a graph is known to be an NP-hard problem (a proof is provided by Garey and Johnson in \cite{NPhard}).
Many have considered the domination problem for specific graph families including grid graphs, complete grid graphs, cross product of paths, and graphs with minimum degree ~\cite{cherifietal,completegridgraph,DomInGraphs,Chang,DomInGraphsWithMinDegree,Jacobson}.
For a survey on the subject of domination parameters and some generalizations we recommend the work of Haynes, Hedetniemi, and Slater~\cite{HHS}.

In their work, Blessing et al.~ \cite{Blessing} gave $(t,r)$ broadcast domination numbers for small grid graphs with $(t,r)\in\{(2,2),(3,1),(3,2),(3,3)\}$. They also provided bounds for these numbers when considering arbitrarily large grid graphs. 
Motivated by Blessing et al.'s work on arbitrarily large grid graphs,  Harris, Drews, Randolph \cite{Tim} introduce the notion of an optimal $(t,r)$ broadcast on the infinite grid $\Z\times\Z$, which is defined as the  smallest proportion of vertices (density) in a $(t,r)$ broadcast of the infinite grid graph. 
Drews et al. constructed optimal $(t,r)$ broadcasts for all $t\geq 2$, and $r=1$ and $r=2$. 
Harris, Luque, Reyes Flores, and Sepulveda \cite{triangles} constructed \textit{efficient} $(t,r)$ broadcasts on the infinite triangular grid graph, which are those broadcasts that minimize the excess signal received by non-broadcasting vertices. 
Harris et al. constructed efficient $(t,r)$ broadcast dominating sets for all $t\geq r\geq 1$ by using a geometric interpretation of the shape of the region of vertices receiving reception from a broadcasting vertex. 

In the present, we consider the infinite truncated square tiling (thought of as an infinite graph) which utilizes squares and octagons to tile the Euclidean plane.
We denote this infinite graph by $\gH$ and illustrate in Figure~\ref{fig:infinite}.
We also consider
a family of graphs that arise as a finite subgraph of the truncated square tiling, which we  denote by $H_{m,n}$, where $m$ and $n$ are positive integers, and $H_{m,n}$ consists of $m$ rows each with $n$ octagons. See Figure~\ref{fig:22a} for an illustration.

\begin{figure}[h]
    \centering
    \begin{subfigure}[t]{0.3\textwidth}
        \centering
\includegraphics[height=2in,trim=.5cm .5cm .5cm .5cm,clip]{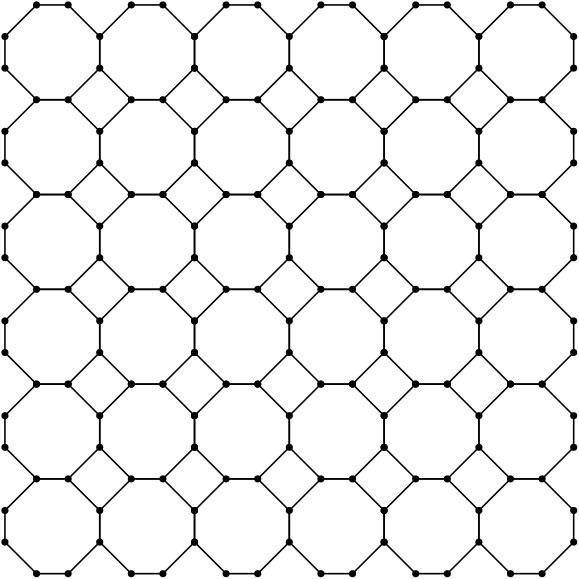}
    \caption{$\gH$}\label{fig:infinite}
    \end{subfigure}
\qquad\qquad
    \begin{subfigure}[t]{0.3\textwidth}
        \centering
\includegraphics[height=1.85in,trim=0in 0 0 0in,clip]{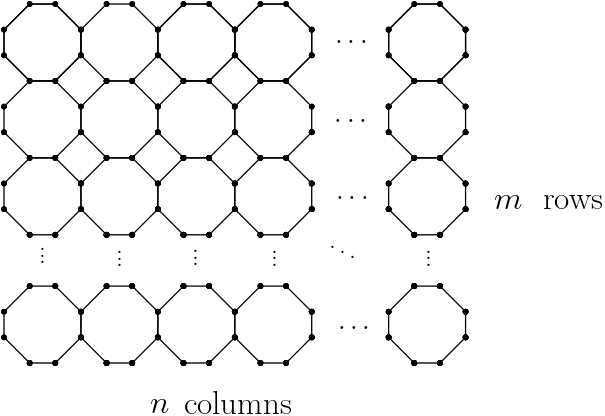}
        \caption{$H_{m,n}$}
        \label{fig:22a}
    \end{subfigure}
    \caption{The infinite truncated square graph and the finite graph $H_{m,n}$ consisting of $m$ rows each with $n$ columns of octagons.}
\end{figure}

In our first result, Theorem~\ref{thm:dom_1,n}, we 
establish the $(2,1)$ broadcast domination number (i.e. the domination number) for $H_{1,n}$ when $1\leq n\leq 4$, and for $H_{2,2}$. We illustrate these results in Figure~\ref{fig:dom_m=1} and Figure~\ref{fig:h_2_2_dom}, respectively. 
Using the domination numbers of these small graphs, in 
Theorems \ref{thm:initial_upper_bd} and \ref{2x2_bound} we provide initial upper bounds for the $(2,1)$ broadcast domination number of $H_{m,n}$ for any $m,n\geq 1$.
We improve these upper bounds in Theorem \ref{thm:better_upper_bd}.
In Theorem~\ref{thm:dom_low_bd} we establish a lower bound for the $(2,1)$ broadcast domination number of $H_{m,n}$ for any $m,n\geq 1$. Together these results show that for any $m,n\geq 1$ the $(2,1)$ broadcast  domination number of $H_{m,n}$ satisfies
    \begin{align*}
\displaystyle\Big\lceil \frac{2m+n(4m+2)}{4} \Big\rceil\leq \gamma_{2,1}(H_{m,n})\leq n(m+1) + m. 
    \end{align*}

Our results related to $\gH$ are as follows.
For $(t,r)\in\{(2,1),(2,2),(3,1),(3,2),(3,3),(4,1)\}$, we give
constructions of $(t,r)$ broadcasts for $\gH$.
Using these constructions we establish upper bounds on the density of these broadcasts,  defined as the proportion of vertices needed to $(t,r)$ broadcast dominate $\gH$. 
This gives upper bounds on the \emph{optimal density} of a $(t,r)$ broadcast of $\gH$,  defined as the minimum broadcast density over all $(t,r)$ broadcasts for $\gH$, which we denote by $\delta_{t,r}(\gH)$.

In Figure~\ref{fig:densities}, 
we illustrate each of the $(t,r)\in\{(t,r)\in\{(2,1),(2,2),(3,1),(3,2),(3,3),(4,1)\}$ broadcasts for $\gH$.
The circled vertices are broadcasting vertices, and 
each colored region depicts the $t-1$ neighborhood of a broadcasting vertex.
Throughout, we refer to the $t-1$ neighborhood of a broadcasting vertex as its \textit{reach}.
Whenever a non-broadcasting vertex $u$ lies within reach of multiple broadcasting vertices $v_1,v_2,\ldots,v_k$ the signal received from each broadcasting vertex $v_i$ is added together to compute the reception for $u$.
Based on the regularity of the $(2,2)$ and $(3,3)$ broadcasts presented in Theorem \ref{thm:density22} and Theorem \ref{thm:density33}, respectively, we establish upper bounds for the $(2,2)$ and $(3,3)$ broadcast domination number of the finite graph $H_{m,n}$ for all $m,n\geq 1$, see Corollaries \ref{cor:22bound} and  \ref{cor:33bound}, respectively.

\begin{figure}
    \centering
    \begin{subfigure}[t]{0.45\textwidth}
        \centering
      \includegraphics[width=2.3in,  height=2.3in,keepaspectratio, trim=0in 0 0 1.2in, clip]{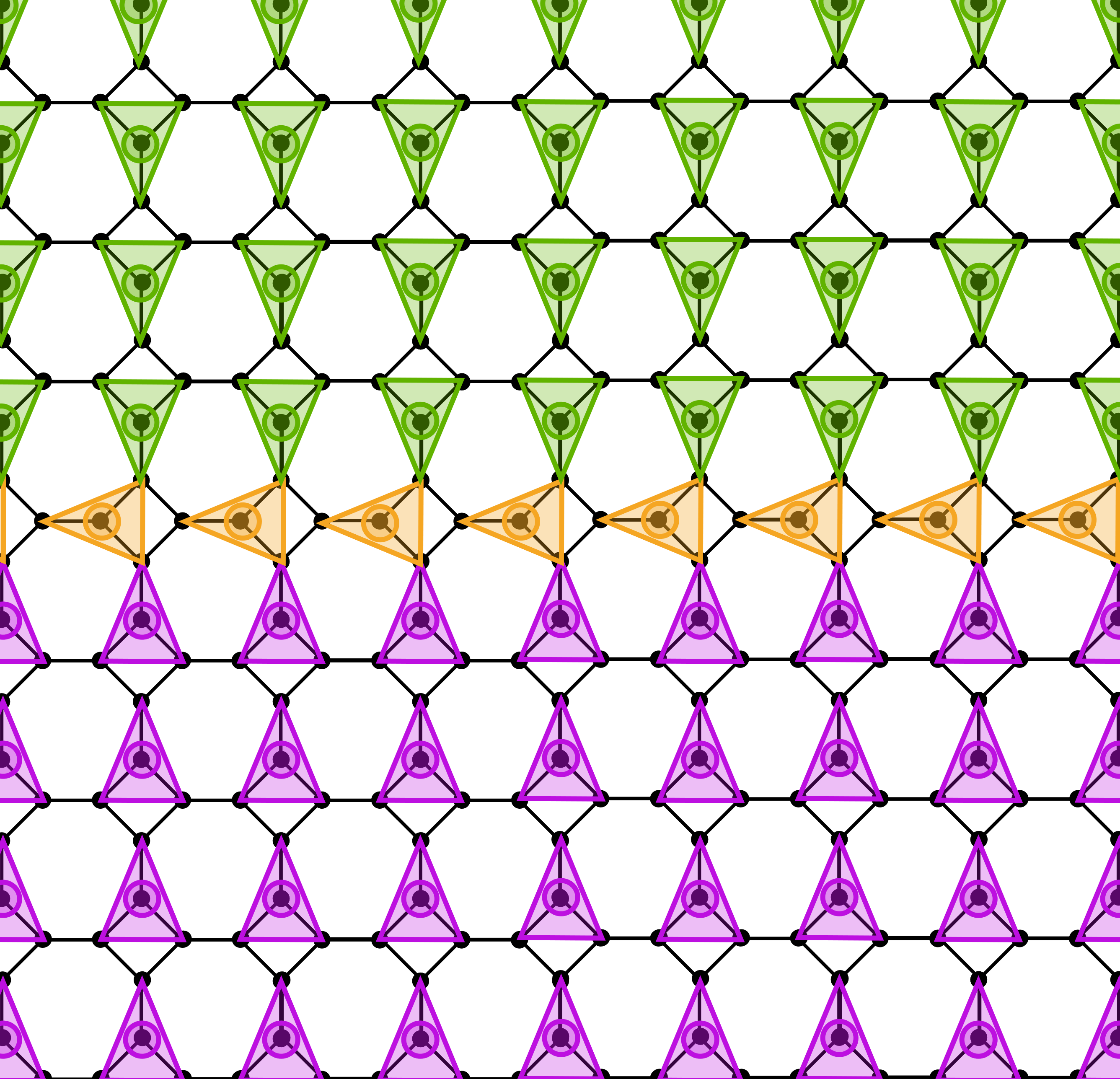}
    \caption{The $(2,1)$ broadcast on $H_{\infty,\infty}$ presented in Theorem~\ref{thm:density21} with density $\frac14$.}
    \label{fig:Delta21}
    \end{subfigure}
\hfill
    \begin{subfigure}[t]{0.45\textwidth}
        \centering
     \includegraphics[width=2.3in,  height=2.3in,keepaspectratio, trim=3.5in 0 0 0, clip]{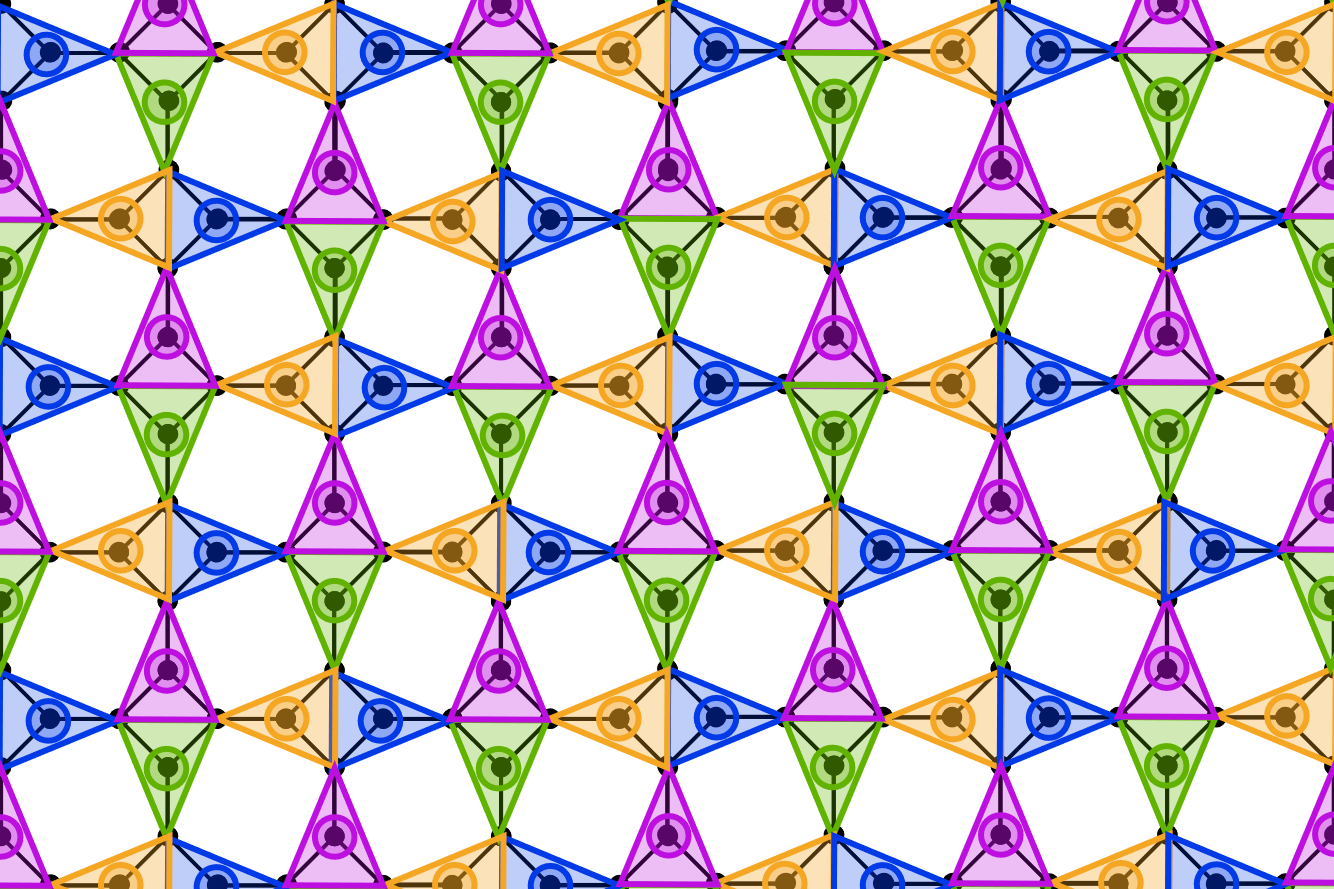}
    \caption{The $(2,2)$ broadcast on $H_{\infty,\infty}$ presented in Theorem~\ref{thm:density22} with density $\frac12$.}
    \label{fig:Delta22}
    \end{subfigure}
\\
    \begin{subfigure}[t]{0.45\textwidth}
        \centering
\includegraphics[width=2.3in, height=2.3in,keepaspectratio]{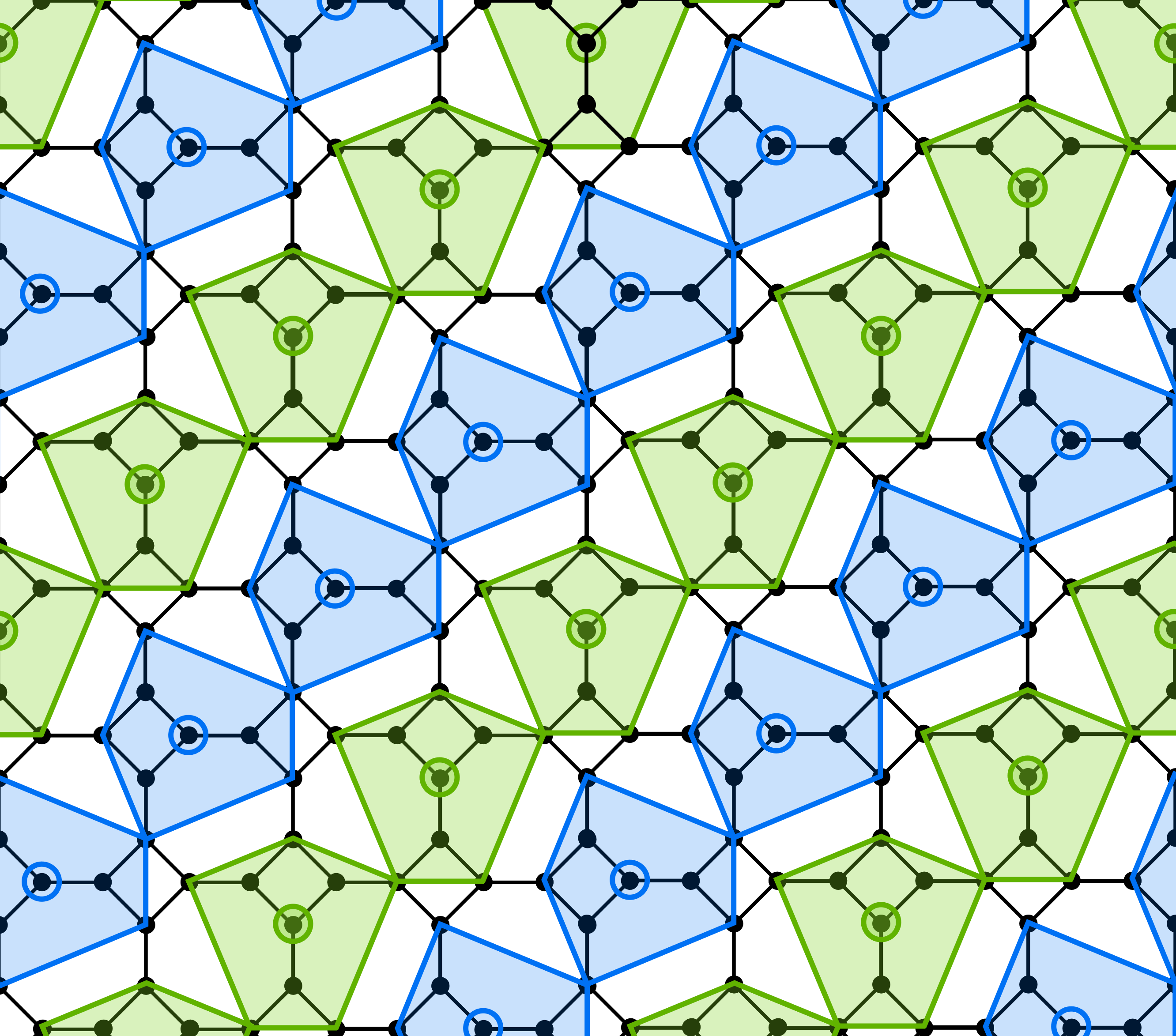}
    \caption{The $(3,1)$ broadcast on $H_{\infty,\infty}$ presented in Theorem~\ref{thm:density31} with density $\frac18$.}
    \label{fig:Delta31}
    \end{subfigure}
        \hfill
    \begin{subfigure}[t]{0.45\textwidth}
        \centering
        \includegraphics[width=2.3in,height=2.3in,keepaspectratio]{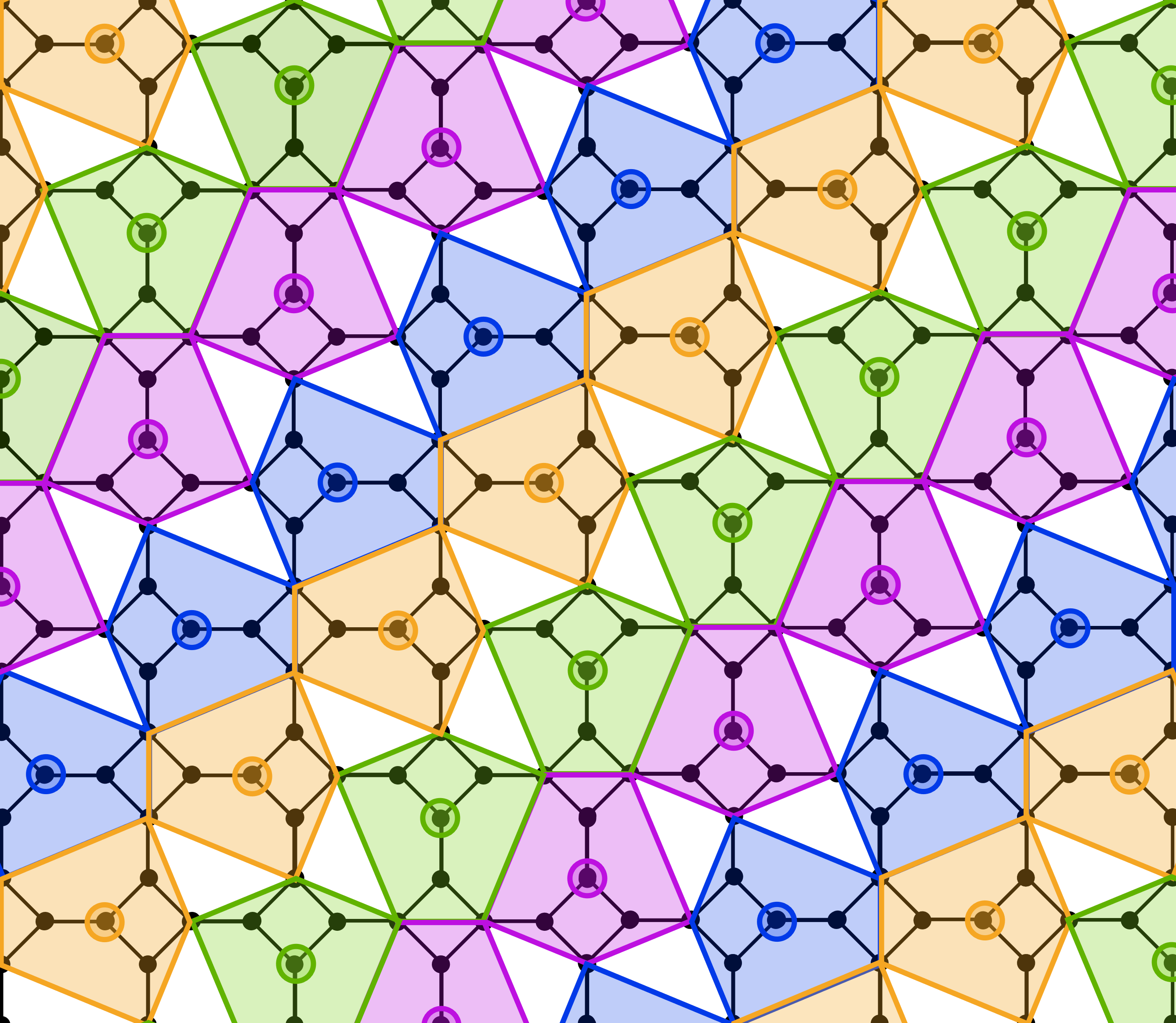}
    \caption{The $(3,2)$ broadcast on $H_{\infty,\infty}$ presented in Theorem~\ref{thm:density32} with density $\frac16$.}
    \label{fig:Delta32}
    \end{subfigure}
    \\
\begin{subfigure}[t]{0.45\textwidth}
        \centering
\includegraphics[width=2.3in,height=2.3in,keepaspectratio,trim=1.5in 0 0 0, clip]{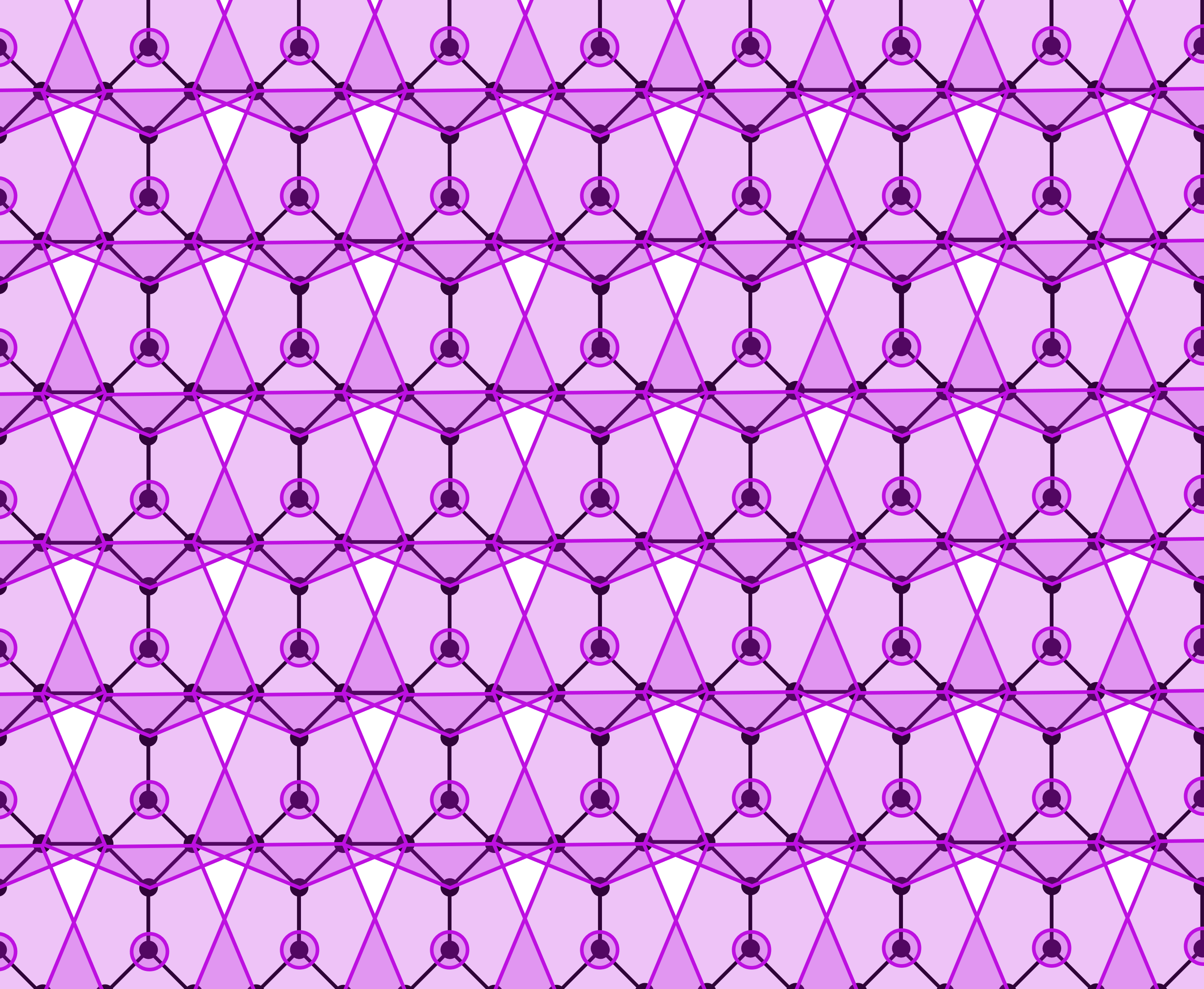}
    \caption{The $(3,3)$ broadcast on $H_{\infty,\infty}$ presented in Theorem~\ref{thm:density33} with density $\frac14$.}
    \label{fig:Delta33}
    
    \end{subfigure}
    \hfill
    \begin{subfigure}[t]{0.45\textwidth}
        \centering
    \includegraphics[width=2.3in,height=2.3in,keepaspectratio]{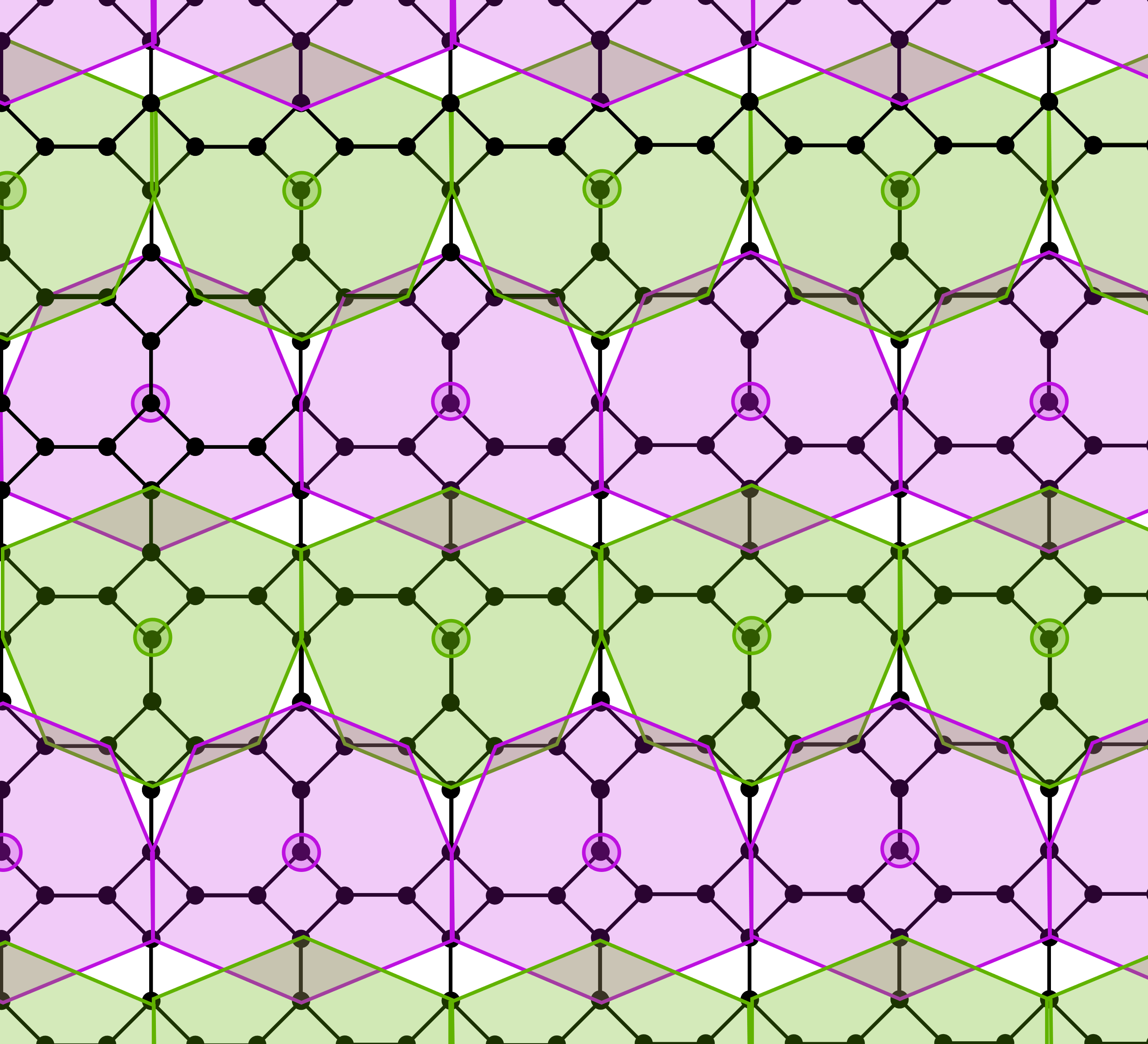}
    \caption{The $(4,1)$ broadcast on $H_{\infty,\infty}$ presented in Theorem~\ref{thm:density41} with density $\frac{1}{12}$.}
    \label{fig:Delta41}
    \end{subfigure}
    \caption{Broadcasts on $\gH$ for $(t,r)\in\{(2,2),(3,1),(3,2),(3,3),(4,1)\}$  along with the reach of broadcasting vertices.}\label{fig:densities}
\end{figure}

We conclude this introduction by remarking that this is the first paper describing $(t,r)$ broadcast dominating sets for the infinite truncated square tiling graph and $(t,r)$ broadcast domination numbers for the finite graphs $H_{m,n}$ with $m,n\geq 1$. 
Our work begins to answer the open problem posed by Harris, Insko, and Johnson in \cite[Research Project 13]{HIJ}, which was to study the $(t,r)$ broadcast domination number for this infinite graph. 
More work is needed to show that the broadcasts we present are optimal, and to establish optimal broadcasts in general. 
Thus, we provide a section with many open problems for further study, including conjectures for upper bounds on the density of $(4,r)$, with $2\leq r\leq 4$, broadcasts of $\gH$.

\begin{remark}
We have implemented code, which can be downloaded from GitHub repository \cite{cervantes2024github}. This code utilizes the Python package NetworkX \cite{hagberg2008networkx} to construct the graph $H_{m,n}$ by positioning the vertices and edges so that the illustrated graph is depicted as in Figure~\ref{fig:22a}. The code then allows us to test whether a given subset of vertices is a $(t,r)$ broadcast dominating set of $H_{m,n}$.
The user may also input the parameters $m,n$ (for small values) and a guess for the $(2,1)$ broadcast domination number. The program then tests all subsets of size one less than the input guess, to determine if there exists a dominating set of that smaller size.
\end{remark}
    
\section{Domination numbers and bounds for \texorpdfstring{$H_{m,n}$}{Hmn} graphs}

In this section we consider the graphs $H_{m,n}$ and provide bounds for the domination number of these graphs. Note that this is equivalent to giving bounds for the $(2,1)$ broadcast domination number. 
We henceforth use the notation $\gamma(H_{m,n})$ to denote $\gamma_{2,1}(H_{m,n})$. 

\begin{definition}
For positive integers $m$ and $n$, let $H_{m,n}$ denote a truncated square graph with $m$ rows consisting of $n$ octagons each. Let $V(H_{m,n})$ and $E(H_{m,n})$ denote the vertex and edge set of the graph $H_{m,n}$. We illustrate the graph $H_{m,n}$ in Figure~\ref{fig:22a}.
\end{definition}

Our first result establishes a count for the number of vertices of $H_{m,n}$.

\begin{lemma}\label{lem:m=1}
If $n\geq 1$, then $|V(H_{1,n})| = 6n+2$. 
\end{lemma}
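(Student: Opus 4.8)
The plan is to count vertices by understanding the local structure of a single row of $n$ octagons and how adjacent octagons in the row are glued together. I would begin with the base case $n=1$: a single octagon is a cycle graph on $8$ vertices (as stated in the introduction), giving $|V(H_{1,1})| = 8 = 6(1)+2$, which matches the formula. This anchors the count and clarifies the convention for what ``one octagon'' contributes.

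Next I would analyze the inductive step, passing from $H_{1,n}$ to $H_{1,n+1}$ by appending one octagon to the right end of the row. The key geometric observation is that in the truncated square tiling, horizontally adjacent octagons share exactly one edge (and hence two vertices) along the square that sits between them. So adding a new octagon introduces $8$ new vertices but identifies $2$ of them with vertices already present in $H_{1,n}$, for a net gain of $6$ vertices per additional octagon. I would verify this shared-edge count carefully against Figure~\ref{fig:22a}, since the exact number of identified vertices is precisely what drives the coefficient $6$ in the formula; miscounting the overlap (e.g. sharing a vertex rather than an edge, or sharing via an intervening square) is the main thing that could go wrong. Formally, this gives the recurrence $|V(H_{1,n+1})| = |V(H_{1,n})| + 6$.

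Solving the recurrence with base value $|V(H_{1,1})| = 8$ yields $|V(H_{1,n})| = 8 + 6(n-1) = 6n+2$, completing the induction. Alternatively, and perhaps more transparently for the reader, I would present a direct double-count: assign to each octagon its $8$ boundary vertices, observe that each of the $n-1$ shared edges between consecutive octagons is counted twice (contributing an overcount of $2$ vertices per shared edge), and subtract, giving $8n - 2(n-1) = 6n+2$. I expect the main obstacle to be purely expository rather than mathematical, namely pinning down the adjacency convention from the figure precisely enough that the ``shared edge contributes two doubly-counted vertices'' claim is unambiguous; once that local picture is fixed, the arithmetic is immediate.
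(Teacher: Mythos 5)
Your proof is correct and follows essentially the same route as the paper: induction on $n$, with base case $|V(H_{1,1})|=8$ and the key observation that an appended octagon shares exactly two vertices (one edge) with the previous column, giving a net gain of $6$ vertices per step. Your supplementary direct double-count ($8n-2(n-1)=6n+2$) is a harmless variant the paper does not include, but the core argument is identical.
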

\begin{proof}
We proceed by induction.
For $n=1$, it is evident that 
$    |V(H_{1,1})| = 8 = 6(1)+2$.
Assume $|V(H_{1,n})| = 6n+2$. We now show that $|V(H_{1,n+1})| = 6(n+1)+2$. 
The difference between $|V(H_{1,n})|$ and $|V(H_{1,n+1})|$ is $6$, since the additional octagon shares $2$ vertices with the octagon in the $n$th column.  
Then
$        |V(H_{1,n+1})| - |V(H_{1,n})| = 6$ and, induction hypothesis, we have that  
        $|V(H_{1,n+1})| - (6n+2) = 6 $. 
        Thus 
  $      |V(H_{1,n+1})| = 6 + 6n+2= 6(n+1) + 2$, as desired.
\end{proof}

\begin{theorem}\label{thm:enum_vertices}
If $m,n\geq 1$, then $|V(H_{m,n})|=2m+n(4m+2)$. 
\end{theorem}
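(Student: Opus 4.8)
The plan is to induct on the number of rows $m$, taking Lemma~\ref{lem:m=1} as the base case $m=1$, where $|V(H_{1,n})| = 6n+2 = 2(1)+n(4\cdot 1+2)$ already matches the claimed formula. For the inductive step I would assume $|V(H_{m,n})| = 2m+n(4m+2)$ and count the vertices gained when a new row of $n$ octagons is appended below the existing $m$ rows to form $H_{m+1,n}$. Since the new row is itself a copy of $H_{1,n}$, it contributes at most $6n+2$ vertices, so the whole content of the step is to show that exactly $2n$ of these coincide with vertices already present, giving $|V(H_{m+1,n})|-|V(H_{m,n})| = (6n+2)-2n = 4n+2$. The arithmetic $2m+n(4m+2)+(4n+2) = 2(m+1)+n(4(m+1)+2)$ then closes the induction.

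To pin down the overlap I would record a geometric feature of the tiling: within a single row, consecutive octagons share only the two vertices of a common vertical edge, so the two \emph{top} vertices of each octagon are never shared with its horizontal neighbors. Hence the bottom boundary of a row consists of $2n$ distinct vertices, two per octagon. When the new row is attached, each of its octagons sits directly beneath one octagon of the previous bottom row and shares exactly the two vertices of their common horizontal edge; these are precisely the $2n$ bottom-boundary vertices of the old row, and since the new row lies entirely below the old structure it meets it in no other vertices. This gives the required overlap of $2n$.

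An alternative I would keep in reserve is a direct incidence count that sidesteps the inductive bookkeeping. In the truncated square tiling every vertex is incident to exactly two octagons and one square, so no vertex belongs to three or more octagons and the squares contribute no vertices beyond those of the octagons. Counting incidences gives $8mn$ (eight per octagon across the $mn$ octagons), in which each shared vertex is counted twice. Since the octagons form an $m\times n$ grid with $m(n-1)$ horizontal and $n(m-1)$ vertical edge-adjacencies, and each adjacency accounts for two shared vertices lying on no other octagon–octagon edge, the number of doubly counted vertices is $2[m(n-1)+n(m-1)]$. Subtracting yields $|V(H_{m,n})| = 8mn - 2[m(n-1)+n(m-1)] = 4mn+2m+2n = 2m+n(4m+2)$.

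The main obstacle in either route is the same structural fact: that two distinct octagons meet in at most an edge (two vertices) and that no vertex lies in three or more octagons. This is exactly where the explicit geometry of the truncated square tiling must be invoked, either through the vertex figure (square–octagon–octagon) in the direct count or through the top/bottom-edge sharing description in the induction. Once that adjacency pattern is fixed, everything else is routine arithmetic.
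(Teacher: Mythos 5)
Your proposal is correct, and its main argument is essentially the paper's induction transposed: the paper fixes $m$ and inducts on the number of columns $n$, counting an increment of $6+4(m-1)=4m+2$ vertices per new column (with base case $n=1$ obtained by applying Lemma~\ref{lem:m=1} to the single column $H_{m,1}$, implicitly via the row--column symmetry of the graph), whereas you fix $n$ and induct on the rows, counting the increment $(6n+2)-2n=4n+2$ per new row. These are the same bookkeeping in the two coordinate directions; if anything, your version is slightly cleaner at the base case, since taking $m=1$ uses Lemma~\ref{lem:m=1} verbatim rather than through the unremarked symmetry $H_{m,1}\cong H_{1,m}$, and your explicit justification that the old row's bottom boundary consists of $2n$ distinct vertices makes the overlap count more airtight than the paper's appeal to its figure. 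Your reserve argument is genuinely different and worth noting: the double count $8mn-2\bigl[m(n-1)+n(m-1)\bigr]=4mn+2m+2n=2m+n(4m+2)$ avoids induction entirely, at the cost of having to invoke the structural facts that edge-adjacent octagons share exactly two vertices, diagonally adjacent octagons share none, and no vertex lies on three octagons --- facts guaranteed by the $4.8.8$ vertex configuration of the tiling, and which correctly absorb the boundary vertices (counted once and never subtracted). The inductive route localizes all geometry to a single row attachment, while the incidence count buys a one-line formula valid for all $m,n$ simultaneously; both close correctly.
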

\begin{proof}
    We prove this result by induction. 
    Fix an arbitrary $m \geq 1$. Then, for $n=1$, by Lemma~\ref{lem:m=1} we have that 
    $        |V(H_{m,1})| = 6m+2 = 2m + 1(4m + 2)
     $.
    Now, assume $|V(H_{m,n})| = 2m+n(4m+2)$ holds for $n\geq 1$.
    Notice in Figure~\ref{fig:hmn+1} that adding a column of octagons results in the addition of $6$ vertices (marked in red) for the first octagon, and another $4$ vertices (marked in blue) for each of the additional $m-1$ rows.
    Hence, by induction hypothesis, we have that 
$|V(H_{m,n+1})| = |V(H_{m,n})| + 6 + 4(m-1)= 2m+n(4m+2) + 6 + 4(m-1)= 2m + (n+1)(4m+2)$, 
which completes the proof.
\end{proof}
\begin{figure}[htp]
        \centering
    \includegraphics[width=3in]{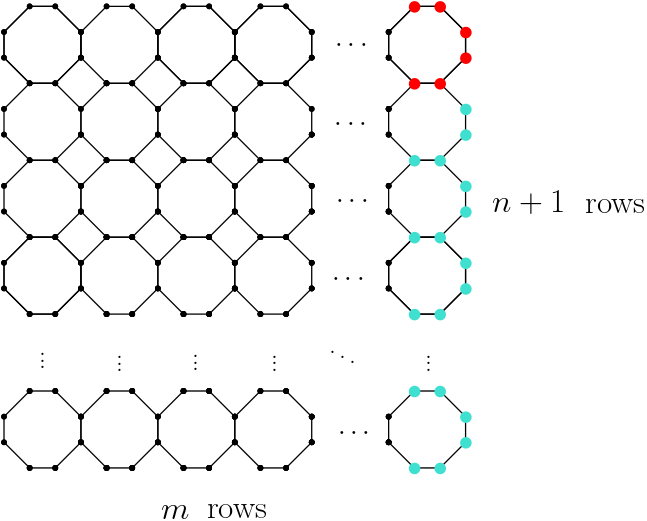}
        \caption{Adding one additional column of $n+1$ octagons to $H_{m,n}$ and create $H_{m,n+1}$.}
        \label{fig:hmn+1}
    \end{figure}

For small $m,n$ values (particularly when $mn \leq 4$), upper bounds for $\gamma(H_{m,n})$ are feasible to construct and confirm computationally. We provide such a result next.

\begin{theorem}\label{thm:dom_1,n}
    If $n \leq 4$, then
    \[ \gamma(H_{1,n}) =\begin{cases} 
      3 & \text{if } n = 1 \\
      5 & \text{if } n = 2 \\
      7 & \text{if } n = 3 \\
      9 & \text{if } n = 4.
   \end{cases}
\]

\end{theorem}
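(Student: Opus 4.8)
The plan is to prove this by establishing matching upper and lower bounds for each value $n\in\{1,2,3,4\}$. The upper bounds are the easy direction: for each $n$ I would exhibit an explicit dominating set of the claimed size (these are presumably the sets illustrated in Figure~\ref{fig:dom_m=1}) and verify by direct inspection that every vertex of $H_{1,n}$ is either in the set or adjacent to a vertex in the set. Since $H_{1,n}$ has $6n+2$ vertices by Lemma~\ref{lem:m=1}, and each vertex of an octagonal tiling has degree at most $3$, a single broadcasting vertex dominates at most $4$ vertices (itself plus three neighbors), giving the crude bound $\gamma(H_{1,n})\geq\lceil(6n+2)/4\rceil$. This yields $\lceil 8/4\rceil=2$, $\lceil 14/4\rceil=4$, $\lceil 20/4\rceil=5$, $\lceil 26/4\rceil=7$ for $n=1,2,3,4$, which is \emph{not} tight in any case, so the degree bound alone will not suffice and the real work is sharpening the lower bounds.

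For the lower bounds I would exploit the specific structure of $H_{1,n}$. The key structural fact is that most vertices have degree $2$ (the vertices along the top and bottom boundaries of the row, and the ``corner'' vertices of each octagon), while only the vertices shared between horizontally adjacent octagons have degree $3$. A degree-$2$ vertex dominates only $3$ vertices, so domination is genuinely constrained along the boundary. The cleanest approach is a \emph{discharging} or \emph{partition} argument: partition $V(H_{1,n})$ into regions (for instance, group the vertices octagon-by-octagon, or separate the vertices into a top path and a bottom path) so that any dominating set must place a minimum number of vertices to cover each region, and sum these local requirements. Alternatively, since the values $3,5,7,9$ increase by exactly $2$ as $n$ increases by $1$, I expect an inductive lower bound $\gamma(H_{1,n})\geq\gamma(H_{1,n-1})+2$ to be provable by showing that any dominating set of $H_{1,n}$, when restricted to the rightmost octagon's ``private'' vertices, must spend at least $2$ broadcasting vertices there beyond what $H_{1,n-1}$ requires.

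Concretely, I would first handle the base cases $n=1,2$ by hand: for $n=1$ (a single $8$-cycle $C_8$) the domination number of $C_8$ is the classical value $\lceil 8/3\rceil=3$, matching the claim; for $n=2$ I would check all placements or invoke the computational verification mentioned in the remark. Then for the inductive step I would argue that the six new vertices introduced by each additional column (as identified in the proof of Theorem~\ref{thm:enum_vertices}) cannot all be dominated by broadcasting vertices lying in the previous columns, forcing at least two additional broadcasting vertices. The main obstacle will be making this ``boundary effect'' rigorous: a dominating vertex placed near the junction between octagons can cover vertices in two octagons simultaneously, so I must carefully track which vertices can be shared and prove that the shared coverage is insufficient to beat the $+2$ rate. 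I expect this case analysis at the octagon junctions to be the delicate part, and given the small range $n\leq 4$, it is likely cleanest to combine a uniform lower-bound argument for the general pattern with the explicit computational confirmation referenced in the paper's GitHub implementation for the finitely many cases at hand.
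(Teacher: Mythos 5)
Your proposal is sound, and its operative content coincides with the paper's actual proof, which is shorter than you might expect: the paper establishes the upper bounds via the explicit dominating sets of Figure~\ref{fig:dom_m=1} and obtains the matching lower bounds \emph{purely computationally}, by exhaustively checking with NetworkX that no subset of size one less dominates $H_{1,n}$ for $n\leq 4$ (and similarly for $H_{2,2}$ in Theorem~\ref{thm:dom_22}). Since you ultimately fall back on exactly this computational verification for the finitely many cases, your proof goes through. Where you genuinely depart from the paper is the sketched structural lower bound, and it is worth recording that this route is viable but that you left the hard step unproven. Your base case $\gamma(H_{1,1})=\gamma(C_8)=\lceil 8/3\rceil=3$ is rigorous and is not in the paper. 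The inductive claim $\gamma(H_{1,n})\geq\gamma(H_{1,n-1})+2$ is where the work lies, because a dominating set $D$ of $H_{1,n}$ restricted to the first $n-1$ columns need not dominate $H_{1,n-1}$: the two shared vertices of the last junction may be dominated from inside the new octagon. The step can be completed as follows. Label the six private vertices of the last octagon $p_1,\dots,p_6$ along the path joining the two shared vertices; the closed neighborhoods of $p_2$ and $p_5$ contain only private vertices and are disjoint, forcing $|D\cap\{p_1,\dots,p_6\}|\geq 2$. An exchange argument then recovers a dominating set of $H_{1,n-1}$ of size at most $|D|-2$: delete all $k$ private broadcasters, and if $k\geq 3$ and a shared vertex loses its dominator, add the top shared vertex, which dominates both shared vertices since they are adjacent along the shared edge; a short enumeration of the valid two-private configurations shows that when $k=2$ no addition is needed. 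Carried out, this inductive argument combined with Theorem~\ref{thm:better_upper_bd} would yield $\gamma(H_{1,n})=2n+1$ for all $n\geq 1$, strictly stronger than the paper's computer-verified statement for $n\leq 4$; as written, however, that portion of your proposal remains a sketch, and your proof stands only because of the computational fallback.
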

\begin{proof}
In Figures~\ref{fig:h11},~\ref{fig:h12},~\ref{fig:h13}, and~\ref{fig:h14} we provide illustrations showing that these graphs are dominated by the selected vertices. This  establishes that the claimed values are upper bounds for the domination numbers.     
Using the Python package NetworkX \cite{hagberg2008networkx} (see the accompanying supporting code in GitHub \cite{cervantes2024github}) we checked all possible subsets of vertices with cardinality one less than the constructed upper bound and confirmed that none of these subsets dominate $H_{1,n}$, for the values of $n$ considered. In doing so, we have established that these upper bounds are also lower bounds for $\gamma(H_{1,n})$ and thereby we have given exact domination numbers for these graphs as claimed.
\end{proof}

\begin{figure}
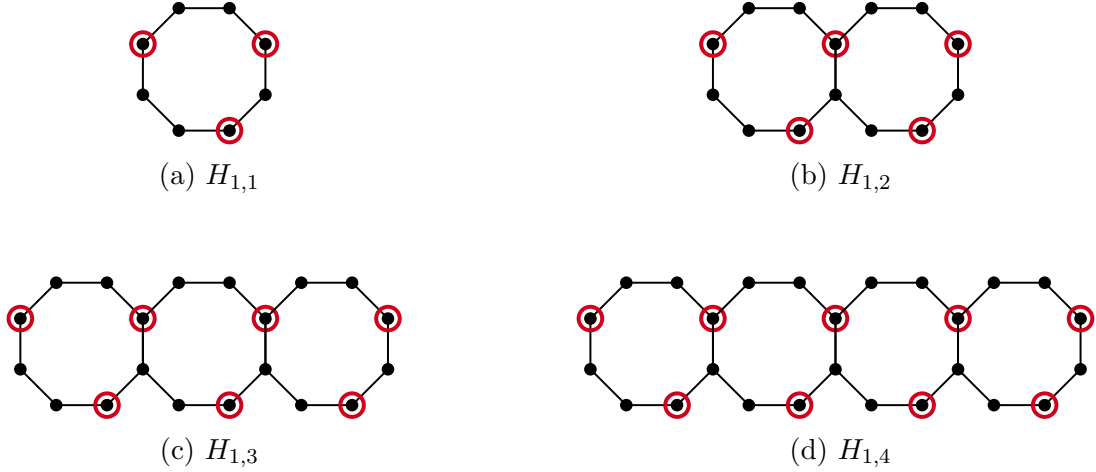

     \centering
     \begin{subfigure}[b]{0.45\textwidth}
         \centering
         \tikzset{every picture/.style={line width=0.75pt}} %set default line width to 0.75pt        

\begin{tikzpicture}[x=0.75pt,y=0.75pt,yscale=-.6,xscale=.6]
%uncomment if require: \path (0,300); %set diagram left start at 0, and has height of 300

%Shape: Regular Polygon [id:dp23975975494886748] 
\draw   (225.65,114.7) -- (195.48,144.86) -- (152.83,144.86) -- (122.66,114.7) -- (122.66,72.04) -- (152.83,41.88) -- (195.48,41.88) -- (225.65,72.04) -- cycle ;
%Shape: Circle [id:dp11761017406636487] 
\draw  [fill={rgb, 255:red, 0; green, 0; blue, 0 }  ,fill opacity=1 ] (118.21,72.04) .. controls (118.21,69.58) and (120.21,67.59) .. (122.66,67.59) .. controls (125.12,67.59) and (127.11,69.58) .. (127.11,72.04) .. controls (127.11,74.5) and (125.12,76.49) .. (122.66,76.49) .. controls (120.21,76.49) and (118.21,74.5) .. (118.21,72.04) -- cycle ;
%Shape: Circle [id:dp9779656392454362] 
\draw  [fill={rgb, 255:red, 0; green, 0; blue, 0 }  ,fill opacity=1 ] (148.38,41.88) .. controls (148.38,39.42) and (150.37,37.43) .. (152.83,37.43) .. controls (155.28,37.43) and (157.28,39.42) .. (157.28,41.88) .. controls (157.28,44.33) and (155.28,46.33) .. (152.83,46.33) .. controls (150.37,46.33) and (148.38,44.33) .. (148.38,41.88) -- cycle ;
%Shape: Circle [id:dp2620722135306457] 
\draw  [fill={rgb, 255:red, 0; green, 0; blue, 0 }  ,fill opacity=1 ] (191.03,41.88) .. controls (191.03,39.42) and (193.03,37.43) .. (195.48,37.43) .. controls (197.94,37.43) and (199.93,39.42) .. (199.93,41.88) .. controls (199.93,44.33) and (197.94,46.33) .. (195.48,46.33) .. controls (193.03,46.33) and (191.03,44.33) .. (191.03,41.88) -- cycle ;
%Shape: Circle [id:dp42639029372559467] 
\draw  [fill={rgb, 255:red, 0; green, 0; blue, 0 }  ,fill opacity=1 ] (148.38,144.86) .. controls (148.38,142.4) and (150.37,140.41) .. (152.83,140.41) .. controls (155.28,140.41) and (157.28,142.4) .. (157.28,144.86) .. controls (157.28,147.32) and (155.28,149.31) .. (152.83,149.31) .. controls (150.37,149.31) and (148.38,147.32) .. (148.38,144.86) -- cycle ;
%Shape: Circle [id:dp21885088998736768] 
\draw  [fill={rgb, 255:red, 0; green, 0; blue, 0 }  ,fill opacity=1 ] (118.21,114.7) .. controls (118.21,112.24) and (120.21,110.25) .. (122.66,110.25) .. controls (125.12,110.25) and (127.11,112.24) .. (127.11,114.7) .. controls (127.11,117.15) and (125.12,119.15) .. (122.66,119.15) .. controls (120.21,119.15) and (118.21,117.15) .. (118.21,114.7) -- cycle ;
%Shape: Circle [id:dp7196190379300513] 
\draw  [fill={rgb, 255:red, 0; green, 0; blue, 0 }  ,fill opacity=1 ] (191.03,144.86) .. controls (191.03,142.4) and (193.03,140.41) .. (195.48,140.41) .. controls (197.94,140.41) and (199.93,142.4) .. (199.93,144.86) .. controls (199.93,147.32) and (197.94,149.31) .. (195.48,149.31) .. controls (193.03,149.31) and (191.03,147.32) .. (191.03,144.86) -- cycle ;
%Shape: Circle [id:dp617505296726992] 
\draw  [fill={rgb, 255:red, 0; green, 0; blue, 0 }  ,fill opacity=1 ] (221.2,114.7) .. controls (221.2,112.24) and (223.19,110.25) .. (225.65,110.25) .. controls (228.1,110.25) and (230.1,112.24) .. (230.1,114.7) .. controls (230.1,117.15) and (228.1,119.15) .. (225.65,119.15) .. controls (223.19,119.15) and (221.2,117.15) .. (221.2,114.7) -- cycle ;
%Shape: Circle [id:dp8013240969169696] 
\draw  [fill={rgb, 255:red, 0; green, 0; blue, 0 }  ,fill opacity=1 ] (221.2,72.04) .. controls (221.2,69.58) and (223.19,67.59) .. (225.65,67.59) .. controls (228.1,67.59) and (230.1,69.58) .. (230.1,72.04) .. controls (230.1,74.5) and (228.1,76.49) .. (225.65,76.49) .. controls (223.19,76.49) and (221.2,74.5) .. (221.2,72.04) -- cycle ;
%Shape: Circle [id:dp3482988392650931] 
\draw  [color={rgb, 255:red, 208; green, 2; blue, 27 }  ,draw opacity=1 ][line width=1.5]  (112.76,72.04) .. controls (112.76,66.57) and (117.2,62.14) .. (122.66,62.14) .. controls (128.13,62.14) and (132.56,66.57) .. (132.56,72.04) .. controls (132.56,77.51) and (128.13,81.94) .. (122.66,81.94) .. controls (117.2,81.94) and (112.76,77.51) .. (112.76,72.04) -- cycle ;
%Shape: Circle [id:dp052152663926080645] 
\draw  [color={rgb, 255:red, 208; green, 2; blue, 27 }  ,draw opacity=1 ][line width=1.5]  (215.75,72.04) .. controls (215.75,66.57) and (220.18,62.14) .. (225.65,62.14) .. controls (231.11,62.14) and (235.55,66.57) .. (235.55,72.04) .. controls (235.55,77.51) and (231.11,81.94) .. (225.65,81.94) .. controls (220.18,81.94) and (215.75,77.51) .. (215.75,72.04) -- cycle ;
%Shape: Circle [id:dp2683454888264245] 
\draw  [color={rgb, 255:red, 208; green, 2; blue, 27 }  ,draw opacity=1 ][line width=1.5]  (185.58,144.86) .. controls (185.58,139.39) and (190.02,134.96) .. (195.48,134.96) .. controls (200.95,134.96) and (205.38,139.39) .. (205.38,144.86) .. controls (205.38,150.33) and (200.95,154.76) .. (195.48,154.76) .. controls (190.02,154.76) and (185.58,150.33) .. (185.58,144.86) -- cycle ;

\end{tikzpicture}
         \caption{$H_{1,1}$}
         \label{fig:h11}
     \end{subfigure}
\qquad
     \begin{subfigure}[b]{0.45\textwidth}
         \centering
         \tikzset{every picture/.style={line width=0.75pt}} %set default line width to 0.75pt        

\begin{tikzpicture}[x=0.75pt,y=0.75pt,yscale=-.6,xscale=.6]
%uncomment if require: \path (0,300); %set diagram left start at 0, and has height of 300

%Shape: Regular Polygon [id:dp23975975494886748] 
\draw   (150.65,144.7) -- (120.48,174.86) -- (77.83,174.86) -- (47.66,144.7) -- (47.66,102.04) -- (77.83,71.88) -- (120.48,71.88) -- (150.65,102.04) -- cycle ;
%Shape: Circle [id:dp11761017406636487] 
\draw  [fill={rgb, 255:red, 0; green, 0; blue, 0 }  ,fill opacity=1 ] (43.21,102.04) .. controls (43.21,99.58) and (45.21,97.59) .. (47.66,97.59) .. controls (50.12,97.59) and (52.11,99.58) .. (52.11,102.04) .. controls (52.11,104.5) and (50.12,106.49) .. (47.66,106.49) .. controls (45.21,106.49) and (43.21,104.5) .. (43.21,102.04) -- cycle ;
%Shape: Circle [id:dp9779656392454362] 
\draw  [fill={rgb, 255:red, 0; green, 0; blue, 0 }  ,fill opacity=1 ] (73.38,71.88) .. controls (73.38,69.42) and (75.37,67.43) .. (77.83,67.43) .. controls (80.28,67.43) and (82.28,69.42) .. (82.28,71.88) .. controls (82.28,74.33) and (80.28,76.33) .. (77.83,76.33) .. controls (75.37,76.33) and (73.38,74.33) .. (73.38,71.88) -- cycle ;
%Shape: Circle [id:dp2620722135306457] 
\draw  [fill={rgb, 255:red, 0; green, 0; blue, 0 }  ,fill opacity=1 ] (116.03,71.88) .. controls (116.03,69.42) and (118.03,67.43) .. (120.48,67.43) .. controls (122.94,67.43) and (124.93,69.42) .. (124.93,71.88) .. controls (124.93,74.33) and (122.94,76.33) .. (120.48,76.33) .. controls (118.03,76.33) and (116.03,74.33) .. (116.03,71.88) -- cycle ;
%Shape: Circle [id:dp42639029372559467] 
\draw  [fill={rgb, 255:red, 0; green, 0; blue, 0 }  ,fill opacity=1 ] (73.38,174.86) .. controls (73.38,172.4) and (75.37,170.41) .. (77.83,170.41) .. controls (80.28,170.41) and (82.28,172.4) .. (82.28,174.86) .. controls (82.28,177.32) and (80.28,179.31) .. (77.83,179.31) .. controls (75.37,179.31) and (73.38,177.32) .. (73.38,174.86) -- cycle ;
%Shape: Circle [id:dp21885088998736768] 
\draw  [fill={rgb, 255:red, 0; green, 0; blue, 0 }  ,fill opacity=1 ] (43.21,144.7) .. controls (43.21,142.24) and (45.21,140.25) .. (47.66,140.25) .. controls (50.12,140.25) and (52.11,142.24) .. (52.11,144.7) .. controls (52.11,147.15) and (50.12,149.15) .. (47.66,149.15) .. controls (45.21,149.15) and (43.21,147.15) .. (43.21,144.7) -- cycle ;
%Shape: Circle [id:dp7196190379300513] 
\draw  [fill={rgb, 255:red, 0; green, 0; blue, 0 }  ,fill opacity=1 ] (116.03,174.86) .. controls (116.03,172.4) and (118.03,170.41) .. (120.48,170.41) .. controls (122.94,170.41) and (124.93,172.4) .. (124.93,174.86) .. controls (124.93,177.32) and (122.94,179.31) .. (120.48,179.31) .. controls (118.03,179.31) and (116.03,177.32) .. (116.03,174.86) -- cycle ;
%Shape: Circle [id:dp617505296726992] 
\draw  [fill={rgb, 255:red, 0; green, 0; blue, 0 }  ,fill opacity=1 ] (146.2,144.7) .. controls (146.2,142.24) and (148.19,140.25) .. (150.65,140.25) .. controls (153.1,140.25) and (155.1,142.24) .. (155.1,144.7) .. controls (155.1,147.15) and (153.1,149.15) .. (150.65,149.15) .. controls (148.19,149.15) and (146.2,147.15) .. (146.2,144.7) -- cycle ;
%Shape: Circle [id:dp8013240969169696] 
\draw  [fill={rgb, 255:red, 0; green, 0; blue, 0 }  ,fill opacity=1 ] (146.2,102.04) .. controls (146.2,99.58) and (148.19,97.59) .. (150.65,97.59) .. controls (153.1,97.59) and (155.1,99.58) .. (155.1,102.04) .. controls (155.1,104.5) and (153.1,106.49) .. (150.65,106.49) .. controls (148.19,106.49) and (146.2,104.5) .. (146.2,102.04) -- cycle ;
%Shape: Circle [id:dp3482988392650931] 
\draw  [color={rgb, 255:red, 208; green, 2; blue, 27 }  ,draw opacity=1 ][line width=1.5]  (37.76,102.04) .. controls (37.76,96.57) and (42.2,92.14) .. (47.66,92.14) .. controls (53.13,92.14) and (57.56,96.57) .. (57.56,102.04) .. controls (57.56,107.51) and (53.13,111.94) .. (47.66,111.94) .. controls (42.2,111.94) and (37.76,107.51) .. (37.76,102.04) -- cycle ;
%Shape: Circle [id:dp052152663926080645] 
\draw  [color={rgb, 255:red, 208; green, 2; blue, 27 }  ,draw opacity=1 ][line width=1.5]  (140.75,102.04) .. controls (140.75,96.57) and (145.18,92.14) .. (150.65,92.14) .. controls (156.11,92.14) and (160.55,96.57) .. (160.55,102.04) .. controls (160.55,107.51) and (156.11,111.94) .. (150.65,111.94) .. controls (145.18,111.94) and (140.75,107.51) .. (140.75,102.04) -- cycle ;
%Shape: Circle [id:dp2683454888264245] 
\draw  [color={rgb, 255:red, 208; green, 2; blue, 27 }  ,draw opacity=1 ][line width=1.5]  (110.58,174.86) .. controls (110.58,169.39) and (115.02,164.96) .. (120.48,164.96) .. controls (125.95,164.96) and (130.38,169.39) .. (130.38,174.86) .. controls (130.38,180.33) and (125.95,184.76) .. (120.48,184.76) .. controls (115.02,184.76) and (110.58,180.33) .. (110.58,174.86) -- cycle ;
%Shape: Regular Polygon [id:dp887599755661273] 
\draw   (253.65,144.7) -- (223.48,174.86) -- (180.83,174.86) -- (150.66,144.7) -- (150.66,102.04) -- (180.83,71.88) -- (223.48,71.88) -- (253.65,102.04) -- cycle ;
%Shape: Circle [id:dp20702660397933614] 
\draw  [fill={rgb, 255:red, 0; green, 0; blue, 0 }  ,fill opacity=1 ] (176.38,71.88) .. controls (176.38,69.42) and (178.37,67.43) .. (180.83,67.43) .. controls (183.28,67.43) and (185.28,69.42) .. (185.28,71.88) .. controls (185.28,74.33) and (183.28,76.33) .. (180.83,76.33) .. controls (178.37,76.33) and (176.38,74.33) .. (176.38,71.88) -- cycle ;
%Shape: Circle [id:dp27283332462681864] 
\draw  [fill={rgb, 255:red, 0; green, 0; blue, 0 }  ,fill opacity=1 ] (219.03,71.88) .. controls (219.03,69.42) and (221.03,67.43) .. (223.48,67.43) .. controls (225.94,67.43) and (227.93,69.42) .. (227.93,71.88) .. controls (227.93,74.33) and (225.94,76.33) .. (223.48,76.33) .. controls (221.03,76.33) and (219.03,74.33) .. (219.03,71.88) -- cycle ;
%Shape: Circle [id:dp11455648298227561] 
\draw  [fill={rgb, 255:red, 0; green, 0; blue, 0 }  ,fill opacity=1 ] (176.38,174.86) .. controls (176.38,172.4) and (178.37,170.41) .. (180.83,170.41) .. controls (183.28,170.41) and (185.28,172.4) .. (185.28,174.86) .. controls (185.28,177.32) and (183.28,179.31) .. (180.83,179.31) .. controls (178.37,179.31) and (176.38,177.32) .. (176.38,174.86) -- cycle ;
%Shape: Circle [id:dp8536624921732355] 
\draw  [fill={rgb, 255:red, 0; green, 0; blue, 0 }  ,fill opacity=1 ] (219.03,174.86) .. controls (219.03,172.4) and (221.03,170.41) .. (223.48,170.41) .. controls (225.94,170.41) and (227.93,172.4) .. (227.93,174.86) .. controls (227.93,177.32) and (225.94,179.31) .. (223.48,179.31) .. controls (221.03,179.31) and (219.03,177.32) .. (219.03,174.86) -- cycle ;
%Shape: Circle [id:dp07971455068400357] 
\draw  [fill={rgb, 255:red, 0; green, 0; blue, 0 }  ,fill opacity=1 ] (249.2,144.7) .. controls (249.2,142.24) and (251.19,140.25) .. (253.65,140.25) .. controls (256.1,140.25) and (258.1,142.24) .. (258.1,144.7) .. controls (258.1,147.15) and (256.1,149.15) .. (253.65,149.15) .. controls (251.19,149.15) and (249.2,147.15) .. (249.2,144.7) -- cycle ;
%Shape: Circle [id:dp18609421540781912] 
\draw  [fill={rgb, 255:red, 0; green, 0; blue, 0 }  ,fill opacity=1 ] (249.2,102.04) .. controls (249.2,99.58) and (251.19,97.59) .. (253.65,97.59) .. controls (256.1,97.59) and (258.1,99.58) .. (258.1,102.04) .. controls (258.1,104.5) and (256.1,106.49) .. (253.65,106.49) .. controls (251.19,106.49) and (249.2,104.5) .. (249.2,102.04) -- cycle ;
%Shape: Circle [id:dp9035424830258048] 
\draw  [color={rgb, 255:red, 208; green, 2; blue, 27 }  ,draw opacity=1 ][line width=1.5]  (243.75,102.04) .. controls (243.75,96.57) and (248.18,92.14) .. (253.65,92.14) .. controls (259.11,92.14) and (263.55,96.57) .. (263.55,102.04) .. controls (263.55,107.51) and (259.11,111.94) .. (253.65,111.94) .. controls (248.18,111.94) and (243.75,107.51) .. (243.75,102.04) -- cycle ;
%Shape: Circle [id:dp52514565121098] 
\draw  [color={rgb, 255:red, 208; green, 2; blue, 27 }  ,draw opacity=1 ][line width=1.5]  (213.58,174.86) .. controls (213.58,169.39) and (218.02,164.96) .. (223.48,164.96) .. controls (228.95,164.96) and (233.38,169.39) .. (233.38,174.86) .. controls (233.38,180.33) and (228.95,184.76) .. (223.48,184.76) .. controls (218.02,184.76) and (213.58,180.33) .. (213.58,174.86) -- cycle ;

\end{tikzpicture}
         \caption{$H_{1,2}$}
         \label{fig:h12}
     \end{subfigure}
\\
     \vspace{30pt}
     \begin{subfigure}[b]{0.45\textwidth}
         \centering
         \input{h_1_3_dom}
         \caption{$H_{1,3}$}
         \label{fig:h13}
     \end{subfigure}
\qquad
     \begin{subfigure}[b]{0.45\textwidth}
         \centering
         \input{h_1_4_dom}
         \caption{$H_{1,4}$}
         \label{fig:h14}
     \end{subfigure}
        \caption{The circled vertices form $(2,1)$ broadcasts of $H_{1,n}$. for $1\leq n\leq 4$.}
        \label{fig:dom_m=1}
\end{figure}

\begin{theorem}\label{thm:dom_22}
    The domination number of $H_{2,2}$ satisfies $\gamma(H_{2,2}) = 8$.
\end{theorem}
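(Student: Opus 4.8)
The plan is to mirror the two-sided strategy used for Theorem~\ref{thm:dom_1,n}: produce an explicit dominating set of size $8$ to obtain the upper bound $\gamma(H_{2,2})\le 8$, and then rule out every dominating set of size $7$ to obtain the matching lower bound $\gamma(H_{2,2})\ge 8$.

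For the upper bound, I would exhibit a concrete set $D\subseteq V(H_{2,2})$ with $|D|=8$ (displayed in Figure~\ref{fig:h_2_2_dom}) and verify that the closed neighborhoods of its vertices cover all $24$ vertices. By Theorem~\ref{thm:enum_vertices} we have $|V(H_{2,2})|=2\cdot 2+2(4\cdot 2+2)=24$, so this is a finite check running over the four octagons together with their shared and square vertices. Since $H_{2,2}$ enjoys the symmetry of the $2\times 2$ octagon block, I would choose $D$ to respect this symmetry (e.g.\ two chosen vertices per octagon, placed so that each square vertex and each boundary vertex is adjacent to a member of $D$), which makes the coverage verification transparent.

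The lower bound is the crux. The naive counting bound is too weak: every vertex of $H_{m,n}$ has degree at most $3$, so each vertex dominates at most $4$ vertices and one only obtains $\gamma(H_{2,2})\ge\lceil 24/4\rceil=6$. To close the gap to $8$ I would first attempt a packing argument, seeking a set $P$ of vertices that are pairwise at distance at least $3$, so that the closed neighborhoods $\{N[u]:u\in P\}$ are pairwise disjoint; since any dominating set must meet each such $N[u]$, a packing of size $8$ would force $\gamma(H_{2,2})\ge 8$. Because each closed neighborhood has at most $4$ vertices and $8\cdot 3=24=|V(H_{2,2})|$, such a packing could only consist of degree-$2$ (boundary) vertices whose neighborhoods \emph{exactly partition} $V(H_{2,2})$, so I would catalog the degree-$2$ vertices and test whether eight of them are mutually at distance $\ge 3$.

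The main obstacle is precisely that this partition condition is rigid and may fail, in which case the packing number is only $7$ and the bound must be argued more carefully. To be safe, and to stay consistent with the methodology of Theorem~\ref{thm:dom_1,n}, I would fall back on an exhaustive computational certificate, checking with NetworkX that none of the $\binom{24}{7}=346104$ subsets of size $7$ dominates $H_{2,2}$. This search is entirely feasible, and together with the explicit size-$8$ dominating set it establishes $\gamma(H_{2,2})=8$.
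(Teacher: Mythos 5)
Your proposal is correct and ultimately takes the same route as the paper: an explicit $8$-vertex dominating set as in Figure~\ref{fig:h_2_2_dom} for the upper bound, plus an exhaustive NetworkX check that no $7$-subset of the $24$ vertices dominates $H_{2,2}$ for the lower bound. Your packing-argument digression is a reasonable thing to try, but since you correctly identify its fragility and fall back to the computational certificate, the delivered proof matches the paper's.
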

\begin{proof}
    Figure~\ref{fig:h_2_2_dom} shows a selection of 8 vertices that dominates $H_{2,2}$, proving that $\gamma(H_{2,2}) \leq 8$. 
    As with Theorem~\ref{thm:dom_1,n}, we used the Python package NetworkX \cite{hagberg2008networkx} to confirm that no subset of 7 vertices dominates $H_{2,2}$. We conclude that $\gamma(H_{2,2})$ is exactly~8.
\end{proof}

Using Theorem~\ref{thm:dom_1,n} we establish upper bounds for larger graphs by dividing graphs into subgraphs ($H_{1,1}, H_{1,2}, H_{1,3}, H_{1,4}$) and dominating the smaller subgraphs. The subgraphs are then used to ``tile'' and form the larger $H_{m,n}$ graphs. 

\begin{theorem}\label{thm:initial_upper_bd}
    If $m,n \geq 1$, then
    \[ \gamma(H_{m,n}) \leq \begin{cases} 
      \frac{9}{4}mn & \hspace{10pt}\text{if } n \equiv 0 \text{ mod } 4\\
      \frac{9}{4}m(n-1) +3m &  \hspace{10pt}\text{if } n \equiv 1 \text{ mod } 4 \\
      \frac{9}{4}m(n-2) +5m &  \hspace{10pt}\text{if } n \equiv 2 \text{ mod } 4 \\
      \frac{9}{4}m(n-3) + 7m &  \hspace{10pt}\text{if } n \equiv 3 \text{ mod } 4.
   \end{cases}
\]
\end{theorem}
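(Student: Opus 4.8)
The plan is to build a dominating set of $H_{m,n}$ by gluing together the optimal dominating sets of the small graphs $H_{1,1},H_{1,2},H_{1,3},H_{1,4}$ whose domination numbers were computed in Theorem~\ref{thm:dom_1,n}. First I would record the two structural facts that make this work. On one hand, $H_{m,n}$ is the union of $m$ horizontal rows, where the induced subgraph on the vertices of the octagons in a single row is isomorphic to $H_{1,n}$, and every vertex of $H_{m,n}$ lies in at least one such row (a vertex shared between two rows simply lies in both). On the other hand, writing $n=4q+r$ with $q=\lfloor n/4\rfloor$ and $r\in\{0,1,2,3\}$, the $n$ octagons of a single row split into $q$ consecutive blocks of four octagons together with one trailing block of $r$ octagons, and the induced subgraph on the vertices of each such block is isomorphic to $H_{1,4}$ (respectively $H_{1,r}$). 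Consecutive blocks overlap only in the two endpoints of the single edge shared by adjacent octagons.

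Next I would choose, for each block in each row, the dominating set exhibited in Figures~\ref{fig:h11}--\ref{fig:h14}, which has size $\gamma(H_{1,4})=9$ for a full block of four octagons and size $\gamma(H_{1,r})\in\{3,5,7\}$ for the trailing block (with the convention that an empty trailing block, $r=0$, contributes nothing). Let $D$ be the union over all blocks and all rows of these chosen sets. I claim $D$ dominates $H_{m,n}$: an arbitrary vertex $u$ lies in some block $B$, and the chosen set $D_B\subseteq D$ dominates $u$ inside $B$, meaning $u\in D_B$ or $u$ is adjacent in $B$ to some $v\in D_B$. Since $B$ is an induced subgraph of $H_{m,n}$, that same edge $uv$ is present in $H_{m,n}$, so $u$ is dominated by $D$ in $H_{m,n}$ as well.

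It then remains to count. Each row contributes at most $9q+\gamma(H_{1,r})$ vertices, so
\[
\gamma(H_{m,n})\le |D|\le m\bigl(9q+\gamma(H_{1,r})\bigr),
\]
where any overcounting of the shared boundary vertices of consecutive blocks only makes this inequality safer. Substituting $\gamma(H_{1,r})\in\{0,3,5,7\}$ for $r\in\{0,1,2,3\}$ and rewriting $9q=\tfrac94(n-r)$ (since $n-r=4q$) yields exactly the four cases in the statement.

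I expect the only genuinely delicate point to be the two structural claims of the first paragraph, namely that a row restricts to an induced $H_{1,n}$ and that a block of four consecutive octagons restricts to an induced $H_{1,4}$, with consecutive blocks meeting in precisely one shared edge. These follow from the vertex-sharing description used in Lemma~\ref{lem:m=1} and Theorem~\ref{thm:enum_vertices} and are visible in Figure~\ref{fig:hmn+1}, but they must be stated carefully, because the whole union argument relies on the blocks being \emph{induced} subgraphs so that domination inside a block transfers to $H_{m,n}$. Everything after that is bookkeeping: the union of dominating sets of a covering family of induced subgraphs is a dominating set, and its size is at most the sum of the individual sizes.
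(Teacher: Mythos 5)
Your proposal is correct and follows essentially the same route as the paper's proof: decompose each of the $m$ rows into $\lfloor n/4\rfloor$ copies of $H_{1,4}$ plus a trailing $H_{1,r}$, dominate each block using Theorem~\ref{thm:dom_1,n}, and count. The only difference is that you make explicit the points the paper leaves implicit (that the blocks are induced subgraphs, so domination transfers to $H_{m,n}$, and that overlap between consecutive blocks only helps the upper bound), which strengthens rather than changes the argument.
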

\begin{proof}
We construct a dominating set using the result from Theorem \ref{thm:dom_1,n} by utilizing the subgraph $H_{1,4}$ to tile the graph $H_{m,n}$. We have four cases to consider based the residue of $n$ when reduced modulo $4$.
\begin{itemize}[leftmargin=.25in]
    \item If $n \equiv 0\mod 4$, then $H_{m,n}$ can be dominated by separating the graph into $m\left(\frac{n}{4}\right)$ ``copies" of $H_{1,4}$. By Theorem \ref{thm:dom_1,n}, $H_{1,4}$ is dominated by $9$ vertices. Hence, the graph $H_{m,n}$ can be dominated with $\frac{9}{4}mn$ vertices.
\item If $n \equiv 1\mod 4$, then we consider the first $n-1$ columns of octagons in $H_{m,n}$ to be  $m\left(\frac{n-1}{4}\right)$ copies of $H_{1,4}$, and the final column of octagons as $m$ copies of $H_{1,1}$. 
The first $n-1$ columns are dominated using $\frac{9}{4}m(n-1)$ vertices, and the final column is dominated using $3m$ vertices, per Theorem \ref{thm:dom_1,n}. Thus $H_{m,n}$ is dominated by $\frac{9}{4}m(n-1) + 3m$ vertices.

\item If $n \equiv 2\mod 4$, we consider the first $n-2$ columns of $H_{m,n}$ as $m\left(\frac{n-2}{4}\right)$ copies of $H_{1,4}$, and the final two columns as $m$ copies of $H_{1,2}$. Hence, by Theorem \ref{thm:dom_1,n}, $H_{m,n}$ is dominated by $\frac{9}{4}m(n-2) + 5m$ vertices.

\item If $n \equiv 3\mod 4$, then the first $n-3$ columns of octagons make up $m\left(\frac{n-3)}{4}\right)$ copies of $H_{1,4}$, and the final $3$ columns make up $m$ copies of $H_{1,3}$. 
Since $H_{1,3}$ is dominated with $7$ vertices, we have that $H_{m,n}$ is dominated with $\frac94 m(n-3) + 7m$ vertices.
\end{itemize}
These cases establish the upper bounds in the theorem statement.
\end{proof}

\begin{figure}
    \centering
    \resizebox{1.65in}{!}{
    \input{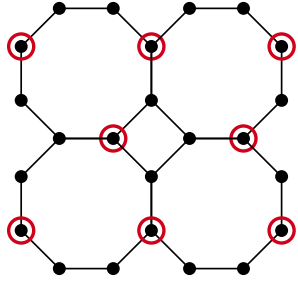}}
    \caption{The eight circled vertices form a $(2,1)$ broadcast of $H_{2,2}$. }
    \label{fig:h_2_2_dom}
\end{figure}

Utilizing the domination number for the graph $H_{2,2}$, with placement of broadcasting vertices, as illustrated in Figure~\ref{fig:h_2_2_dom}, we now give a bound for the domination number of $H_{m,n}$ for $m,n\geq 2$.

\begin{theorem}\label{2x2_bound}
    If $m,n \geq 2$, where $m = 2q_1 + r_1$ and $n = 2q_2 + r_2$, with $0\leq r_1,r_2\leq 1$ then 
    \[\gamma(H_{m,n}) \leq 8\displaystyle\Big\lceil \frac{(m+r_1)(n+r_2)}{4} \Big\rceil. \]
\end{theorem}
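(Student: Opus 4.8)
The plan is to dominate $H_{m,n}$ by tiling its octagons with blocks of at most $2\times 2$ octagons, each of which is (isomorphic to) $H_{2,2}$ or a smaller such graph, and then to dominate each block separately using Theorem~\ref{thm:dom_22}. Before doing so it is convenient to simplify the target quantity. Since $m=2q_1+r_1$ gives $m+r_1=2(q_1+r_1)$, and likewise $n+r_2=2(q_2+r_2)$, the product $(m+r_1)(n+r_2)=4(q_1+r_1)(q_2+r_2)$ is a multiple of $4$; hence $\frac{(m+r_1)(n+r_2)}{4}=(q_1+r_1)(q_2+r_2)$ is already an integer, the ceiling is vacuous, and the claimed bound equals $8(q_1+r_1)(q_2+r_2)=8\lceil m/2\rceil\lceil n/2\rceil$. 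Thus it suffices to exhibit a dominating set of $H_{m,n}$ of size at most $8\lceil m/2\rceil\lceil n/2\rceil$.

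Next I would set up the tiling. Partition the $m$ rows of octagons into $\lceil m/2\rceil=q_1+r_1$ consecutive groups, each consisting of two rows except for one leftover single row when $m$ is odd; partition the $n$ columns similarly into $q_2+r_2$ groups. This induces $(q_1+r_1)(q_2+r_2)$ blocks of octagons, each spanning at most two rows and two columns. For each block $B$ let $V_B$ be the set of vertices lying on the octagons of $B$. Every vertex of $H_{m,n}$ lies on at least one octagon and hence in at least one $V_B$, so the $V_B$ cover $V(H_{m,n})$ (a boundary vertex shared by octagons in two different blocks simply lies in both, which is harmless). The key structural point is that the subgraph of $H_{m,n}$ induced on a full $2\times 2$ block is isomorphic to $H_{2,2}$: the internal octagon and square edges coincide with those of $H_{2,2}$, and the only edges omitted are those joining a boundary vertex to a vertex outside the block.

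With the tiling in hand the domination argument is immediate. For each block $B$ choose a dominating set $D_B\subseteq V_B$ of the induced subgraph on $V_B$ with $|D_B|\le 8$: for a full $2\times 2$ block this is Theorem~\ref{thm:dom_22}, and for the smaller leftover blocks (which are $2\times 1$, $1\times 2$, or $1\times 1$ in octagons) one has $\gamma\le 8$ as well, via $\gamma(H_{2,1})=\gamma(H_{1,2})=5$ and $\gamma(H_{1,1})=3$ from Theorem~\ref{thm:dom_1,n} together with the rotational symmetry $H_{2,1}\cong H_{1,2}$. Because each $V_B$ induces a subgraph of $H_{m,n}$, a vertex dominated within that induced subgraph is dominated in $H_{m,n}$ as well (additional edges only help), so $D:=\bigcup_B D_B$ dominates $H_{m,n}$. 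Finally $|D|\le\sum_B|D_B|\le 8(q_1+r_1)(q_2+r_2)$, which is the asserted bound.

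The main obstacle I anticipate is the bookkeeping around shared boundary vertices rather than any deep difficulty: because adjacent blocks genuinely share vertices, the blocks do \emph{not} partition $V(H_{m,n})$, so I must argue (i) that they nonetheless \emph{cover} every vertex and (ii) that domination of a block as an induced subgraph transfers to $H_{m,n}$. Both are routine, but they are precisely what makes the overlapping-tile count a valid \emph{upper} bound (rather than an exact value, as the overlaps make it loose). A secondary point requiring care is confirming that the induced subgraph on each full $2\times 2$ block really is $H_{2,2}$, so that Theorem~\ref{thm:dom_22} applies verbatim, and that the leftover single-row or single-column blocks arising when $m$ or $n$ is odd are each dominated by at most $8$ vertices.
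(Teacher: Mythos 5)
Your proof is correct, and its core idea---tiling $H_{m,n}$ by $2\times2$ blocks of octagons, each dominated by $8$ vertices via Theorem~\ref{thm:dom_22}---is the same as the paper's. Where you genuinely diverge is the treatment of odd $m$ or $n$: the paper pads the graph, viewing $H_{m,n}$ inside $H_{m+r_1,n+r_2}$ and tiling the \emph{enlarged} graph, justifying this only with the remark that ``we are giving an upper bound.'' As written, that step tacitly assumes $\gamma(H_{m,n})\leq\gamma(H_{m+r_1,n+r_2})$, i.e.\ monotonicity of the domination number under taking subgraphs, which is false in general: a dominating set of the padded graph may contain vertices outside $V(H_{m,n})$, and discarding them need not leave a dominating set of $H_{m,n}$. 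You avoid this entirely by keeping every tile inside $H_{m,n}$, handling the leftover $1\times2$, $2\times1$, and $1\times1$ blocks directly via Theorem~\ref{thm:dom_1,n} (using $\gamma(H_{1,1})=3$ and $\gamma(H_{1,2})=5\leq 8$, with $H_{2,1}\cong H_{1,2}$), and you explicitly verify the two points the overlapping-tile argument requires---that the block vertex sets cover $V(H_{m,n})$, and that domination within an induced block transfers to the ambient graph since extra edges only help. So your route buys rigor at precisely the paper's weakest step, at the mild cost of a case analysis of leftover blocks; your side observation that $(m+r_1)(n+r_2)=4(q_1+r_1)(q_2+r_2)$ is divisible by $4$, so the ceiling in the statement is vacuous, is also correct and goes unremarked in the paper.
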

    \begin{proof}
        We construct a dominating set for $H_{m,n}$ using the result from Theorem~\ref{thm:dom_22}. Note that $r_1, r_2 \in \{0,1\}$, depending on the parity of $m$ and $n$, respectively. 
        If $m,n$ are even, $r_1=r_2=0$ and we construct a dominating set by considering $H_{m,n} = H_{m+r_1,n+r_2}$ as $\frac{(m+r_1)(n+r_2)}{4}$  copies of $H_{2,2}$. Since $\gamma(H_{2,2}) = 8$ (Theorem~\ref{thm:dom_22}), we multiply by $8$ and conclude \[\gamma(H_{m,n}) \leq 8\displaystyle\Big\lceil \frac{(m+r_1)(n+r_2)}{4} \Big\rceil.\]

        In the case that $m$ and $n$ are not both even, $|V(H_{m,n})| < |V(H_{m+r_1,n+r_2})|$. To construct a dominating set for $H_{m,n}$ using $\gamma(H_{2,2})$, we  consider dominating $H_{m+r_1,n+r_2}$ since we are giving an upper bound to the domination number of $H_{m,n}$. Then we analogously consider $H_{m+r_1,n+r_2}$ as $\frac{(m+r_1)(n+r_2)}{4}$  copies of $H_{2,2}$. Since $\gamma(H_{2,2}) = 8$, we multiply by $8$ and conclude \[\gamma(H_{m,n}) \leq 8\displaystyle\Big\lceil \frac{(m+r_1)(n+r_2)}{4} \Big\rceil.\qedhere\]
    \end{proof}

We now construct a better upper bound for the domination number of $H_{m,n}$.
In fact, asymptotically, the  bound in our next result (Theorem~\ref{thm:better_upper_bd}) is tighter than that of Theorem ~\ref{thm:initial_upper_bd}, as the leading terms (of the form $mn$) have coefficients $1$ and $\frac{9}{4}$, respectively.

\begin{theorem}\label{thm:better_upper_bd}
    If $m,n \geq 1$, then $\gamma(H_{m,n}) \leq n(m+1) + m$.
    \end{theorem}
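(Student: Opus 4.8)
The plan is to build an explicit dominating set by induction on the number of rows $m$, adding one row of octagons at a time while maintaining a boundary invariant that lets each new row ``borrow'' domination from the row above it. First I would fix notation for the eight vertices of the octagon in row $i$, column $j$: write $t_L,t_R$ for its two top vertices, $b_L,b_R$ for its two bottom vertices, $r_T,r_B$ for the two vertices on its right edge and $l_T,l_B$ for the two on its left edge, superscripting each by $(i,j)$. The facts I would record are the gluing relations $r_T(i,j)=l_T(i,j+1)$ and $r_B(i,j)=l_B(i,j+1)$ (horizontal sharing), together with $b_L(i,j)=t_L(i+1,j)$ and $b_R(i,j)=t_R(i+1,j)$ (vertical sharing), and the within-octagon adjacencies, in particular that $t_R$ is adjacent to $r_T$ and that $b_R$ is adjacent to both $r_B$ and $b_L$.

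For the base case $m=1$ I would take $D_{1,n}=\{l_T(1,1)\}\cup\{r_T(1,j):1\le j\le n\}\cup\{b_R(1,j):1\le j\le n\}$, of size $2n+1=n(1+1)+1$. To see it dominates, observe that octagon $(1,j)$ contains the three chosen vertices $l_T(1,j)$ (which is $l_T(1,1)$ when $j=1$ and $r_T(1,j-1)$ otherwise), $r_T(1,j)$, and $b_R(1,j)$, and that in the $8$-cycle these three closed neighborhoods already cover all eight vertices. Since every vertex lies in some octagon, this settles $m=1$ and incidentally reproduces the value $\gamma(H_{1,n})=2n+1$ of Theorem~\ref{thm:dom_1,n}.

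For the inductive step, assuming a dominating set $D_{m-1,n}$ of the stated size $mn+m-1$ that contains $b_R(i,j)$ for every octagon in its bottom row, I would set $D_{m,n}=D_{m-1,n}\cup\{b_R(m,j):1\le j\le n\}\cup\{l_B(m,1)\}$, adding exactly $n+1$ vertices so that $|D_{m,n}|=(mn+m-1)+(n+1)=n(m+1)+m$. The only vertices needing checking are those of row $m$ not shared with row $m-1$; the shared top vertices are already dominated by $D_{m-1,n}$. Here each $b_R(m,j)$ dominates $r_B(m,j)$ and $b_L(m,j)$; each $r_T(m,j)$ is dominated ``for free'' by $t_R(m,j)=b_R(m-1,j)$, which lies in $D_{m-1,n}$ by the invariant; and the only remaining new vertices, $l_T(m,1)$ and $l_B(m,1)$ at the far left of the row, are exactly those covered by the single extra vertex $l_B(m,1)$. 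Since all of $b_R(m,\cdot)$ were added, the invariant is maintained, and the induction closes.

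The main obstacle I anticipate is the bookkeeping at the left and bottom boundary of each newly added row. The borrowing argument handles every octagon whose top edge is glued to the row above, but the leftmost octagon's left-edge vertices have no neighbor above or to the left, which is precisely why one extra dominator per row is forced and why the bound carries the additive $+m$. I would therefore confirm case by case that $l_B(m,1)$ together with $b_R(m,1)$ dominates all six new vertices of octagon $(m,1)$, and apply the vertical sharing relations with the correct orientation so that $t_R(m,j)$ really equals $b_R(m-1,j)$. A final consistency check is that the construction yields $8$ for $H_{2,2}$, matching the exact value $\gamma(H_{2,2})=8$ of Theorem~\ref{thm:dom_22}.
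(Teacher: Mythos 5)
Your proof is correct and takes essentially the same approach as the paper: the first row is dominated by $2n+1$ vertices and each subsequent row by $n+1$ additional vertices, giving $2n+1+(m-1)(n+1)=n(m+1)+m$. The paper simply exhibits this pattern in a figure (whose row-one dominators coincide with your $D_{1,n}$, while in later rows it circles the right-edge vertices $r_B(i,j)$ instead of your $b_R(i,j)$ --- an immaterial variation), so your induction with the bottom-row invariant is just a rigorous rendering of the same construction.
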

    \begin{proof}
        The first row of octagons in $H_{m,n}$ can be dominated with $2n + 1$ vertices, as in Figure~\ref{fig:better_upper_bd}. The subsequent row of octagons is dominated by an additional $(n+1)$ vertices, and this construction is repeated for the following $(m-2)$ rows. Hence any graph $H_{mn,}$ can be dominated by $2n+1 + (m-1)(n+1) = n(m+1) + m$ vertices.
    \end{proof}

In Theorem~\ref{thm:initial_low_bd} we establish a lower bound for the $(t,1)$ domination number of the graph $H_{m,n}$. The specialization of $t=2$ is provided below.
\begin{theorem}\label{thm:dom_low_bd}
    If $m,n \geq 1$ then\[ \gamma(H_{m,n})\geq \displaystyle\Big\lceil \frac{2m+n(4m+2)}{4} \Big\rceil.\]
\end{theorem}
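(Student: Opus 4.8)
The plan is to derive this bound from the elementary closed-neighborhood counting principle for domination, using the $3$-regularity of the truncated square tiling. The first thing to notice is that the right-hand side is not arbitrary: by Theorem~\ref{thm:enum_vertices} we have $|V(H_{m,n})| = 2m+n(4m+2)$, so the claim is exactly
\[
\gamma(H_{m,n}) \geq \Big\lceil \tfrac{|V(H_{m,n})|}{4} \Big\rceil.
\]
This reframing makes clear that the only structural input we need is a uniform bound on vertex degrees, namely that no vertex can dominate more than four vertices (itself together with its neighbors).

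The key step I would carry out first is to establish that every vertex of $H_{m,n}$ has degree at most $3$. In the infinite truncated square tiling $\gH$, each vertex lies at the junction of one square and two octagons and is incident to exactly three edges, so $\gH$ is $3$-regular. Since $H_{m,n}$ is obtained by restricting to a finite patch of this tiling, each of its vertices has degree at most the degree it had in $\gH$; boundary vertices have degree $1$ or $2$, and interior vertices have degree $3$. Hence the maximum degree of $H_{m,n}$ is $3$. (Boundary vertices having \emph{smaller} degree only strengthens the counting bound, so the uniform estimate $\deg(v)\leq 3$ is all that is required.)

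With this in hand, the conclusion follows by the standard counting argument. Let $T\subseteq V(H_{m,n})$ be any $(2,1)$ broadcast, i.e.\ a dominating set, and for $v\in T$ write $N[v]=\{v\}\cup\{u : u \text{ adjacent to } v\}$ for its closed neighborhood. Since $T$ dominates $H_{m,n}$, every vertex lies in some $N[v]$, so $V(H_{m,n})=\bigcup_{v\in T}N[v]$, giving
\[
|V(H_{m,n})| \;\leq\; \sum_{v\in T}\bigl(1+\deg(v)\bigr) \;\leq\; 4\,|T|.
\]
Therefore $|T|\geq |V(H_{m,n})|/4$, and since $|T|$ is an integer, $|T|\geq \bigl\lceil |V(H_{m,n})|/4\bigr\rceil$. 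Minimizing over all dominating sets $T$ and substituting the vertex count from Theorem~\ref{thm:enum_vertices} yields the stated bound. I do not anticipate a genuine obstacle here: the entire content lies in the $3$-regularity observation, which is immediate from the geometry of the tiling, after which the argument is a one-line pigeonhole count. The only point that warrants care is confirming that passing from $\gH$ to the finite subgraph $H_{m,n}$ cannot \emph{increase} any degree, which is clear since $H_{m,n}$ is an induced finite patch.
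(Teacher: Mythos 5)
Your proposal is correct and follows essentially the same route as the paper's proof: both rest on the observation that every vertex of $H_{m,n}$ has degree at most $3$, so each dominating vertex covers at most $4$ vertices via its closed neighborhood, and then divide the vertex count $2m+n(4m+2)$ from Theorem~\ref{thm:enum_vertices} by $4$ and take the ceiling. Your write-up merely makes the pigeonhole step $|V(H_{m,n})|\leq \sum_{v\in T}(1+\deg(v))\leq 4|T|$ explicit, which the paper states more briefly.
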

\begin{proof}
    By Theorem~\ref{thm:enum_vertices}, we have that the total number of vertices $|V(H_{,mn})| = 2m+n(4m+2)$. Notice every vertex in $H_{m,n}$ has  degree at most $3$, and so any broadcasting vertex dominates itself and up to $3$ neighboring vertices. Hence we divide the total number of vertices by $4$, the number of vertices that can be dominated by a single broadcasting vertex. 
\end{proof}

\begin{figure}[h]
    \centering
    \input{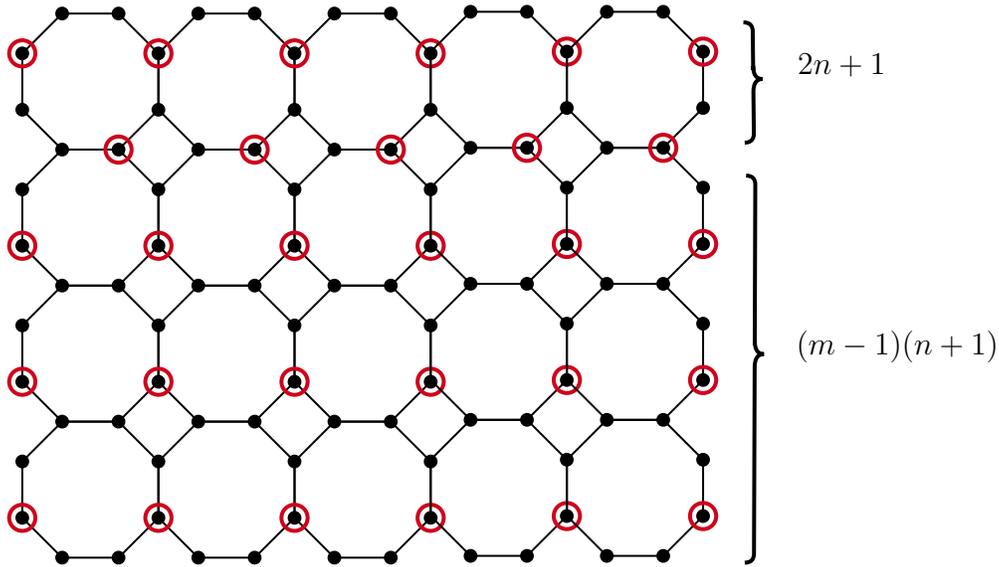}
    \caption{Construction of upper bound for $\gamma(H_{m,n})$.}
    \label{fig:better_upper_bd}
\end{figure}

\section{Bounds for the distance domination number of \texorpdfstring{$H_{m,n}$}{H	extunderscore{m,n}}}
Note that the $(t,1)$ broadcast domination number of $H_{m,n}$ is precisely the $(t-1)$-distance domination number of the graph. 
Thus 
knowing the number of vertices that are a distance  of $t-1$ away from a broadcasting vertex helps establish an initial lower bound for the $(t,1)$ broadcast domination number of $H_{m,n}$. 

The number of vertices at a fixed distance from a broadcasting vertex has been previously studied; we refer to the work of 
Goodman-Strauss and Sloane \cite{goodman-strauss2019}.
For a fixed vertex $v$, the  sequence $(a_n)_{n\geq 0}$ which counts the number of vertices a distance of $n$ away from $v$ in the infinite truncated square tiling graph is called the crystallographic \emph{coordination sequence} of a vertex. 
 The formula for the coordination sequence of a vertex is entry  \cite[\href{https://oeis.org/A008576}{A008576}]{OEIS} and is 
given by $a(0) = 1$ and thereafter $a(3k) = 8k$, $a(3k + 1) = 8k + 3$, $a(3k + 2) = 8k + 5$. 
The coordination sequence begins with
\[1, 3, 5, 8, 11, 13, 16, 19, 21, 24, 27, 29, 32, 35, 37, \ldots.\]
We adapt the definition of the coordination sequence for our purposes as follows.

\begin{definition}
   For $t\geq 2$, we let $c(t)$ be the $(t-1)$th entry in the coordination sequence. That is,  $c(t)$ denotes the number of vertices which are exactly a distance of $t-1$ away from a vertex $v\in \gH$.
\end{definition}

In what follows we let $P_v(t)$ be the subgraph of $H_{\infty,\infty}$, which consists of all vertices (and their incident edges) that are within a distance of $t-1$ away from a fixed vertex $v$. For an example, see Figure~\ref{fig:P(5)} illustrating for $P_v(5)$, and note that for a fixed $t$, the region created by $P_v(t)$ is not necessarily convex. 

\begin{figure}[h!]
    \centering
    \begin{subfigure}{.4\textwidth}
        \centering
\includegraphics[width=2in,trim=30cm 23cm 30cm 17cm,clip]{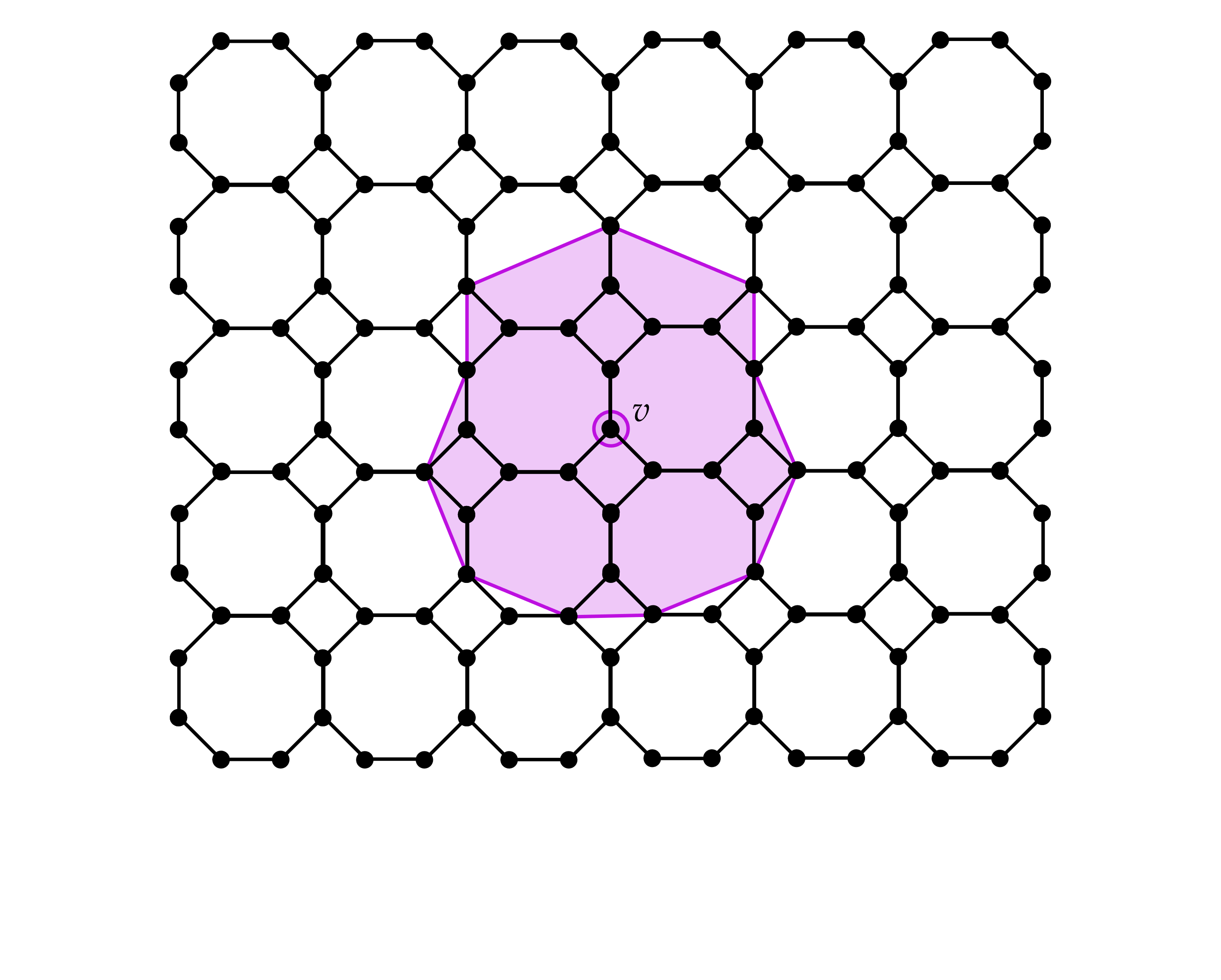}
    \caption{Area consisting of vertices at most a distance of $5$ from vertex~$v$.}
    \label{fig:P(5)}    
    \end{subfigure}
    \qquad
    \begin{subfigure}{.4\textwidth}
        \centering
        \includegraphics[width=2in,trim=30cm 23cm 30cm 17cm,clip]{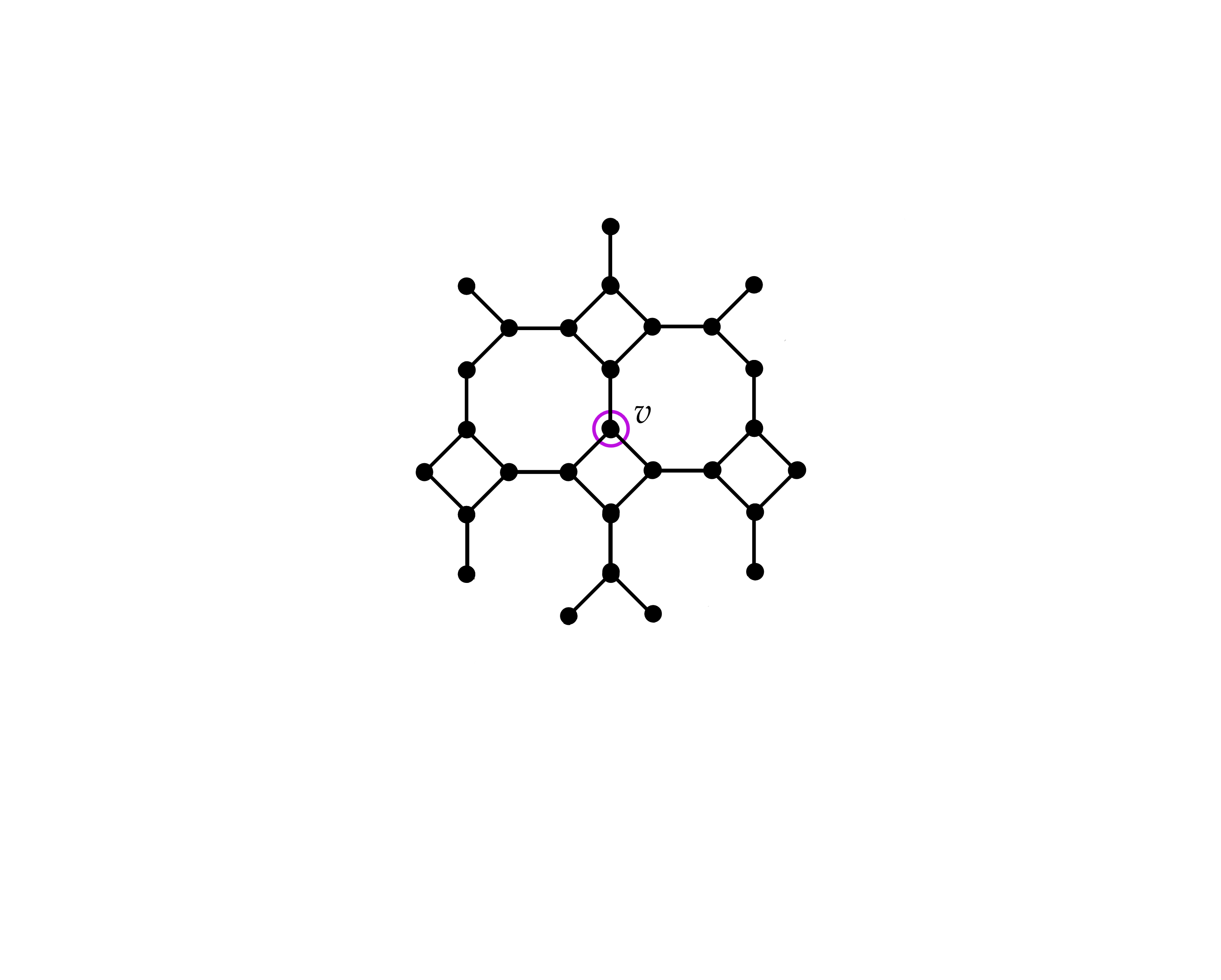}
        \caption{The subgraph $P_{v}(5)$.\linebreak}
    \end{subfigure}
    \caption{Illustrating vertices a maximum distance of $5$ away from a fixed vertex $v$, and the corresponding subgraph $P_v(5)$.}\label{fig:Pv5}
    \end{figure}

\begin{lemma}\label{lem:P(t)}
    Fix a vertex $v\in H_{\infty,\infty}$. If $t\geq 2$, then $\,\,|V(P_v(t))| = 1+\displaystyle \sum_{i=2}^{t} c(i)\,$.
    \begin{proof}
        The proof follows directly from the definition of $c(t)$. To enumerate the vertices in $P_v(t)$, we must count all vertices distance $j$ from vertex $v$, with $1 \leq j \leq t-1$. This corresponds to the sum of $c(i)$ for $2 \leq i \leq t$, or $\sum_{i=2}^t c(i)$. Lastly, we add $1$ to this sum to account for the vertex $v$ which is also in $P_v(t)$. Thus $|V(P_v(t))| = 1+ \sum_{i=2}^t c(i)$, as claimed.
    \end{proof}
\end{lemma}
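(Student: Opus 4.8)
The plan is to partition the vertex set of $P_v(t)$ by graph distance from $v$ and then read off the size of each piece directly from the coordination sequence. First I would recall that, by the definition of $P_v(t)$, every vertex $u \in V(P_v(t))$ satisfies $d(u,v)\leq t-1$, and conversely every such vertex lies in $P_v(t)$. Since $d(\cdot,v)$ is a well-defined function on the vertices, the value $d(u,v)$ lands in exactly one of the integers $0,1,\dots,t-1$. This lets me write $V(P_v(t))$ as the disjoint union of the \emph{distance shells}
\[
S_j = \{\, u \in V(P_v(t)) : d(u,v) = j \,\}, \qquad 0 \leq j \leq t-1,
\]
so that $|V(P_v(t))| = \sum_{j=0}^{t-1} |S_j|$.

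Next I would compute each shell size. The shell $S_0$ consists only of $v$ itself, so $|S_0| = 1$. For $1\leq j\leq t-1$, the shell $S_j$ is exactly the set of vertices at distance $j$ from $v$; by the definition of $c$, in which $c(i)$ is declared to count the vertices at distance $i-1$ from a vertex, this means $|S_j| = c(j+1)$. Substituting and separating off the $j=0$ term gives
\[
|V(P_v(t))| = 1 + \sum_{j=1}^{t-1} c(j+1).
\]
Finally, reindexing the sum via $i = j+1$ (so $i$ runs from $2$ to $t$) yields $1 + \sum_{i=2}^{t} c(i)$, which is the claimed formula.

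I do not expect any real obstacle here, since the statement is essentially an unwinding of the definitions of $P_v(t)$ and $c$. The only point requiring care is the index bookkeeping: keeping straight that the coordination-sequence entry $c(i)$ corresponds to distance $i-1$ (not distance $i$), so that a vertex at distance $j$ is counted by $c(j+1)$, and then verifying the clean reindexing $\sum_{j=1}^{t-1} c(j+1) = \sum_{i=2}^{t} c(i)$. The disjointness and exhaustiveness of the shells are automatic from $d(\cdot,v)$ being single-valued, so no further justification is needed for writing $V(P_v(t))$ as their union.
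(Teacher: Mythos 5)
Your proposal is correct and follows essentially the same approach as the paper's proof: both count vertices by partitioning $P_v(t)$ into distance shells from $v$, identify the shell at distance $j$ (for $1\leq j\leq t-1$) with the coordination-sequence value $c(j+1)$, and add $1$ for $v$ itself. Your version merely makes the partition and the reindexing $i=j+1$ explicit, which the paper leaves implicit.
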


    \begin{figure}
     \centering
     \begin{subfigure}[b]{0.3\textwidth}
         \centering
         \includegraphics[width=1.75in, trim=45cm 92cm 50cm 20cm, clip]{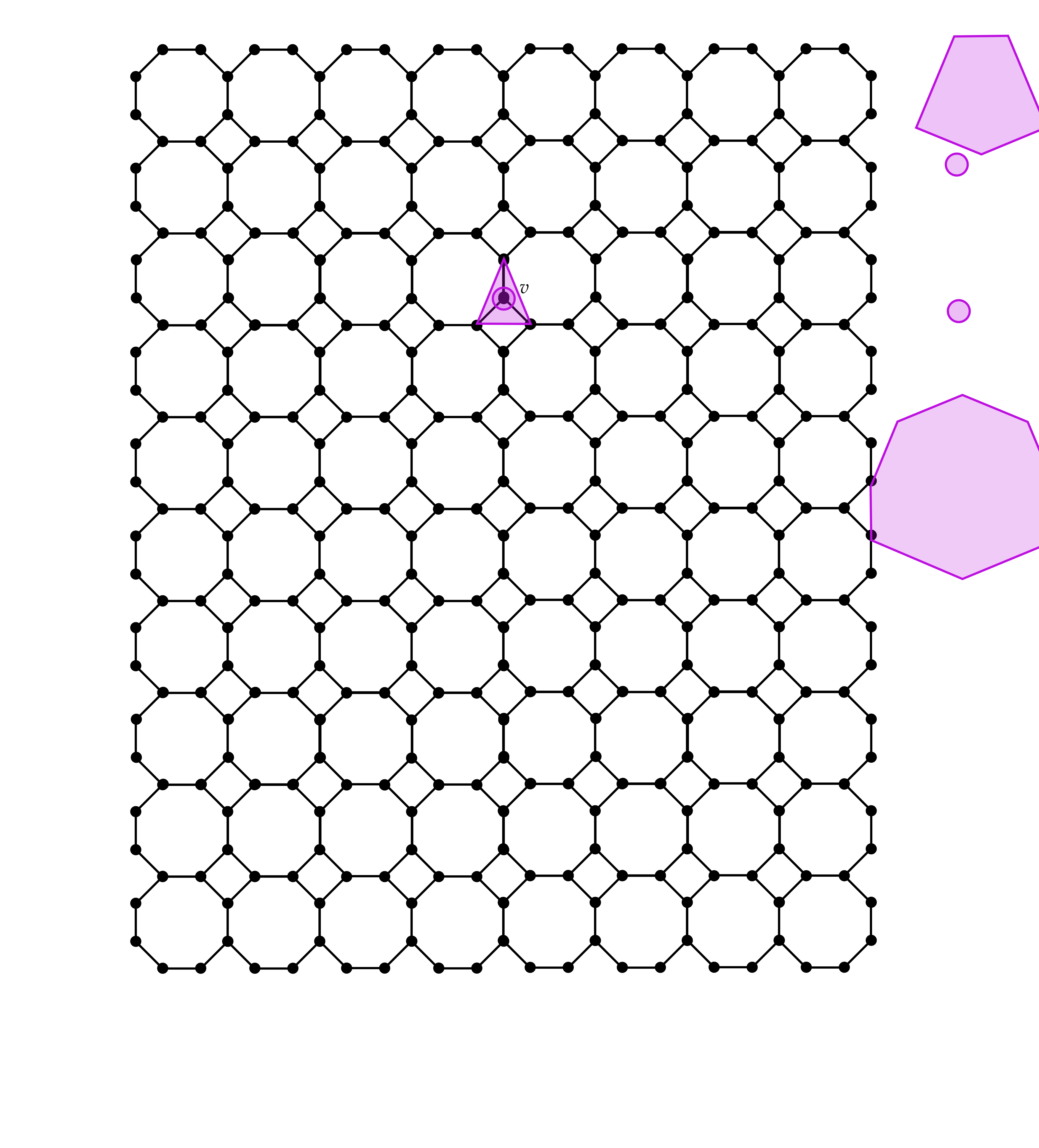}
         \caption{$c(2)=3$}
         \label{fig:c(2)}
     \end{subfigure}     
     \begin{subfigure}[b]{0.3\textwidth}
         \centering
         \includegraphics[width=1.75in, trim=45cm 92cm 50cm 20cm, clip]{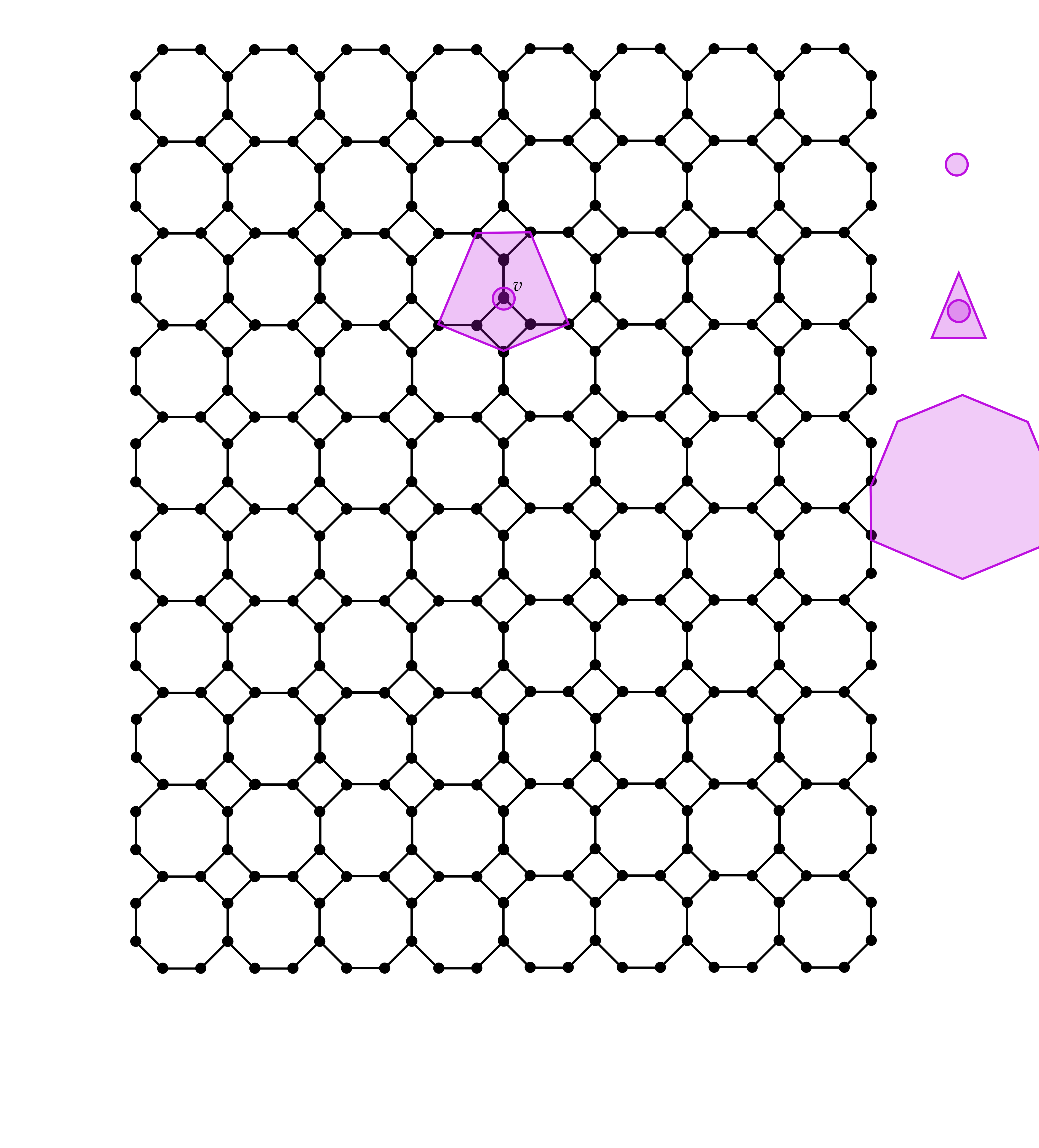}
         \caption{$c(3)=5$}
         \label{fig:c(3)}
     \end{subfigure}
     \begin{subfigure}[b]{0.3\textwidth}
         \centering
         \includegraphics[width=1.75in, trim=45cm 92cm 50cm 20cm, clip]{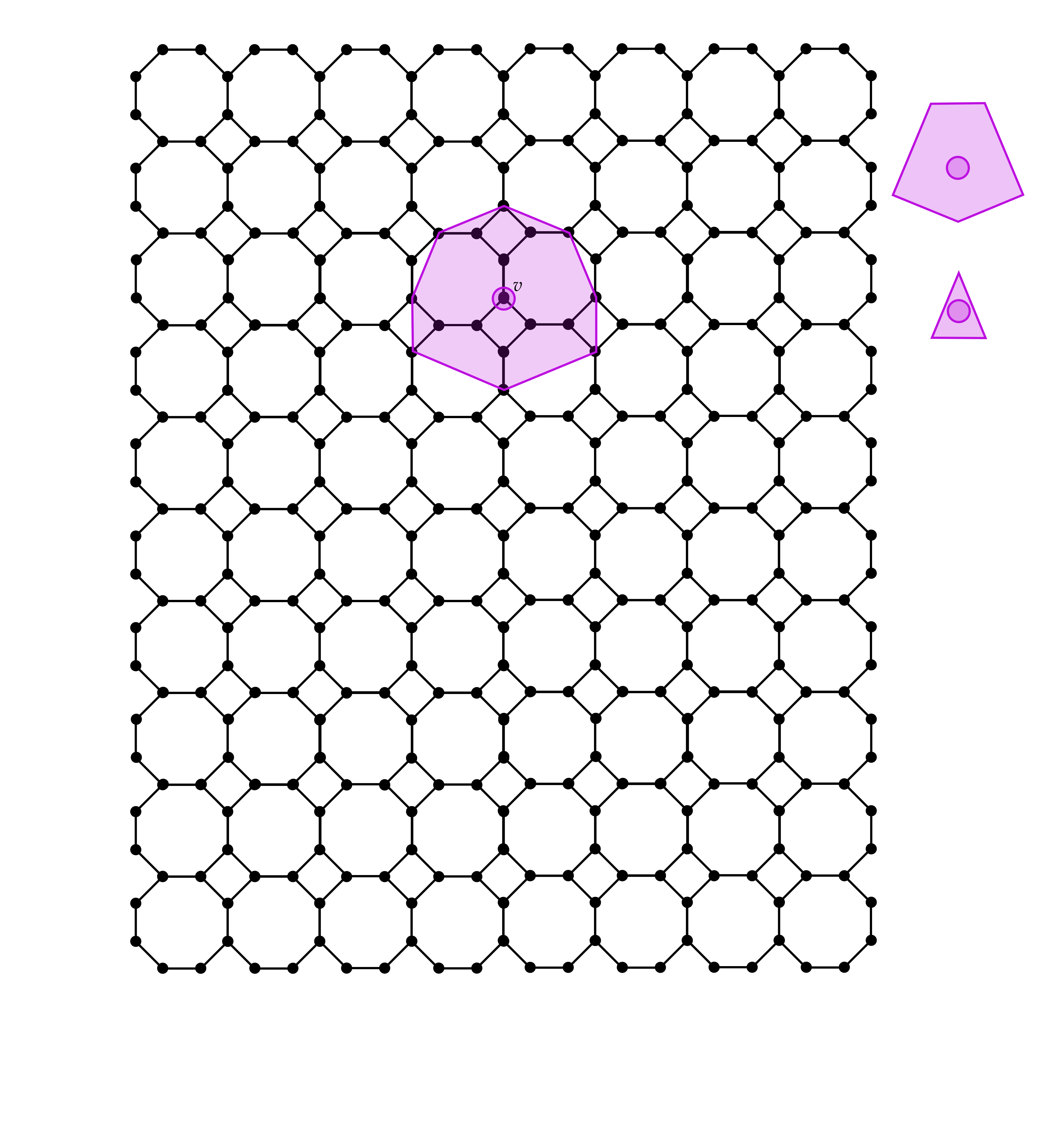}
         \caption{$c(4)=8$}
         \label{fig:c(4)}
     \end{subfigure}
     \hfill
     \hfill
        \caption{Figures to illustrate how we calculate $|P_v(4)| = 1+3+5+8=17$.}
        \label{fig:Pv(t)pf}
\end{figure}

We now give a lower bound for the distance domination number  $\gamma_{t,1}(H_{m,n})$.
\begin{theorem}\label{thm:initial_low_bd}
    If $m,n \geq 1$ and $t \geq 2$, then\[\gamma_{t,1}(H_{m,n}) \geq \displaystyle\Big\lceil \frac{2m+n(4m+2)}{|V(P_v(t))|} \Big\rceil.\]
    \end{theorem}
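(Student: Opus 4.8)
The plan is to run a covering (pigeonhole) argument based on the observation that when $r=1$ the $(t,1)$ broadcast condition reduces to ordinary $(t-1)$-distance domination. First I would unwind the definition in~\eqref{eq:f(u)}: for a vertex $u$ to satisfy $f(u)\geq 1$, there must be at least one broadcasting vertex $v\in T$ with $d(u,v)\leq t-1$, since each such $v$ contributes $t-d(u,v)\geq 1$, while any vertex farther than $t-1$ contributes nothing. Hence a set $T$ is a $(t,1)$ broadcast precisely when every vertex of $H_{m,n}$ lies within distance $t-1$ of some vertex of $T$; equivalently, the balls of radius $t-1$ centered at the vertices of $T$ cover $V(H_{m,n})$.

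Next I would bound the number of vertices a single broadcasting vertex can cover. For a fixed $v\in V(H_{m,n})$, let $B(v)$ be the set of vertices of $H_{m,n}$ at distance at most $t-1$ from $v$. Because $H_{m,n}$ is a subgraph of $H_{\infty,\infty}$, deleting vertices and edges can only increase (or leave unchanged) pairwise distances, so $d_{H_{m,n}}(u,v)\geq d_{H_{\infty,\infty}}(u,v)$ for all $u,v\in V(H_{m,n})$. Consequently every $u\in B(v)$ also satisfies $d_{H_{\infty,\infty}}(u,v)\leq t-1$, i.e.\ $u\in V(P_v(t))$, whence $|B(v)|\leq |V(P_v(t))|$. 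Here I would invoke the vertex-transitivity of the truncated square tiling (equivalently, that the coordination sequence is identical at every vertex) to conclude that $|V(P_v(t))|$ is independent of the choice of $v$, so that the value computed via Lemma~\ref{lem:P(t)} applies uniformly.

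Finally I would assemble the counting inequality. Since the balls $\{B(v):v\in T\}$ cover $V(H_{m,n})$, we have $|V(H_{m,n})|\leq \sum_{v\in T}|B(v)|\leq |T|\cdot|V(P_v(t))|$. Substituting $|V(H_{m,n})|=2m+n(4m+2)$ from Theorem~\ref{thm:enum_vertices} and rearranging gives $|T|\geq (2m+n(4m+2))/|V(P_v(t))|$; taking $T$ to be a minimum $(t,1)$ broadcast and using integrality (the ceiling) yields the claimed lower bound on $\gamma_{t,1}(H_{m,n})$.

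The main obstacle is the second step, specifically the legitimacy of bounding the finite-graph ball by the infinite-graph ball. I would need to be careful that $H_{m,n}$ is genuinely embedded as a subgraph of $H_{\infty,\infty}$ so that distance monotonicity holds, and to state clearly that vertex-transitivity makes $|V(P_v(t))|$ well-defined independent of $v$. Boundary vertices of $H_{m,n}$ simply have smaller balls, which only strengthens the inequality and hence does not affect the validity of the lower bound.
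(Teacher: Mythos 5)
Your proposal is correct and follows essentially the same route as the paper: a pigeonhole argument dividing $|V(H_{m,n})|=2m+n(4m+2)$ by the maximum number of vertices a single broadcasting vertex can reach, namely $|V(P_v(t))|$, with the ceiling supplying integrality. You are somewhat more careful than the paper in justifying that finite-graph balls are no larger than $|V(P_v(t))|$ (via distance monotonicity for subgraphs and vertex-transitivity of the tiling), points the paper only gestures at with its parenthetical remark about broadcasting vertices near the exterior boundary.
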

    \begin{proof}
        This lower bound follows from the fact that when $r=1$, each vertex $u$ in $H_{m,n}$ must have reception strength $f(u) \geq 1$, where $f(u)$ is as defined in equation \eqref{eq:f(u)}. We take the total number of vertices in $H_{m,n}$ and divide by the maximum number of vertices which receive reception from a broadcasting vertex of transmission strength $t$. (Note that we say maximum because if a broadcasting vertex is near an exterior edge of the graph, then $|V(P_v(t))|$ may be smaller.) We take the ceiling of this value to ensure an integer bound.
    \end{proof}

\section{Broadcast domination patterns for the truncated square tiling}\label{sec:densities}

In what follows we consider the infinite truncated square tiling and provide results on the proportions of vertices needed to dominate the graph. We begin by recalling the following definition.

\begin{definition}[Page 2 in \cite{DrewsHarrisRandolph}]
Given an infinite graph $G$, we refer to a $(t,r)$ broadcast dominating set $T$ as a $(t,r)$ \textit{broadcast} for $G$.    
The \textit{broadcast density} of the $(t,r)$ broadcast $T$ for $G$ is defined by the limit where we consider a finite portion of the graph $G$, denoted $H$, and compute
\[\lim_{n \to \infty} \frac{|T \cap V(H)|}{|V(H)|}.\] 
The \emph{optimal density} of a $(t,r)$ broadcast for $G$ is defined as the minimum broadcast density over all $(t,r)$ broadcasts. We denote the optimal density of a $(t,r)$ broadcast by $
\delta_{t,r}(G)$.
\end{definition}

We now establish upper bounds for the optimal density of $(t,r)$ broadcasts for $\gH$ when $(t,r)\in\{(2,1),(2,2),(3,1),(3,2),(3,3),(4,1)\}$.

\begin{theorem}\label{thm:density21}
    The optimal density of a $(2,1)$ broadcast for $\gH$ satisfies
    \[\delta_{2,1}(\gH)\leq \frac14.\]
\end{theorem}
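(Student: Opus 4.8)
The plan is to exploit the observation, already recorded in the excerpt, that a $(2,1)$ broadcast is nothing more than an ordinary dominating set. Indeed, for $t=2$ the neighborhood $N_2(v)$ consists of $v$ together with its neighbors, so a broadcasting vertex $v$ contributes signal $2$ to itself and signal $1$ to each neighbor; the requirement $f(u)\geq 1$ for every $u$ is then exactly the statement that each vertex of $\gH$ lies in the closed neighborhood $N[v]=\{v\}\cup N(v)$ of some $v\in T$. Thus it suffices to produce a dominating set $T$ of $\gH$ of density $\tfrac14$.

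The structural fact driving the construction is that every vertex of $\gH$ has degree exactly $3$, so each closed neighborhood $N[v]$ contains exactly $4$ vertices (equivalently, $|V(P_v(2))|=4$). Consequently, if $T$ can be chosen so that the closed neighborhoods $\{N[v]:v\in T\}$ partition $V(\gH)$ — that is, every vertex is dominated by exactly one broadcasting vertex — then $T$ is an efficient dominating set and its density is precisely $\tfrac14$. This is forced: a dominating set of density exactly $\tfrac14$ in a $3$-regular graph must leave no overlaps, so the natural construction to exhibit is the periodic, overlap-free pattern depicted in Figure~\ref{fig:Delta21}, selecting one broadcasting vertex per octagon arranged so that chosen vertices are pairwise at distance at least $3$ while still reaching every vertex.

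To make this rigorous I would first fix explicit coordinates for the vertices of $\gH$ using the translational symmetry of the truncated square tiling: the tiling is invariant under a rank-$2$ lattice $\Lambda$ of translations, and a fundamental domain contains a fixed finite number of vertices together with a fixed finite number of chosen broadcasting vertices. I would then verify two claims on this fundamental domain, which by periodicity propagate to the whole graph: first, \emph{domination}, that each vertex of the fundamental domain lies within distance $1$ of some broadcasting vertex; and second, \emph{the density count}, that the number of broadcasting vertices in the fundamental domain divided by the total number of vertices in it equals $\tfrac14$. Passing to the limit in the definition of broadcast density — using that both counts grow linearly in the number of translated copies of the fundamental domain — then yields $\delta_{2,1}(\gH)\leq \tfrac14$.

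The main obstacle is verifying domination without gaps: because the reach of a single broadcasting vertex is minimal (just its $4$ closed-neighborhood vertices), the pattern leaves no slack, and one must check carefully that the selected vertices are positioned so that no vertex is missed — in particular none of the degree-$3$ vertices sitting on the edges shared between adjacent octagons. Identifying a correct fundamental domain and confirming that the closed neighborhoods partition it (rather than overlap, which would either strand some vertex or push the density above $\tfrac14$) is the crux; once this partition is exhibited, the density computation follows immediately from the degree-$3$ regularity.
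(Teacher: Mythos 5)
Your reduction of the $(2,1)$ problem to ordinary domination, and your limiting-density bookkeeping, are both fine, and your route is genuinely different from the paper's: the paper never constructs an infinite broadcast for this case at all, but instead combines the vertex count $|V(H_{n,n})|=2n+n(4n+2)=4n^2+4n$ (Theorem~\ref{thm:enum_vertices}) with the finite bound $\gamma(H_{n,n})\leq n(n+1)+n$ (Theorem~\ref{thm:better_upper_bd}) and evaluates $\lim_{n\to\infty}(n^2+2n)/(4n^2+4n)=\tfrac14$. However, as written your proposal has a genuine gap: for an upper bound proved by construction, the entire content is the explicit broadcast set, and you never exhibit one. You list the properties the set should have (roughly one broadcasting vertex per octagon, pairwise distances at least $3$, closed neighborhoods partitioning $V(\gH)$), defer all verification to an unspecified fundamental domain, and yourself label the domination check ``the crux''; a proof that stops at the crux is a plan, not a proof. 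A smaller quibble: insisting that the closed neighborhoods partition $V(\gH)$ is unnecessary --- the theorem only requires \emph{some} dominating set of density at most $\tfrac14$, perfect or not --- so the remark that overlaps are ``forced'' to be absent adds a constraint you do not need and do not verify.

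The gap is easy to fill using the paper's own coordinate system $(a,(x,y))$ with $a\in\mathbb{Z}_4$, $(x,y)\in\Z^2$: take $T=\{(2,(x,y)):(x,y)\in\Z^2\}$, the bottom vertex of every $4$-cycle. Then $(1,(x,y))$ and $(3,(x,y))$ are adjacent to $(2,(x,y))\in T$ within their $4$-cycle, and $(0,(x,y))$ is adjacent to $(2,(x,y+1))\in T$, so every vertex lies in the closed neighborhood of a broadcasting vertex; since $T$ contains exactly one of the four vertices of each $4$-cycle, the density is $\tfrac14$ (and one checks each vertex is in fact covered exactly once, confirming your perfect-code picture and matching the pattern of Figure~\ref{fig:Delta21}). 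With this set written down, your argument closes and even has an advantage over the paper's: it produces an explicit infinite $(2,1)$ broadcast, whereas the paper's limit argument gets the bound for free from its finite results but exhibits no infinite pattern.
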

\begin{proof}
    Fix a positive integer $n$. Then by Theorem~\ref{thm:enum_vertices}
    we know the number of vertices in $V(H_{n,n})=2n+n(4n+2)=4n^2+4n$. Then by Theorem~\ref{thm:better_upper_bd} we know $\gamma_{2,1}(H_{n,n})\leq n(n+1) + n=n^2+2n$.
    Thus \[\delta_{2,1}(\gH)\leq \lim_{n\to\infty}\frac{n^2+2n}{4n^2+4n}=\frac14.\qedhere\]
\end{proof}

Before providing our next results we need the following notation.
In order to describe $(t,r)$ broadcast dominating sets for $H_{\infty, \infty}$, we fix the truncated square lattice in $\mathbb{R}^2$ and establish a coordinate system by selecting an arbitrary $4$-cycle in $H_{\infty, \infty}$, which we refer to as the \emph{origin} of the graph. 
Note that the origin $(0,0)$ refers to a $4$-cycle of vertices, not to a vertex in the graph $H_{\infty, \infty}$, see Figure~\ref{fig:graph with origin}. 
Once we fix a $4$-cycle as the origin, we color its interior with orange.
Then we give a coordinate system $(x,y)\in\Z\times\Z$ to indicate the position of each $4$-cycle in the graph. 
We then refer to the vertices on a $4$-cycle by numbering them as on a clock naming the top vertex $0$, the right vertex $1$, the bottom vertex $2$, and the left vertex~$3$. See Figure \ref{fig:origin with vertices}.

\begin{figure}[h!]
    \centering
    \begin{subfigure}[t]{.4\textwidth}
    \centering
    
    \includegraphics[height=2in,trim=6cm .5cm .5cm .5cm,
    clip]{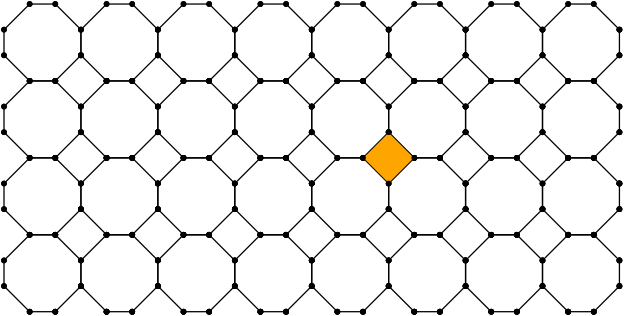}
    \caption{Identifying an origin of $\gH$.}
    \label{fig:graph with origin}
    \end{subfigure}
\qquad\qquad\qquad
    \begin{subfigure}[t]{.4\textwidth}
    \centering
    \includegraphics[height=1.75in,trim=2cm 2.25cm 2cm 2cm,clip]{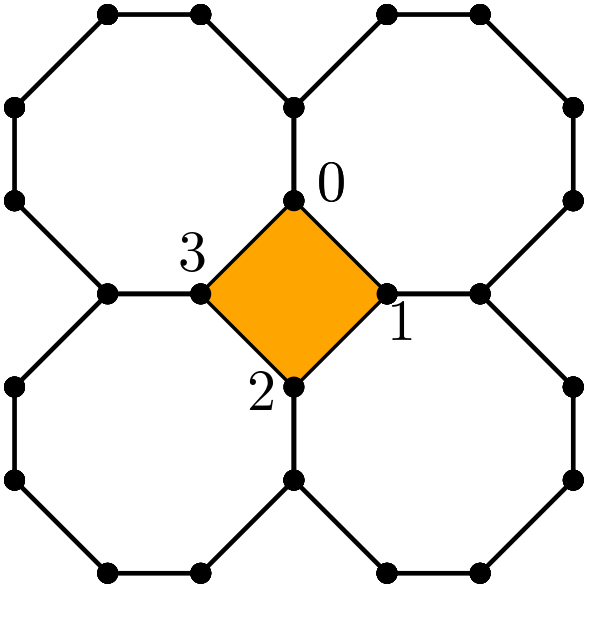}
    \caption{The vertices of the origin.}
    \label{fig:origin with vertices}
\end{subfigure}
\caption{The graph $\gH$ with a fixed $4$-cycle called the origin (highlighted in orange) and the vertices of the origin labeled $0,1,2,3$.}\label{fig:origin of H}    
\end{figure}

With the origin fixed, every vertex in $H_{\infty, \infty}$ can be described as a tuple $(a,(x,y))$, where $a\in\mathbb{Z}_4$, and $(x,y)\in\mathbb{Z}^2$.
For example, Figure~\ref{fig:examples} gives the location of the vertices $v=(1,(2,3))$ and $w=(1,(-1,1))$.

\begin{figure}[h!]
    \centering
    \includegraphics[width=3.5in,trim=8cm 4cm .25cm 2cm,clip]{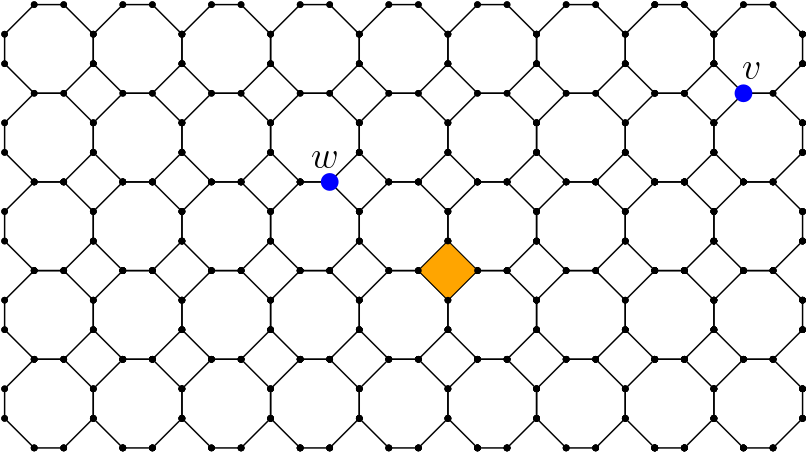}
    \caption{Position of the vertices $v=(1,(2,3))$ and $w=(1,(-1,1))$, relative to the fixed origin on $\gH$.}
    \label{fig:examples}
\end{figure}

In light of this notation, the set of vertices of $\gH$ are described by 
\[V(\gH)=\{(a,(x,y)): a\in\mathbb{Z}_4, (x,y)\in\mathbb{Z}^2\}.\]

\begin{theorem}\label{thm:density22}
The optimal density of a $(2,2)$ broadcast for $\gH$ satisfies
    \[\delta_{2,2}(H_{\infty,\infty})\leq \frac12.\]
\end{theorem}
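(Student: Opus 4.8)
The plan is to prove the bound by exhibiting an explicit, fully periodic $(2,2)$ broadcast of $\gH$ whose density is exactly $\frac12$; producing any broadcast of density $\frac12$ bounds the optimal density from above. The first step is to rephrase the broadcast condition combinatorially for the case $t=r=2$. A broadcasting vertex $v$ of strength $2$ reaches only vertices $u$ with $d(u,v)<2$, sending signal $2-d(u,v)$; thus it contributes $2$ to itself and $1$ to each neighbor. Consequently $T$ is a $(2,2)$ broadcast precisely when every vertex either lies in $T$ (so $f(u)\ge 2$ from its own signal) or has at least two neighbors in $T$ (so $f(u)\ge 1+1=2$).

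Next I would use that the $4$-cycles partition $V(\gH)$: in the truncated square tiling each vertex lies on exactly one square, so every vertex $(a,(x,y))$ belongs to the unique square indexed by $(x,y)$, on which the clock-labels form a $4$-cycle in the cyclic order $0,1,2,3$. I would then take
\[
T=\{(0,(x,y)),\,(2,(x,y)) : (x,y)\in\Z\times\Z\},
\]
the top and bottom vertex of every square, and verify the condition above. Vertices of label $0$ or $2$ lie in $T$, hence have reception $2$. A vertex of label $1$ is adjacent, within its own square, to the vertices of labels $0$ and $2$; the same holds for a vertex of label $3$. Therefore every vertex outside $T$ already receives $1+1=2$ from its two square-neighbors, so the third (octagon-octagon) edge is never needed to meet the threshold, and $T$ is a valid $(2,2)$ broadcast.

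Finally I would compute the density. Since the squares partition $V(\gH)$ and $T$ meets each square in exactly two of its four vertices, for any sequence of finite regions $H$ exhausting $\gH$ we have $|T\cap V(H)|/|V(H)|\to\frac12$, the only discrepancy coming from the lower-order count of squares truncated by the boundary of $H$. Concretely, taking $H=H_{n,n}$ and invoking Theorem~\ref{thm:enum_vertices} (so $|V(H_{n,n})|=4n^2+4n$) as in the proof of Theorem~\ref{thm:density21} gives the limit $\frac12$, whence $\delta_{2,2}(\gH)\le\frac12$. There is no serious obstacle here: the construction is essentially forced once one observes that selecting a pair of opposite vertices on each square simultaneously dominates the remaining two with reception exactly $2$. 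The only points demanding care are the adjacency bookkeeping---checking that labels $1$ and $3$ are each adjacent inside the square to both chosen labels $0$ and $2$---and confirming that boundary squares contribute only an $O(n)$ correction to the $O(n^2)$ vertex count, so that the density limit is unaffected.
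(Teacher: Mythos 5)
Your proposal is correct and follows essentially the same approach as the paper: exhibit a periodic $(2,2)$ broadcast selecting two opposite vertices of each $4$-cycle, verify via the within-square adjacencies that each unselected vertex receives $1+1=2$, and read off the density $\frac{2}{4}=\frac12$. The only difference is that you take the pair $\{0,2\}$ on every square, whereas the paper alternates between $\{0,2\}$ and $\{1,3\}$ according to the parity of the square's position; your uniform choice is a mild simplification that eliminates the paper's two-case analysis without changing the argument.
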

\begin{proof}
    We construct a $(2,2)$ broadcast dominating set $T$ for $\gH$ consisting of the subset of vertices
    \[T = T_1\cup T_2\cup T_3 \cup T_4, \]
    where 
    \begin{align*}
    T_1&=\{(0, (x, x+2y)): x,y\in \Z\}, \\ 
    T_2&=\{(2, (x, x+2y)): x,y\in \Z\},  \\
    T_3&=\{(1, (x, x+2y + 1)): x,y\in \Z\}, \mbox{ and}\\
    T_4&=\{(3, (x, x+2y+1)): x,y\in \Z\}.
    \end{align*}
    We illustrate the positioning of the vertices in the set $T$ in Figure~\ref{fig:rowsfor22} and in this proof we establish that the density of this broadcast is $\frac12$. To begin we prove that $T$ is a $(2,2)$ broadcast of $\gH$.

\begin{figure}[h!]
    \centering
    \includegraphics[width=4in,trim=1.5cm 1.5cm 1.5cm 1.5cm,clip]{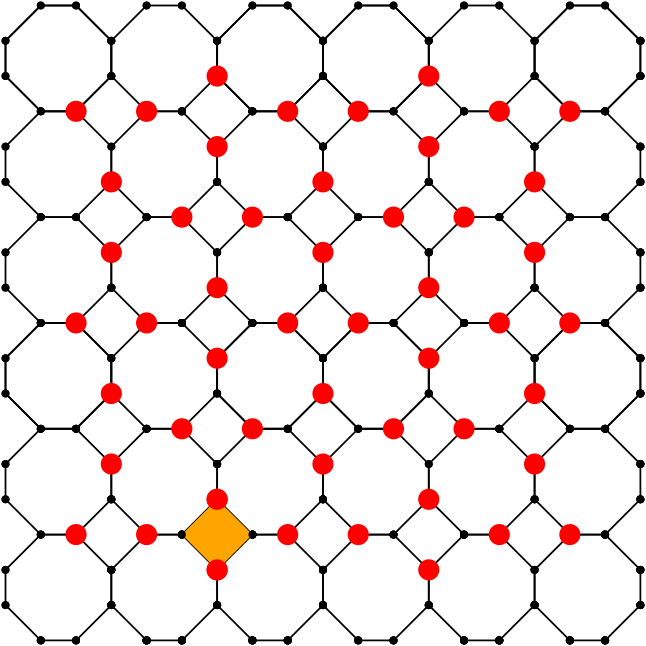}
    \caption{The position of the broadcasting vertices in the $(2,2)$ broadcast of Theorem~\ref{thm:density22}, which has density~$\frac{1}{2}$.}
    \label{fig:rowsfor22}
    
\end{figure}

Because each broadcasting vertex in $T$ receives reception $2$ from itself, we focus only on the vertices in the set $V(H_{\infty, \infty}) \setminus T$.
This set consists of the vertices in the set 
\[A_1\cup A_2,\]
where
\begin{align*}
A_1&=\{(a, (x, x+2y)) : a\in\{1,3\}, x,y \in \mathbb{Z}\}, \mbox{ and}\\
A_2&=\{(a, (x, x+2y+1)) : a\in\{0,2\}, x,y \in \mathbb{Z}\}.
\end{align*}
We now proceed via a case-by-case analysis to show that for each $1\leq i\leq 2$, if $v\in A_i$, then it receives reception of at least $2$.
\begin{figure}[h!]
    \centering
    \begin{subfigure}[t]{.4\textwidth}
    \centering

\includegraphics[width=1.35in,trim=1.5cm 1.3cm 1.5cm 1.5cm,clip]{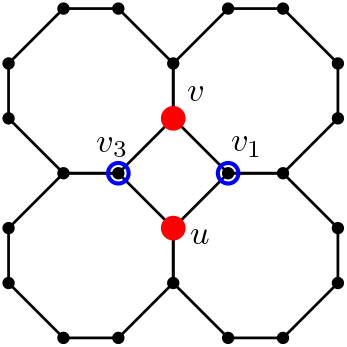}
    \caption{Let $(t,r)=(2,2)$. the vertices $v_1$ and $v_3$ each receive reception one from each of the broadcasting vertices $v$ and $u$. Hence they receive reception at least two and are dominated.}
    \label{fig:22pf1}
\end{subfigure}    
\qquad\qquad
    \begin{subfigure}[t]{.4\textwidth}
    \centering
    \includegraphics[width=1.35in,trim=2.5cm 2.5cm 2.5cm 2.5cm,clip]{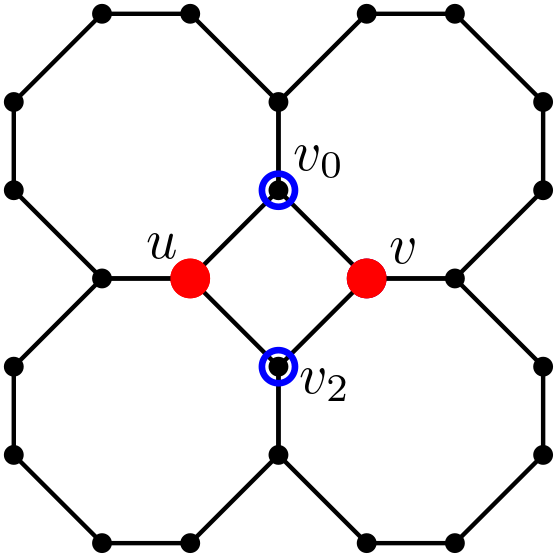}
    \caption{Let $(t,r)=(2,2)$. The vertices $v_0$ and $v_2$ each receive reception one from each of the broadcasting vertices $v$ and $u$. Hence they receive reception at least $2$ and are dominated.}
    \label{fig:22pf2}
\end{subfigure}
\caption{Illustrations for cases (1) and (2) in the proof of Theorem~\ref{thm:density22}.}
\end{figure}

\begin{enumerate}
\item Fix arbitrary integers $x,y\in\Z$. 
To begin consider 
vertices $v_1=(1,(x,x+2y))$ and $v_3 = (3,(x,x+2y))$ in the set $A_1$.
Observe that the vertices $v=(0,(x,x+2y))$ and $u=(2, (x, x+2y))$ are broadcasting vertices in $T$.
Then the vertices $v_1$ and $v_3$ lie within a distance of 1 from the broadcasting vertex $v=(0,(x,x+2y))$, and $v_1$ and $v_3$ lie within a distance of 1 from the broadcasting vertex $u=(2, (x, x+2y))$. Thus $v_1$ and $v_3$ receive reception 1 from both of the broadcasting vertices $u$ and $v$, see Figure~\ref{fig:22pf1} for an illustration.

Therefore $v_1$ and $v_3$ receive reception at least  $2$ and are dominated. As $x$ and $y$ were arbitrary, all of the vertices in the set $A_1$ are dominated.

\item Fix arbitrary integers $x,y\in\Z$. 
To begin consider 
vertices $v_0=(0,(x,x+2y+1))$ and $v_2 = (2,(x,x+2y+1))$ in the set $A_2$.
Observe that the vertices $v=(1,(x,x+2y+1))$ and $u=(3, (x, x+2y+1))$ are broadcasting vertices in $T$.

Then the vertices $v_0$ and $v_2$ lie within a distance of $1$ from the broadcasting vertex $v=(1,(x,x+2y+1))$, and $v_1$ and $v_3$ lie within a distance of $1$ from the broadcasting vertex $u=(3, (x, x+2y+1))$. Thus $v_0$ and $v_2$ receive reception $1$ from both of the broadcasting vertices $u$ and $v$, see Figure~\ref{fig:22pf2} for an illustration.

Therefore $v_0$ and $v_2$ receive reception at least $2$ and are dominated. As $x$ and $y$ were arbitrary, all of the vertices in the set $A_2$ are dominated.
\end{enumerate}
We have shown that $T$ is a $(2,2)$ broadcast for $\gH$.
We now want to determine the proportion of vertices in $\gH$ that lie in $T$.

To compute this proportion, we note that 
every $4$-cycle has $2$ broadcasting vertices, see Figure~\ref{fig:rowsfor22}. Thus, the overall proportion of broadcasting vertices in $T$ to vertices in the graph $\gH$ is given by $1/2$.
Thus $\delta_{2,2}(\gH)\leq \frac{1}{2}$, as claimed.
\end{proof}

\begin{theorem}\label{thm:density31}
The optimal density of a $(3,1)$ broadcast for $\gH$ satisfies
    \[\delta_{3,1}(H_{\infty,\infty})\leq \frac{1}{8}.\]
\end{theorem}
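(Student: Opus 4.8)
The parameters $(t,r)=(3,1)$ mean that a vertex $u$ meets its threshold as soon as some broadcasting vertex $v$ satisfies $3-d(u,v)\geq 1$, that is $d(u,v)\leq 2$. Since any single broadcaster within distance $2$ already delivers reception at least $1=r$, no additive interaction between broadcasters needs to be tracked, and the problem reduces to exhibiting a set $T$ that $2$-distance dominates $\gH$. By Lemma~\ref{lem:P(t)} a single broadcasting vertex reaches $|V(P_v(3))|=1+c(2)+c(3)=1+3+5=9$ vertices, so every $(3,1)$ broadcast has density at least $\tfrac19$. The plan is therefore to construct an explicit periodic broadcast whose density is the slightly larger value $\tfrac18$; the gap between $\tfrac19$ and $\tfrac18$ reflects the unavoidable overlap of the distance-$2$ reaches when they are packed to cover the whole tiling.

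The construction follows the template of the proof of Theorem~\ref{thm:density22}, working in the coordinate description $V(\gH)=\{(a,(x,y)):a\in\Z_4,\ (x,y)\in\Z^2\}$. I would place exactly one broadcasting vertex for every two $4$-cycles, so that $T$ is a periodic set containing one element per eight graph vertices. Concretely, $T$ is taken to be a finite union of classes of the form used in Theorem~\ref{thm:density22}, each fixing a vertex label $a\in\Z_4$ and letting the position range over a coset of a fixed sublattice of $\Z^2$, with the labels and cosets tuned so that the resulting density is $\tfrac14\cdot\tfrac12=\tfrac18$ and the $9$-vertex reaches of the chosen vertices cover $\gH$ with overlap only along their boundaries. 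Note this budget is consistent: one broadcaster per eight vertices with each reach covering nine vertices gives an average covering multiplicity of $\tfrac98$, so most vertices are covered once and about an eighth are covered twice.

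The bulk of the argument, and the main obstacle, is the coverage verification: one must show that every vertex of $V(\gH)\setminus T$ lies within distance $2$ of some element of $T$. As in Theorem~\ref{thm:density22}, I would partition $V(\gH)\setminus T$ into finitely many classes indexed by the label $a\in\Z_4$ together with the residue of $(x,y)$ modulo the period, and for each class exhibit a specific nearby broadcasting vertex together with a path of length at most $2$ reaching it. The delicate point is that distances in the truncated square tiling are not given by a simple coordinate formula: each vertex is a corner of one square and two octagons (the $4.8.8$ configuration), so a distance-$2$ step may remain on a square or turn onto an octagon, and the reach $P_v(3)$ is not convex (cf.\ Figure~\ref{fig:P(5)}). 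I would control this by reading the local adjacency off the coordinate system directly, recording for each label $a$ its three neighbors and their labels and positions, and then composing two such steps; figures analogous to Figures~\ref{fig:22pf1} and~\ref{fig:22pf2} would accompany the cases.

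Finally, once coverage is established the density computation is immediate. The set $T$ is invariant under the period lattice, and within one fundamental domain there are $8$ vertices and exactly one element of $T$; hence, taking $H$ to exhaust $\gH$ (for instance through the graphs $H_{n,n}$, whose vertex counts $4n^2+4n$ are supplied by Theorem~\ref{thm:enum_vertices}), the limiting proportion $|T\cap V(H)|/|V(H)|$ equals $\tfrac18$. Combining this with the covering claim produces a valid $(3,1)$ broadcast of density $\tfrac18$, which gives $\delta_{3,1}(\gH)\leq\tfrac18$, as asserted.
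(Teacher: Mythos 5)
There is a genuine gap: your proposal outlines the same strategy as the paper (a periodic broadcasting set of one vertex per two $4$-cycles, a case analysis over residue classes, and a density computation), but it never actually produces the object whose existence is the entire content of the theorem. The sentence beginning ``Concretely, $T$ is taken to be a finite union of classes \dots with the labels and cosets tuned so that\dots'' is not a construction; it asserts that suitable labels and cosets exist without exhibiting them, and your own text then concedes that ``the bulk of the argument, and the main obstacle, is the coverage verification'' while deferring that verification entirely. The multiplicity bookkeeping you offer (one broadcaster per $8$ vertices, each reaching $|V(P_v(3))|=9$ vertices, so average covering multiplicity $\tfrac98$) shows only that a density-$\tfrac18$ covering is not ruled out by counting; it does not show one exists. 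Since $P_v(3)$ is an irregular, non-convex $9$-vertex region in the $4.8.8$ tiling, the existence of a near-perfect packing of these regions is exactly the nontrivial claim, and it cannot be waved through by a consistency check.

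For comparison, the paper resolves this by writing down the explicit set $T=\{(2,(x,x+4y)):x,y\in\Z\}\cup\{(1,(x,x+4y+2)):x,y\in\Z\}$, i.e., bottom vertices of $4$-cycles on the diagonals $y\equiv x \pmod 4$ and right vertices on the diagonals $y\equiv x+2\pmod 4$, and then partitions $V(\gH)\setminus T$ into four families $A_1,\dots,A_4$ indexed by the residue of $y-x$ modulo $4$. The easy cases are the diagonals containing broadcasters (the other three vertices of such a $4$-cycle are within distance $2$ of the broadcaster in the same $4$-cycle); the delicate cases are the diagonals $y\equiv x+1$ and $y\equiv x+3\pmod 4$, where every vertex of the $4$-cycle must be matched to a broadcaster in a \emph{neighboring} $4$-cycle at distance exactly $2$ (e.g., the bottom vertex of a $4$-cycle on the $y\equiv x+3$ diagonal is served by the right vertex of the $4$-cycle immediately below it). Your proposal correctly anticipates that such cases arise and even identifies the right tool (reading local adjacency off the coordinate system), but without the specific $T$ these cases cannot be stated, let alone checked. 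As written, the proposal establishes the reduction of $(3,1)$ broadcast domination to $2$-distance domination and the (correct but uncalled-for) lower bound $\delta_{3,1}(\gH)\geq\tfrac19$, and stops short of proving the stated upper bound.
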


\begin{proof}
We construct a $(3,1)$ broadcast $T$ for $H_{\infty,\infty}$ consisting of the subset of vertices
\[T=\{(2, (x, x+4y)) : x,y \in \mathbb{Z}\} \cup \{(1, (x, x+4y+2)) : x,y \in \mathbb{Z}\}.\] 
We illustrate the positioning of the vertices in the set $T$ in Figure~\ref{fig:rowsfor31} and in this proof we establish that the density of this broadcast is $\frac18$. To begin we prove that $T$ is a $(3,1)$ broadcast for $\gH$.
\begin{figure}[h!]
    \centering
\includegraphics[width=3.5in,trim=1.5cm 1.5cm 1.5cm 1.5cm,clip]{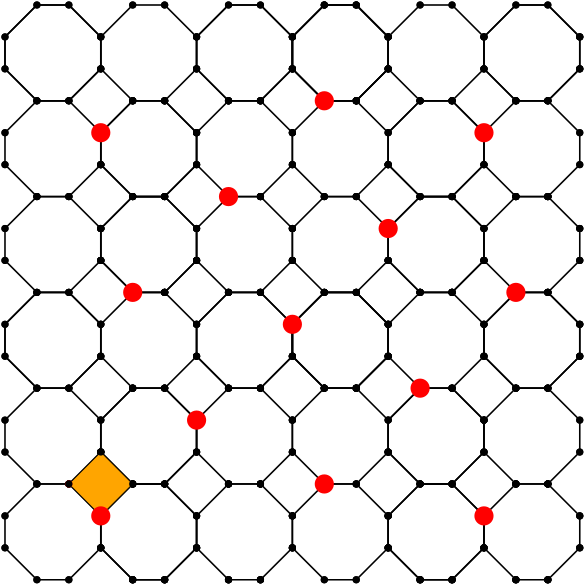}
    \caption{The position of the broadcasting vertices  in the $(3,1)$ broadcast for $\gH$ used in the proof of Theorem~\ref{thm:density31}, which has density $\frac{1}{8}$.}
    \label{fig:rowsfor31}
\end{figure}
Because each broadcasting vertex in $T$ receives reception $3$ from itself, we focus only on the vertices in the set $V(H_{\infty, \infty}) \setminus T$.
This set consists of the vertices in 
\[A_1\cup A_2\cup A_3\cup A_4,\]
where
\begin{align*}
A_1&=\{(a, (x, x+4y)) : a\in\{0,1,3\}, x,y \in \mathbb{Z}\},\\
A_2&=\{(a, (x, x+4y+2)) : a\in\{0,2,3\}, x,y \in \mathbb{Z}\},\\
A_3&= \{(a, (x, x+4y + 3)) :  a\in\{0,1,2,3\}, x,y \in \mathbb{Z}\}\mbox{, and}\\
A_4&= \{(a, (x, x+4y+1)) : a\in\{0,1,2,3\}, x,y \in \mathbb{Z} \}.
\end{align*}
We now proceed via a case-by-case analysis to show that for each $1\leq i\leq 4$, if $v\in A_i$, then it receives reception at least $1$.

\begin{figure}
    \centering
    \begin{subfigure}[t]{.4\textwidth}
    \centering
    \includegraphics[width=1.35in, trim=1.75cm 1.75cm 1.75cm 1.75cm,clip]{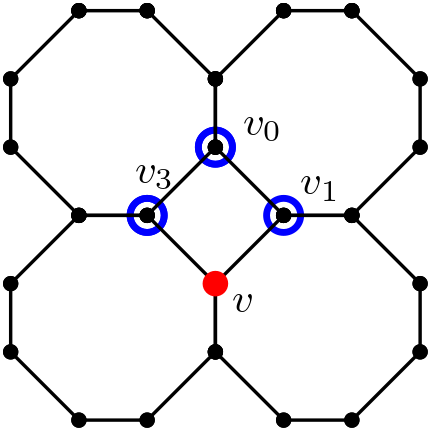}
    \caption{
    The vertices $v_0, v_1, v_3$ receive reception at least $1$ from broadcasting vertex $v$, hence they are dominated.}
    \label{fig:31pf1}
    \end{subfigure}
\qquad\qquad    
        \begin{subfigure}[t]{.4\textwidth}
    \centering
    \includegraphics[width=1.35in, trim=1.75cm 1.75cm 1.75cm 1.75cm,clip]{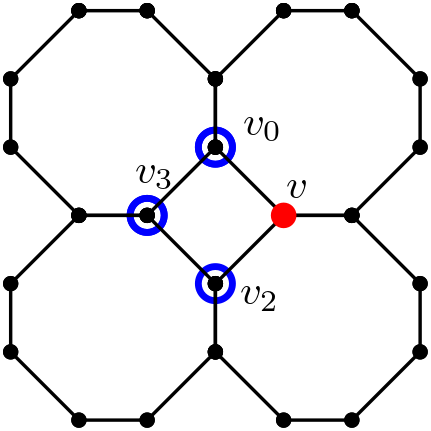}
    \caption{
    The vertices $v_0, v_2, v_3$ receive reception at least $1$ from broadcasting vertex $v$, hence they are dominated.}
    \label{fig:31pf2}
\end{subfigure}
\\
        \begin{subfigure}[t]{.4\textwidth}
    \centering
    \includegraphics[width=1.5in, trim=1.5cm 4cm 1cm 4cm,clip]{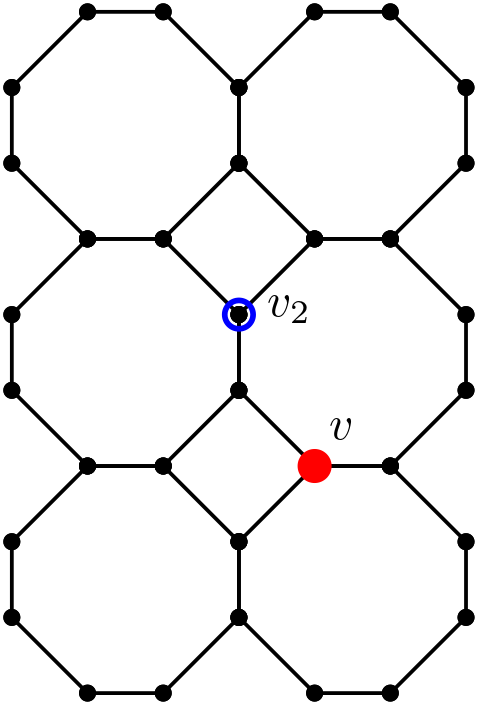}
    \caption{
    The vertex $v_2$ receives reception $1$ from broadcasting vertex $v$, hence $v_2$ is dominated.}
    \label{fig:success}
\end{subfigure}
\qquad\qquad
        \begin{subfigure}[t]{.4\textwidth}
    \centering
    \includegraphics[width=1.5in, trim=1cm 3cm 1cm 3cm,clip]{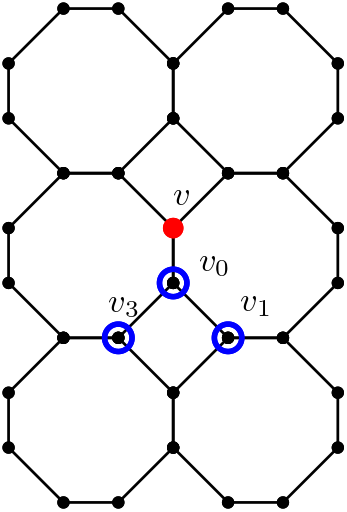}
    \caption{
    The distance from the broadcasting vertex $v$ to vertices $v_0$, $v_1$, and $v_3$ is at most $2$, hence these vertices are dominated by the broadcasting vertex~$v$.}
    \label{fig:success2}
\end{subfigure}
\\
        \begin{subfigure}[t]{.4\textwidth}
    \centering
    \includegraphics[width=2in, trim=3.25cm 3cm 3.5cm 3cm,clip]{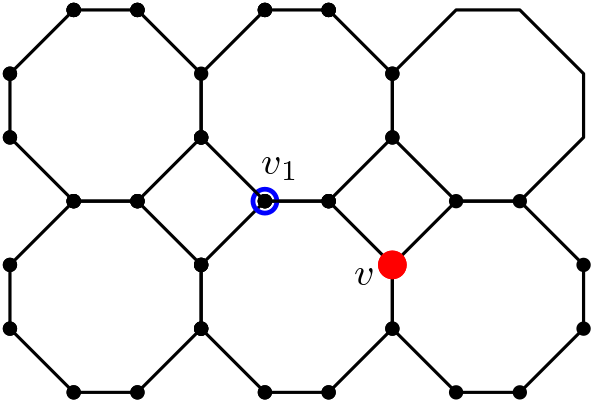}
    \caption{
    The vertex $v_1$ is a distance of $2$ from the broadcasting vertex $v$. Hence $v_1$ receives reception $1$ from $v$ and is dominated.}
    \label{fig:success3}
\end{subfigure}
\qquad
        \begin{subfigure}[t]{.4\textwidth}
    \centering
    \includegraphics[width=2in, trim=3.5cm 3.6cm 9cm 11.5cm,clip]{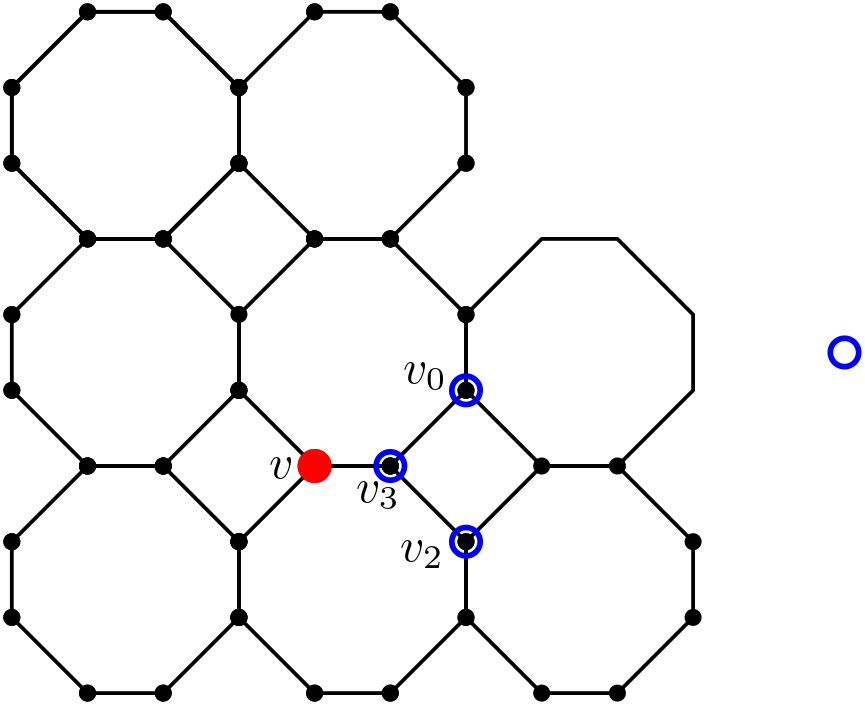}
    \caption{
    The distance from the broadcasting vertex $v$ to vertices $v_0$, $v_2$, and $v_3$ is at most $2$, hence these vertices are dominated by the broadcasting vertex $v$.}
    \label{fig:success4}
\end{subfigure}

    \caption{Illustrations with $(t,r)=(3,1)$ for cases ($1$)-($6$) in the proof of Theorem~\ref{thm:density31}.}
\end{figure}

\begin{enumerate}
    \item 
Fix arbitrary integers $x,y\in\Z$. Consider the vertices $v_0=(0,(x,x+4y))$, $v_1=(1,(x,x+4y))$, and $v_3=(3,(x,x+4y))$.
Note $v_0,v_1,v_3\in A_1=\{(a, (x, x+4y)) : a\in\{0,1,3\}, x,y \in \mathbb{Z}\}$. 
Now observe that there exists vertex $v=(2, (x, x+4y))$ in $T$, see Figure~\ref{fig:31pf1} for an illustration.

Then the vertices $v_0,v_1,v_3$ lie within a distance of  $2$ from the broadcasting vertex $v=(2,(x,x+4y))$, and thus receive reception at least 1 from that vertex. Thus, as $x,y$ were arbitrary, all of the vertices in $A_1$ are dominated.

\item Fix arbitrary integers $x,y\in\Z$. Consider the  vertices $v_0=(0,(x,x+4y+2))$, $v_2=(2,(x,x+4y+2))$, and $v_3=(3,(x,x+4y+2))$.
Note $v_0,v_2,v_3\in A_2=\{(a, (x, x+4y+2)) : a\in\{0,2,3\}, x,y \in \mathbb{Z}\}$. 
Now observe that there exists vertex $v=(1, (x, x+4y+2))$ in $T$, see Figure~\ref{fig:31pf2} for an illustration.

Then the vertices $v_0,v_2,v_3$ lie within a distance of $2$ from the broadcasting vertex $v=(1,(x,x+4y+2))$, and thus receive reception at least $1$ from that vertex. Thus, as $x,y$ were arbitrary, all of the vertices in $A_2$ are dominated.

\item Fix arbitrary integers $x,y\in\Z$. 
To begin consider 
a vertex $v_2=(2,(x,x+4y+3))$ in the set $A_3$.
Observe that the vertex $v=(1,(x,x+4y+2))$ is a broadcasting vertex in $T$.
Now notice that the distance from $v$ to $v_2$ is computed as follows: $v_2$ is the bottom vertex of a $4$-cycle, while $v$, the broadcasting vertex, is the right vertex of the $4$-cycle immediately below the $4$-cycle containing $v_2$, see Figure~\ref{fig:success}. 
Thus the distance   is exactly $2$, and $v_2$ receives signal $1$ from $v$, and hence is dominated.

We now consider the vertices $v_0=(0,(x,x+4y+3))$, $v_1=(1,(x,x+4y+3))$, and $v_3=(3,(x,x+4y+3))$. 
Consider the broadcasting vertex located at position 
$v=(2,(x,x+4y+4))=(2,(x,x+4(y+1)))$ which is in $T$.
Note that the broadcasting vertex $v$ is the bottom vertex in the $4$-cycle above the $4$-cycle that contains the vertices $v_0,v_1,v_3$, see Figure~\ref{fig:success2} for an illustration. 
Note that the distance between the broadcasting vertex $v$ and the vertices $v_0,v_1,$ and $v_3$ is at most $2$, hence these vertices receive signal $1$ from the broadcasting vertex $v$, and hence are dominated.

Thus, as $x,y$ were arbitrary,
all of the vertices in the set $A_3$ are dominated.

\item 
Fix arbitrary integers $x,y\in\Z$. 
To begin consider 
a vertex $v_1=(1,(x,x+4y+1))$ in the set $A_4$.
Observe that the vertex $v=(2,(x+1,x+1+4y))$ is a broadcasting vertex in $T$.
Now notice that the distance from $v$ to $v_1$ is computed as follows: $v_1$ is the right vertex of a $4$-cycle, while $v$, the broadcasting vertex, is the bottom vertex of the $4$-cycle immediately to the right of the $4$-cycle containing $v_1$, see Figure~\ref{fig:success3} for an illustration. 
Thus, the distance  is exactly $2$, and $v_1$ receives signal $1$ from the broadcasting vertex $v$, and hence is dominated.

We now consider the vertices $v_0=(0,(x,x+4y+1))$, $v_2=(2,(x,x+4y+1))$, and $v_3=(3,(x,x+4y+1))$. 
Consider the broadcasting vertex located at position 
$v=(1,(x-1,x+4y+1))=(1,(x-1,x-1+4y+2))$ which is in $T$.
Note that the broadcasting vertex $v$ is the right vertex in the $4$-cycle immediately to the left of the $4$-cycle that contains the vertices $v_0,v_2,v_3$, see Figure~\ref{fig:success4} for an illustration. 
Note that the distance between the broadcasting vertex $v$ and the vertices $v_0,v_2,$ and $v_3$ is at most $2$, hence these vertices receive signal $1$ from the broadcasting vertex $v$, and hence are dominated.

Thus, as $x,y$ were arbitrary,
all of the vertices in the set $A_4$ are dominated.

\end{enumerate}
Thus, $T$ is a $(3,1)$ broadcast for $\gH$.
Next we determine the proportion of vertices in $\gH$ that are in $T$.

To compute this proportion, we note that among any row of $4$-cycles containing the vertices of $\gH$, out of every two $4$-cycles, one vertex is a broadcasting vertex, see Figure~\ref{fig:rowsfor31}. Since every $4$-cycle consists of $4$ vertices, then we have that one of every eight vertices is selected to be a broadcasting vertex. 
Thus $\delta_{3,1}(\gH)\leq \frac{1}{8}$, as claimed.
\end{proof}

\begin{theorem}\label{thm:density32}
The optimal density of a $(3,2)$ broadcast for $\gH$ satisfies
    \[\delta_{3,2}(H_{\infty,\infty})\leq \frac{1}{6}.\]
\end{theorem}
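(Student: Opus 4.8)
The plan is to mirror the structure of the preceding density theorems (Theorems~\ref{thm:density21}--\ref{thm:density31}): exhibit an explicit periodic set $T\subseteq V(\gH)$, verify that it is a $(3,2)$ broadcast by a finite case analysis, and then read off its density as $\tfrac16$. Since the $(3,1)$ construction of Theorem~\ref{thm:density31} used broadcasting vertices spaced with period $4$ in the ``$x+4y$'' direction to achieve density $\tfrac18$, and $\tfrac16=\tfrac43\cdot\tfrac18$, I would look for a construction that is $\tfrac43$ times as dense, i.e.\ two periodic families each contributing one broadcasting vertex per three $4$-cycles. Concretely, I would take
\[
T=\{(a_1,(x,x+3y)):x,y\in\Z\}\cup\{(a_2,(x,x+3y+s)):x,y\in\Z\}
\]
for a suitable choice of clock-positions $a_1,a_2\in\Z_4$ and offset $s\in\{0,1,2\}$, tuning these parameters so that the two families interleave and their reach regions $P_v(3)$ jointly give every vertex reception at least $2$. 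A single such family has density $\tfrac1{12}$ (for each fixed $x$, exactly the $4$-cycles with $y\equiv x\pmod 3$ carry a broadcasting vertex, one in three, and each $4$-cycle has four vertices), so the union has density $\tfrac16$.

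The reception verification is the heart of the argument. Each broadcasting vertex has strength $t=3$, so it contributes reception $3$ to itself, $2$ to each vertex at distance $1$, and $1$ to each vertex at distance $2$, and nothing beyond; broadcasting vertices are therefore automatically dominated. For the remaining vertices I would partition $V(\gH)\setminus T$ into finitely many families indexed by the clock-position $a\in\Z_4$ and the residue of the row-coordinate modulo $3$, exactly as the $A_i$ were defined in the proof of Theorem~\ref{thm:density31}. For each family I would name the one or two nearest broadcasting vertices and compute the graph distance in the tiling. The key point, forced by the coordination sequence $1,3,5,\dots$ (only $3$ vertices lie at distance $1$), is that most non-broadcasting vertices cannot sit within distance $1$ of any single broadcasting vertex; such a vertex must instead accumulate reception $1+1=2$ from \emph{two} broadcasting vertices each at distance exactly $2$, or $2+1$ from one at distance $1$ together with one at distance $2$. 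Establishing that every family admits the required one-or-two sources reduces to a finite list of local distance computations within and between adjacent $4$-cycles, of the same type already carried out (e.g.\ ``the bottom vertex of a $4$-cycle and the right vertex of the $4$-cycle immediately below it are at distance $2$'') in the analysis of cases (3) and (4) in the proof of Theorem~\ref{thm:density31}.

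The main obstacle is precisely this distance bookkeeping: because the tiling metric is neither Euclidean nor the taxicab metric, and the reach region $P_v(3)$ is non-convex (cf.\ Figure~\ref{fig:P(5)}), I must verify that the two families are offset so that no vertex is left receiving only reception $1$ from a single distance-$2$ source. The heuristic count that a density-$\tfrac16$ placement delivers on average $\tfrac16\bigl(3+3\cdot2+5\cdot1\bigr)=\tfrac{14}{6}>2$ units of signal per vertex shows the budget is adequate but tight, so the offset $s$ and positions $a_1,a_2$ must be chosen with care; the bulk of the write-up would be the enumeration establishing that the chosen parameters leave no under-covered vertex, followed by the short density computation yielding $\delta_{3,2}(\gH)\le\tfrac16$.
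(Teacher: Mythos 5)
Your overall strategy is the same as the paper's: exhibit an explicit periodic set $T$, verify by a finite case analysis (organized by clock position in $\Z_4$ and row residue) that every vertex accumulates reception at least $2$ from at most two nearby broadcasting vertices, and then read off the density from the periodicity. Your template, however, differs from the paper's actual construction. The paper uses \emph{four} families with period $6$ in the diagonal direction, namely $T_1=\{(3,(x,x+6y))\}$, $T_2=\{(1,(x,x+6y+2))\}$, $T_3=\{(0,(x,x+6y+3))\}$, $T_4=\{(2,(x,x+6y+5))\}$; along a fixed residue class of rows modulo $3$ the clock position alternates (positions $3$ and $0$ on rows $\equiv 0 \bmod 3$, positions $1$ and $2$ on rows $\equiv 2 \bmod 3$), so the paper's $T$ is genuinely period $6$ and cannot be written in your two-family period-$3$ form. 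Your ansatz is nevertheless realizable: taking $a_1=1$, $a_2=0$, $s=1$, i.e.
\[
T=\{(1,(x,x+3y)):x,y\in\Z\}\cup\{(0,(x,x+3y+1)):x,y\in\Z\},
\]
a short check of the same kind the paper performs succeeds. On a square carrying the right-vertex broadcaster, vertices $0$ and $2$ get $2$ at distance $1$ and vertex $3$ gets $1+1$ (own broadcaster plus the top vertex of the square to the left); on a square carrying the top-vertex broadcaster, vertices $1$ and $3$ get $2$ and vertex $2$ gets $1+1$ (own broadcaster plus the right vertex of the square below); on the empty diagonal class, vertices $2$ and $3$ are at distance $1$ from the broadcasters below and to the left, respectively, while vertices $0$ and $1$ each collect $1+1$ from two broadcasters at distance exactly $2$. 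This yields a correct proof with arguably fewer cases than the paper's six, and your density count ($2$ broadcasters per three $4$-cycles, hence $\tfrac{2}{12}=\tfrac16$) is right.

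The caveat is that, as written, your proposal defers exactly the step that constitutes the proof: you never fix $(a_1,a_2,s)$, and the tuning is substantive rather than cosmetic. For example, $s\equiv 0$ fails for \emph{every} choice of $a_1,a_2$: with both families on the same diagonal class, the top vertex of a square on an empty class can be reached within distance $2$ only by the left vertex of the occupied square to its right, so it receives at most reception $1$; and not every $(a_1,a_2)$ works when $s\equiv 1$ either (e.g.\ $a_1=0$ leaves the top vertex of the empty-class squares under-served). So until the parameters are pinned down and the distance bookkeeping carried out, what you have is a correct and well-motivated plan — with an accurate budget heuristic $\tfrac16(3+3\cdot 2+5\cdot 1)=\tfrac{14}{6}>2$ — rather than a proof; with the parameters above it completes along precisely the lines you describe.
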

\begin{proof}
We construct a $(t,r)$ broadcast $T$ for $H_{\infty,\infty}$ consisting of the subset of vertices 
\[T = T_1\cup T_2\cup T_3 \cup T_4, \]
    where 
    \begin{align*}
    T_1&=\{(3, (x, x+6y)): x,y\in \Z\}, \\ 
    T_2&=\{(1, (x, x+6y+2)): x,y\in \Z\},  \\
    T_3&=\{(0, (x, x+6y+3)): x,y\in \Z\}, \mbox{ and}\\
    T_4&=\{(2, (x, x+6y+5)): x,y\in \Z\}.
    \end{align*}

We illustrate the positioning of the vertices in the set $T$ in Figure~\ref{fig:rowsfor32} and in this proof we establish that the density of this broadcast is $\frac16$. To begin we prove that $T$ is a $(3,2)$ broadcast for $\gH$.

\begin{figure}[h!]
    \centering
\includegraphics[width=4in,trim=1.5cm 1.5cm 1.5cm 1.5cm,clip]{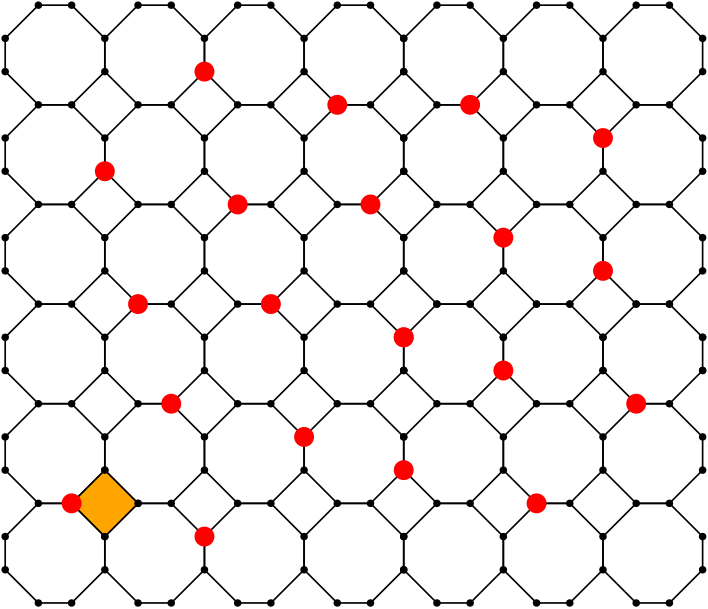}
    \caption{The position of the broadcasting vertices  in the $(3,2)$ broadcast for $\gH$ used in the proof of Theorem~\ref{thm:density32}, which has density $\frac{1}{6}$.}
    \label{fig:rowsfor32}
\end{figure}

Because each broadcasting vertex in $T$ receives reception $3$ from itself, we focus only on the vertices in the set $V(H_{\infty, \infty}) \setminus T$.
This set consists of the vertices in 
\[A_1\cup A_2\cup A_3\cup A_4 \cup A_5 \cup A_6,\]
where
\begin{align*}
A_1&=\{(a, (x, x+6y)) : a\in\{0,1,2\}, x,y \in \mathbb{Z}\},\\
A_2&=\{(a, (x, x+6y+2)) : a\in\{0,2,3\}, x,y \in \mathbb{Z}\},\\
A_3&=\{(a, (x, x+6y+3)) : a\in\{1,2,3\}, x,y \in \mathbb{Z}\},\\
A_4&=\{(a, (x, x+6y+5)) : a\in\{0,1,3\}, x,y \in \mathbb{Z}\},\\
A_5&= \{(a, (x, x+6y + 1)) :  a\in\{0,1,2,3\}, x,y \in \mathbb{Z}\}\mbox{, and}\\
A_6&= \{(a, (x, x+6y+4)) : a\in\{0,1,2,3\}, x,y \in \mathbb{Z} \}.
\end{align*}
We now proceed via a case-by-case analysis to show that for each $1\leq i\leq 6$, if $v\in A_i$, then it receives reception at least $2$.

\begin{figure}[h!]
    \centering
\begin{subfigure}[t]{.4\textwidth}
\centering
    \includegraphics[width=1.75in, trim=2.25cm 1.5cm 1.75cm 2cm,clip]{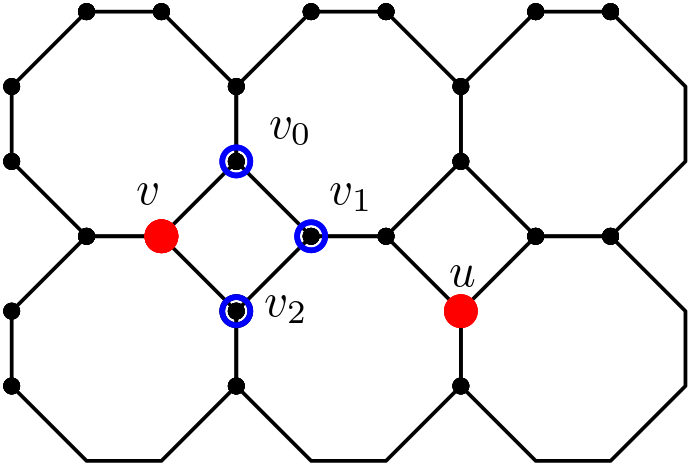}
\caption{The vertices $v_0$ and $v_2$ are dominated by the broadcasting vertex at vertex $v$, while vertex $v_1$ gets reception total equal to two, with reception one coming from each of the broadcasting vertices $u$ and $v$.}
    \label{fig:32pf1}
    \end{subfigure}
    \qquad
\begin{subfigure}[t]{.4\textwidth}
    \centering
    \includegraphics[width=1.75in,trim=2cm 2cm 2cm 2cm, clip]{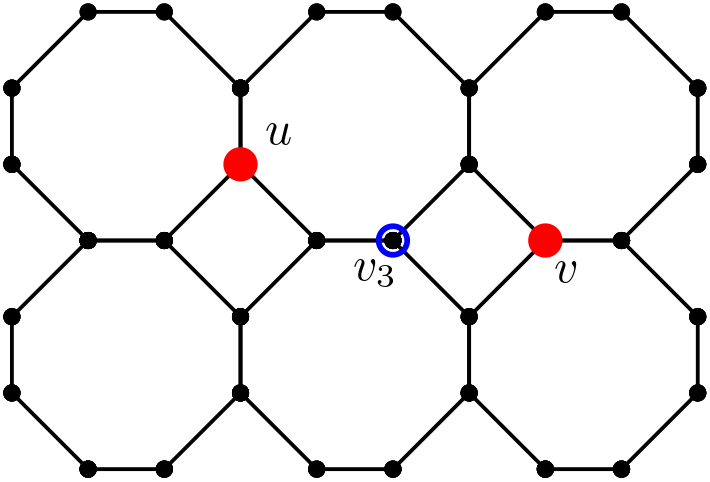}
    \caption{The vertex $v_3$ has reception total equal to two, with reception one coming from each of the broadcasting vertices $u$ and $v$.}
    \label{fig:32pf2}
\end{subfigure}\\
\begin{subfigure}[t]{.4\textwidth}
    \centering
\includegraphics[width=1in, trim=2cm 2cm 2cm 2cm, clip]{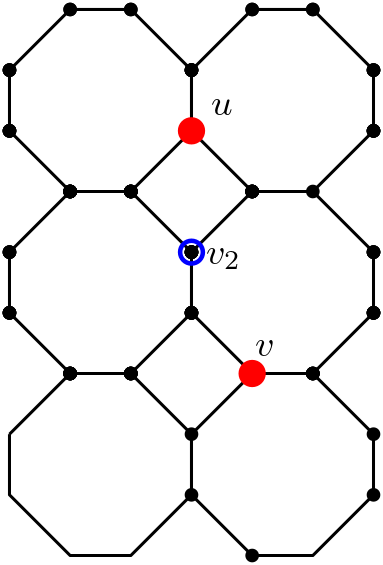}
    \caption{The vertex $v_2$ has reception total equal to two, with reception one coming from each of the broadcasting vertices $u$ and~$v$.}
    \label{fig:32pf3}
\end{subfigure}
\qquad
\begin{subfigure}[t]{.4\textwidth}
    \centering
\includegraphics[width=1in, trim=1.65cm 1.65cm 1.65cm 1.65cm, clip]{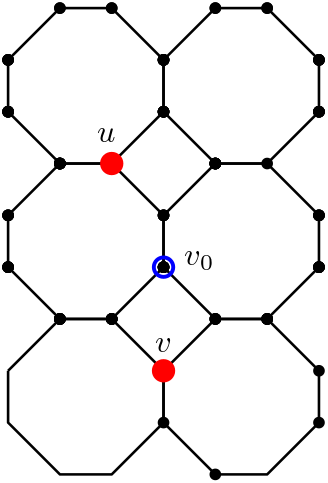}
    \caption{The vertex $v_0$ has reception total equal to two, with reception one coming from each of the broadcasting vertices $u$ and~$v$.}
    \label{fig:32pf4}
\end{subfigure}
\\
\begin{subfigure}[t]{.4\textwidth}
    \centering
\includegraphics[width=2in,trim=3cm 1.5cm 3cm 1.5cm, clip]{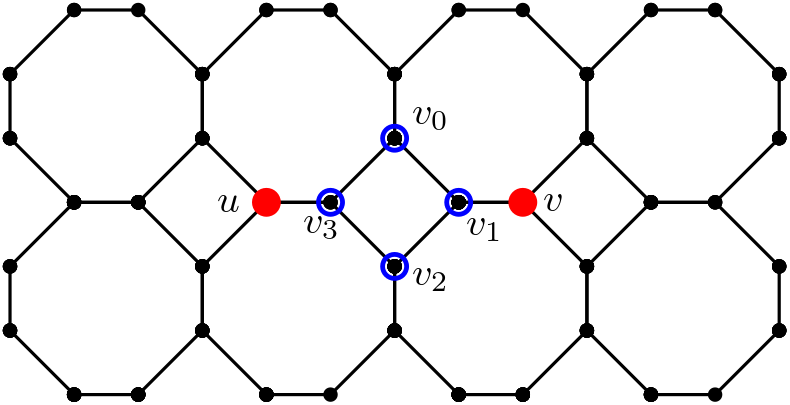}
    \caption{The vertex $v_1$ is dominated by the broadcasting vertex at vertex $v$, $v_3$ is dominated by the broadcasting vertex at vertex $u$, while vertices $v_0$ and $v_2$ get reception total equal to two, with reception one coming from each of the broadcasting vertices $u$ and $v$.}
    \label{fig:fig5for32}
\end{subfigure}
\qquad
\begin{subfigure}[t]{.4\textwidth}
    \centering
\includegraphics[width=1in,trim=.75cm 1.5cm .75cm 1.5cm, clip]
{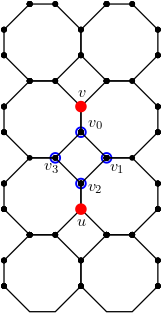}
    \caption{The vertex $v_0$ is dominated by the broadcasting vertex at vertex $v$, $v_2$ is dominated by the broadcasting vertex at vertex $u$, while vertices $v_1$ and $v_3$ get reception total equal to two, with reception one coming from each of the broadcasting vertices $u$ and $v$.}
    \label{fig:fig6for32}
\end{subfigure}

 \caption{Illustrations with $(t,r)=(3,2)$ for cases (1)-(6) in the proof of Theorem~\ref{thm:density32}.}
\end{figure}

\begin{enumerate}
    \item 
Fix arbitrary integers $x,y\in\Z$. 
First consider the vertices $v_0=(0,(x,x+6y))$ and $v_2=(2,(x,x+6y))$.
Note $v_0,v_2\in A_1=\{(a, (x, x+6y)) : a\in\{0,1,2\}, x,y \in \mathbb{Z}\}$. 
Now observe that there exists vertex $v=(3, (x, x+6y))$ in $T$. 
Then the vertices $v_0$ and $v_2$ lie a distance of $1$ from the broadcasting vertex $v=(3, (x, x+6y))$, and thus receive reception $2$ from that vertex. 
We now consider a vertex $v_1=(1,(x,x+6y))$. Note that $v_1 \in A_1$. 
Also notice that $v_1$ is a distance of $2$ from the broadcasting vertex $v=(3, (x, x+6y))$, and thus receives reception $1$ from the broadcasting vertex $v$. Hence, $v_1$ is dominated. 

Further observe that there exists vertex $u=(2,(x+1,x+6y))$ in $T$. 
This follows because if we substitute $x=x'-1$ and $y=y'+1$, we note that the location of the $4$-cycle at position \begin{align*}
    (x+1,x+6y)&=((x'-1)+1,(x'-1)+6(y'+1))=(x',x'+6y'+5).
\end{align*}
Then $(2,(x+1,x+6y))=(2,(x',x'+6y'+5))\in T_4\subseteq T$.

The distance from $u$ to $v_1$ is computed as follows: $v_1$ is the right vertex of a $4$-cycle, while $u$, the broadcasting vertex, is the bottom vertex of the $4$-cycle immediately to the right of the $4$-cycle containing $v_1$, see Figure~\ref{fig:32pf1} for an illustration. 
Thus the distance is exactly $2$, and $v_1$ receives signal $1$ from the broadcasting vertex $u$. Then $v_1$ receives reception at least $2$.
Thus, as $x,y$ were arbitrary, all of the vertices in $A_1$ are dominated.

   \item 
Fix arbitrary integers $x,y\in\Z$. 
First consider the vertices $v_0=(0,(x,x+6y+2))$ and $v_2=(2,(x,x+6y+2))$.
Note $v_0,v_2\in A_2=\{(a, (x, x+6y+2)) : a\in\{0,2,3\}, x,y \in \mathbb{Z}\}$. 
Now observe that there exists vertex $v=(1, (x, x+6y+2))$ in $T$. 
Then the vertices $v_0$ and $v_2$ lie a distance of $1$ from the broadcasting vertex $v=(1, (x, x+6y+2))$, and thus receive reception $2$ from that vertex. 
We now consider the vertex $v_3=(3,(x,x+6y+2))$. Note that $v_3 \in A_2$. 
Also notice that $v_3$ is a distance of $2$ from the broadcasting vertex $v=(1, (x, x+6y+2))$ (which is in the same $4$-cycle), and thus receives reception $1$ from the broadcasting vertex $v$. Hence $v_3$ is dominated. 

Further observe that the vertex $u=(0,(x-1,x+6y+2))$ is in $T$ since substituting $x=x'+1$ and $y=y'$, we have that 
\[
    (x-1,x+6y+2)=((x'+1)-1,(x'+1)+6y'+2)=(x',x'+6y'+3).
\]
Then $(0,(x-1,x+6y+2))=(0,(x',x'+6y'+3))\in T_3\subseteq T$.

The distance from $u=(0,(x-1,x+6y+2))$ to $v_3$ is computed as follows: $v_3$ is the left vertex of a $4$-cycle, while $u$, the broadcasting vertex, is the top vertex of the $4$-cycle immediately to the left of the $4$-cycle containing $v_3$, see Figure~\ref{fig:32pf2} for an illustration. 
Thus the distance is exactly $2$, and $v_3$ receives signal $1$ from the broadcasting vertex $u$. Then $v_3$ receives reception $1$ from the broadcasting vertex $v$ and reception $1$ from the broadcasting vertex $u$. As reception is additive, $v_3$ receives reception at least $2$, and is dominated.

Thus, as $x,y$ were arbitrary, all of the vertices in $A_2$ are dominated.

   \item 
Fix arbitrary integers $x,y\in\Z$. 
First consider the vertices $v_1=(1,(x,x+6y+3))$ and $v_3=(3,(x,x+6y+3))$.
Note $v_1,v_3\in A_3=\{(a, (x, x+6y+3)) : a\in\{1,2,3\}, x,y \in \mathbb{Z}\}$. 
Now observe that there exists vertex $v=(0, (x, x+6y+3))$ in $T$. 
Then the vertices $v_1$ and $v_3$ lie a distance of $1$ from the broadcasting vertex $v=(0, (x, x+6y+3))$, and thus receive reception $2$ from that vertex. 
We now consider the vertex $v_2=(3,(x,x+6y+2))$. Note that $v_2 \in A_3$. 
Also notice that $v_2$ is a distance of $2$ from the broadcasting vertex $v=(0, (x, x+6y+3))$ (which is in the same $4$-cycle), and thus receives reception $1$ from the broadcasting vertex $v$. 

Further observe that the vertex $u=(1,(x,x+6y+3-1))$ is in $T$ since substituting $x=x'$ and $y=y'+1$, we have that 
\begin{align*}
    (x,x+6y+3-1)&=(x',x'+6(y'+1)+3-1)
    =(x', x'+6(y'+1)+2).
\end{align*}
Then $(1,(x,x+6y+3-1))=(1,(x', x'+6(y'+1)+2)\in T_2\subseteq T$.

The distance from $u=(1,(x,x+6y+2))$ to $v_2$ is computed as follows: $v_2$ is the bottom vertex of a $4$-cycle, while $u$, the broadcasting vertex, is the right vertex of the $4$-cycle immediately below the $4$-cycle containing $v_2$, see Figure~\ref{fig:32pf3} for an illustration. 
Thus the distance is exactly $2$, and $v_2$ receives signal $1$ from the broadcasting vertex $u$. 
Then $v_2$ receives reception $1$ from the broadcasting vertex $v$ and reception $1$ from the broadcasting vertex $u$. As reception is additive, $v_2$ receives reception at least $2$, and is dominated.

Thus, as $x,y$ were arbitrary, all of the vertices in $A_3$ are dominated.

   \item 
Fix arbitrary integers $x,y\in\Z$. 
First consider the vertices $v_1=(1,(x,x+6y+5))$ and $v_3=(3,(x,x+6y+5))$.
Note $v_1,v_3\in A_4=\{(a, (x, x+6y+5)) : a\in\{0,1,3\}, x,y \in \mathbb{Z}\}$. 
Now observe that there exists vertex $v=(2, (x, x+6y+5))$ in $T$. 
Then the vertices $v_1$ and $v_3$ lie a distance of $1$ from the broadcasting vertex $v=(2, (x, x+6y+5))$, and thus receive reception $2$ from that vertex. 
We now consider the vertex $v_0=(3,(x,x+6y+2))$. Note that $v_0 \in A_4$. 
Also notice that $v_0$ is a distance of $2$ from the broadcasting vertex $v=(2, (x, x+6y+5))$ (which is in the same $4$-cycle), and thus receives reception $1$ from the broadcasting vertex $v$. 

Further observe that the vertex $u=(3,(x,x+6y+5+1))$ is in $T$ since substituting $x=x'$ and $y=y'-1$, we have that 
\begin{align*}
    (x,x+6y+5+1)&=(x',x'+6(y'-1)+5+1)
    =(x', x'+6y').
\end{align*}
Then $(3,(x,x+6y+6))=(3,(x', x'+6y'))\in T_1\subseteq T$.

The distance from $u=(3,(x,x+6y+6))$ to $v_0$ is computed as follows: $v_0$ is the top vertex of a $4$-cycle, while $u$, the broadcasting vertex, is the left vertex of the $4$-cycle immediately above the $4$-cycle containing $v_0$, see Figure~\ref{fig:32pf4} for an illustration. 
Thus, the distance is exactly $2$, and $v_0$ receives signal one from the broadcasting vertex~$u$. 

Then $v_0$ receives reception $1$ from the broadcasting vertex $v$ and reception $1$ from the broadcasting vertex $u$. As reception is additive, $v_0$ receives reception at least $2$, and is dominated.

Thus, as $x,y$ were arbitrary, all of the vertices in $A_4$ are dominated.

   \item 
Fix arbitrary integers $x,y\in\Z$. 
First consider the vertex $v_1=(1,(x,x+6y+1))$.
Note $v_1\in A_5=\{(a, (x, x+6y+1)) : a\in\{0,1,2,3\}, x,y \in \mathbb{Z}\}$. 
Now observe that the vertex $v=(3, (x+1,x+6y+1))$ is in $T$, since substituting $x=x'-1$ and $y=y'$, we have that 
\begin{align*}
    (x+1,x+6y+1)&=((x'-1)+1,(x'-1)+6y'+1)=(x', x'+6y').
\end{align*}
Then $(3,(x+1,x+6y+1))=(3,(x', x'+6y'))\in T_1\subseteq T$.
Notice $v_1$ is the right vertex of a $4$-cycle, and $v$ is the left vertex of the $4$-cycle immediately to the right, see Figure~\ref{fig:fig5for32} for an illustration. Then the vertex $v_1$ is a distance of $1$ from the broadcasting vertex $v=(3, (x+1,x+6y+1))$, and thus receives reception $2$ from that vertex.

We now consider the vertex $v_3=(3,(x,x+6y+1))$.
Note $v_3\in A_5=\{(a, (x, x+6y+1)) : a\in\{0,1,2,3\}, x,y \in \mathbb{Z}\}$. 
Now observe that the vertex $u=(1, (x-1,x+6y+1))$ is in $T$, since substituting $x=x'+1$ and $y=y'$, we have that 
\begin{align*}
    (x-1,x+6y+1)&=((x'+1)-1,(x'+1)+6y'+1)=(x', x'+6y'+2).
\end{align*}
Then $(3,(x-1,x+6y+1))=(3,(x', x'+6y'+2))\in T_2\subseteq T$.
Then the vertex $v_3$ is a distance of $1$ from the broadcasting vertex $u=(1, (x-1,x+6y+1))$, and thus receives reception $2$ from that vertex. 

We now consider the vertex $v_0=(0,(x,x+6y+1))$. Note that $v_0 \in A_5$. 
Also notice that $v_0$ is a distance of $2$ from the broadcasting vertex $v=(3, (x+1,x+6y+1))$, since $v_0$ is the top vertex in a $4$-cycle and $v$ is the left vertex of the $4$-cycle immediately to the right of the $4$-cycle containing $v_0$, see Figure~\ref{fig:fig5for32} for an illustration. 
Thus $v_0$ receives reception $1$ from the broadcasting vertex $v$.
Note additionally that $v_0$ is a distance of $2$ from the broadcasting vertex $u=(1, (x-1,x+6y+1))$, and $u$ is the right vertex in the $4$-cycle immediately to the left of the $4$-cycle containing $v_0$, see again Figure~\ref{fig:fig5for32} for an illustration. 
Then $v_0$ receives reception $1$ from the broadcasting vertex $v$ and reception $1$ from the broadcasting vertex $u$. As reception is additive, $v_0$ receives reception at least $2$, and is dominated.

Similarly, consider the vertex $v_2=(2,(x,x+6y+1))$. Note $v_2 \in A_5$. 
Also note that $v_2$ is a distance of $2$ from the broadcasting vertex $v=(3, (x+1,x+6y+1))$, since $v_2$ is the bottom vertex in a $4$-cycle and $v$ is the left vertex of the $4$-cycle immediately to the right of the $4$-cycle containing $v_2$, see Figure~\ref{fig:fig5for32} for an illustration. 
Thus $v_2$ receives reception $1$ from the broadcasting vertex $v$.
Note additionally that $v_2$ is a distance of $2$ from the broadcasting vertex $u=(1, (x-1,x+6y+1))$, and $u$ is the right vertex in the $4$-cycle immediately to the left of the $4$-cycle containing $v_2$,  see gain Figure~\ref{fig:fig5for32} for an illustration. 
Then $v_2$ receives reception $1$ from the broadcasting vertex $v$ and reception $1$ from the broadcasting vertex $u$. As reception is additive, $v_2$ receives reception at least $2$, and is dominated.

Thus, as $x,y$ were arbitrary, all of the vertices in $A_5$ are dominated.

\item
Fix arbitrary integers $x,y\in\Z$. 
First consider the vertex $v_0=(0,(x,x+6y+4))$.
Note $v_0\in A_6=\{(a, (x, x+6y+4)) : a\in\{0,1,2,3\}, x,y \in \mathbb{Z}\}$. 
Now observe that the vertex $v=(2, (x,x+6y+4+1))$ is in $T$, since substituting $x=x'$ and $y=y'-1$, we have that 
\begin{align*}
    (x,x+6y+4+1)&=(x',x'+6(y'-1)+4+1)=(x',x'+6(y'-1)+5).
\end{align*}
Then  $(2, (x,x+6y+5))=(2,(x',x'+6(y'-1)+5))\in T_4\subseteq T$.
Notice $v_0$ is the top vertex of a $4$-cycle, and $v$ is the bottom vertex of the $4$-cycle immediately above, see Figure~\ref{fig:fig6for32} for an illustration. 
Then the vertex $v_0$ is a distance of $1$ from the broadcasting vertex $v=(2, (x,x+6y+2))$, and thus receives reception $2$ from that vertex. 

We now consider the vertex $v_2=(2,(x,x+6y+4))$.
Note $v_2\in A_6=\{(a, (x, x+6y+4)) : a\in\{0,1,2,3\}, x,y \in \mathbb{Z}\}$. 
Now observe that the vertex $u=(0, (x,x+6y+4-1))$ is in $T$, since substituting $x=x'$ and $y=y'+1$, we have that 
\begin{align*}
    (x,x+6y+4-1)&=(x',x'+6(y'+1)+4-1)=(x',x'+6(y'+1)+3).
\end{align*}
Then $(3,(x,x+6y+4-1))=(3,(x',x'+6(y'+1)+3))\in T_3\subseteq T$.
Notice $v_2$ is the bottom vertex of a $4$-cycle, and $u$ is the top vertex of the $4$-cycle immediately below, see Figure~\ref{fig:fig6for32} for an illustration. 
Then the vertex $v_2$ is a distance of $1$ from the broadcasting vertex $u=(0, (x,x+6y+3))$, and thus receives reception $2$ from that vertex. 

We now consider the vertex $v_1=(1,(x,x+6y+4))$. Note that $v_1 \in A_6$. 
Also notice that $v_1$ is a distance of $2$ from the broadcasting vertex $v=(2, (x,x+6y+2))$, since $v_1$ is the right vertex in a $4$-cycle and $v$ is the bottom vertex of the $4$-cycle immediately above the $4$-cycle containing $v_1$, see Figure~\ref{fig:fig6for32} for an illustration. 
Thus $v_1$ receives reception $1$ from the broadcasting vertex $v$.
Note additionally that $v_1$ is a distance of $2$ from the broadcasting vertex $u=(0, (x,x+6y+3))$, since $u$ is the top vertex in the $4$-cycle immediately below the $4$-cycle containing $v_1$, see again Figure~\ref{fig:fig6for32} for an illustration. 
Then $v_1$ receives reception $1$ from the broadcasting vertex $v$ and reception $1$ from the broadcasting vertex $u$. As reception is additive, $v_1$ receives reception at least $2$, and is dominated.

Similarly, consider the vertex $v_3=(3,(x,x+6y+4))$. Note $v_3 \in A_6$. 
Also note that $v_3$ is a distance of $2$ from the broadcasting vertex $v=(2, (x,x+6y+2))$, since $v_3$ is the left vertex in a $4$-cycle and $v$ is the bottom vertex of the $4$-cycle immediately above the $4$-cycle containing $v_1$, see Figure~\ref{fig:fig6for32} for an illustration. 
Thus $v_3$ receives reception $1$ from the broadcasting vertex $v$.
Note additionally that $v_3$ is a distance of $2$ from the broadcasting vertex $u=(0, (x,x+6y+3))$, since $u$ is the top vertex in the $4$-cycle immediately below the $4$-cycle containing $v_3$, see again Figure~\ref{fig:fig6for32} for an illustration. 
Then $v_3$ receives reception $1$ from the broadcasting vertex $v$ and reception $1$ from the broadcasting vertex $u$. As reception is additive, $v_3$ receives reception at least $2$, and is dominated.

Thus, as $x,y$ were arbitrary, all of the vertices in $A_6$ are dominated.
\end{enumerate}
Thus, $T$ is a $(3,2)$ broadcast for $\gH$.
Next we determine the proportion of vertices in $\gH$ that are in $T$.

To compute this proportion, we note that among any row of $4$-cycles containing the vertices of $\gH$, every three $4$-cycles, there are two broadcasting vertices, see Figure~\ref{fig:rowsfor32} for an illustration. 
Since every $4$-cycle consists of $4$ vertices, we then have that among every $12$ vertices, two are selected to be broadcasting vertices. 
Thus $\delta_{3,1}(\gH)\leq \frac{2}{12}=\frac{1}{6}$, as claimed.
\end{proof}

\begin{theorem}\label{thm:density33}
The optimal density of a $(3,3)$ broadcast for $\gH$ satisfies
    \[\delta_{3,3}(H_{\infty,\infty})\leq \frac14.\]
\end{theorem}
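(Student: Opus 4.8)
The plan is to follow the template of the preceding density theorems: exhibit an explicit $(3,3)$ broadcast $T\subseteq V(\gH)$, check by a short case analysis that every vertex has reception at least $3$, and then read off the density. The construction I would use is the uniform one that places a single broadcasting vertex at the top of every $4$-cycle, namely
\[T=\{(0,(x,y)):x,y\in\Z\}.\]
Because $T$ contains exactly one of the four vertices of each $4$-cycle, the density is immediate: one in every four vertices of $\gH$ is a broadcasting vertex, so the broadcast density equals $\tfrac14$, and hence $\delta_{3,3}(\gH)\le\tfrac14$ once validity is established.

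To prove validity, I would first record that with $t=3$ a broadcasting vertex $v$ contributes signal $3-d(u,v)$ to $u$, so it sends $2$ to each vertex at distance $1$ and $1$ to each vertex at distance $2$. Every broadcasting vertex already satisfies $f\ge 3$ from itself, so it suffices to handle $V(\gH)\setminus T=\{(a,(x,y)):a\in\{1,2,3\},\ x,y\in\Z\}$, which I would split into three classes by the clock-label $a$. Fixing arbitrary $x,y\in\Z$, the verification reduces to naming, for each of $(1,(x,y))$, $(2,(x,y))$, and $(3,(x,y))$, the top-vertex broadcasters within distance $2$ and adding their contributions: (i) the right vertex $(1,(x,y))$ is adjacent to the broadcaster $(0,(x,y))$ along a $4$-cycle edge and sits at distance $2$ from the broadcasters $(0,(x,y-1))$ and $(0,(x+1,y))$, for reception $2+1+1=4$; (ii) the left vertex $(3,(x,y))$ is the mirror image, receiving $2$ from $(0,(x,y))$ and $1$ from each of $(0,(x,y-1))$ and $(0,(x-1,y))$, again $4$; and (iii) the bottom vertex $(2,(x,y))$ is adjacent (via the external edge) to the broadcaster $(0,(x,y-1))$ at the top of the $4$-cycle directly below it, and sits at distance $2$ from the broadcaster $(0,(x,y))$ of its own $4$-cycle, for reception $2+1=3$. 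Since $x,y$ are arbitrary, every non-broadcasting vertex then has reception at least $3$, so $T$ is a $(3,3)$ broadcast.

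The step I expect to be the main obstacle is the distance bookkeeping, namely confirming that each named broadcaster really lies at the asserted distance $1$ or $2$. This relies on the adjacency structure of $\gH$ in the fixed coordinates: within a $4$-cycle the labels form the cycle $0$–$1$–$2$–$3$–$0$, while the external edges join the top vertex $(0,(x,y))$ to the bottom vertex $(2,(x,y+1))$ and the right vertex $(1,(x,y))$ to the left vertex $(3,(x+1,y))$. Each distance-$2$ claim would then be realized by an explicit two-edge path, for instance $(1,(x,y))\to(3,(x+1,y))\to(0,(x+1,y))$ for the right vertex and $(2,(x,y))\to(1,(x,y))\to(0,(x,y))$ for the bottom vertex, exactly as in the distance computations of Theorems~\ref{thm:density31} and~\ref{thm:density32}. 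The delicate case is the bottom vertex, whose reception is \emph{exactly} $3$ with no slack; here I would carefully confirm that the two broadcasters listed are its only in-range contributors so that the requirement $r=3$ is met precisely and not undercut. With these distance verifications in hand, the density count of one broadcaster per $4$-cycle completes the proof.
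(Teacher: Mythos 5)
Your proposal is correct and takes essentially the same approach as the paper: the identical broadcast $T=\{(0,(x,y)):x,y\in\Z\}$, the same case split over the clock labels $a\in\{1,2,3\}$ with the same distance-$1$/distance-$2$ bookkeeping along the stated adjacencies, and the same one-broadcaster-per-$4$-cycle count giving density $\frac14$. The only (harmless) differences are that for $a\in\{1,3\}$ you tally a third in-range broadcaster for total reception $4$ where the paper cites just two summing to $3$ (consistent with its later remark on excess signal), and your worry that the bottom vertex's reception could be ``undercut'' is moot, since additional broadcasters can only increase the additive reception.
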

\begin{proof}

We construct a $(3,3)$ broadcast $T$ for $H_{\infty,\infty}$ consisting of the subset of vertices 
\[T = \{(0,(x,y)):x,y\in Z\}.\]
    In Figure~\ref{fig:Delta33}, we provide a construction of a $(3,3)$ broadcast $T$ for $H_{\infty,\infty}$.

Because each broadcasting vertex in $T$ receives reception $3$ from itself, we focus only on the vertices in the set $V(H_{\infty, \infty}) \setminus T$.
This set consists of the vertices in 
\[A_1\cup A_2\cup A_3\]
where
\begin{align*}
A_1&=\{(1, (x, y)) :x,y \in \mathbb{Z}\},\\
A_2&=\{(2, (x, y)) :x,y \in \mathbb{Z}\},\mbox{ and}\\
A_3&=\{(3, (x, y)) :x,y \in \mathbb{Z}\}.\\
\end{align*}
We now proceed via a case-by-case analysis to show that for each $1\leq i\leq 3$, if $v\in A_i$, then it receives reception at least $3$.

\begin{figure}[h!]
    \centering
    \begin{subfigure}[t]{.3\textwidth}
    \centering
\includegraphics[width=1.75in, trim=2cm 2.5cm 2cm 2.5cm,clip]{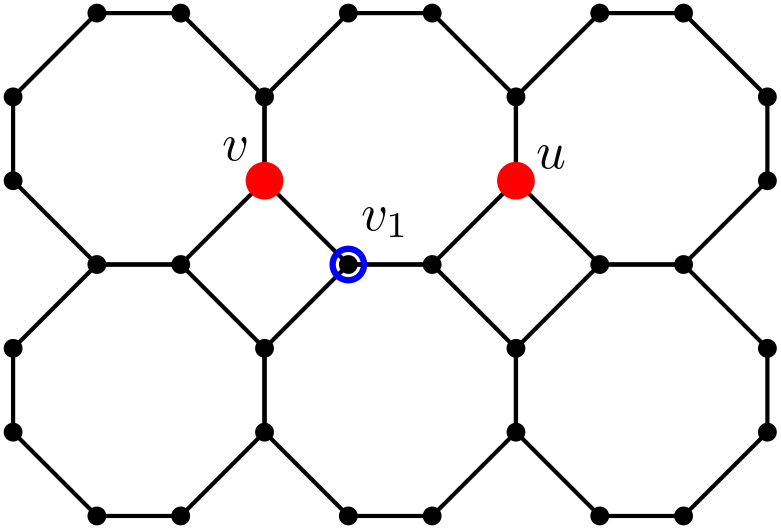}
    \caption{The vertex $v_1$ receives reception $2$ from the broadcasting vertex $v$ and reception $1$ from the broadcasting vertex $u$, hence it is dominated.}
    \label{fig:fig1for33}
\end{subfigure}    
\quad
    \begin{subfigure}[t]{.3\textwidth}
    \centering
\includegraphics[width=1in, trim=1.5cm 3cm 1.5cm 1.5cm,clip]{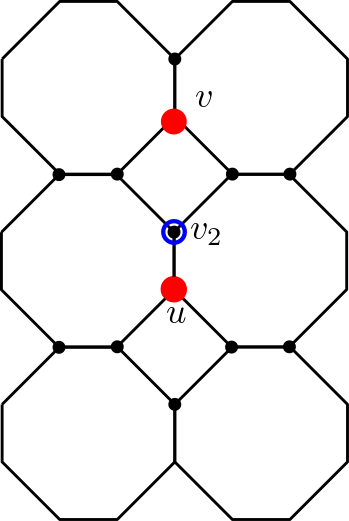}
    \caption{The vertex $v_2$ receives reception $1$ from the broadcasting vertex $v$ and reception $2$ from the broadcasting vertex $u$, hence it is dominated.}
    \label{fig:fig2for33}
\end{subfigure}    
\quad
\begin{subfigure}[t]{.3\textwidth}
    \centering
\includegraphics[width=1.75in, trim=.5cm .65cm .5cm .65cm,clip]{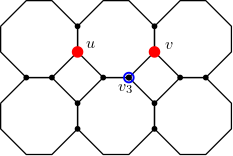}
    \caption{The vertex $v_3$ receives reception $2$ from the broadcasting vertex $v$ and reception $1$ from the broadcasting vertex $u$, hence it is dominated.}
    \label{fig3for33}
\end{subfigure}
\caption{Illustrations with $(t,r)=(3,3)$ for cases (1)-(3) in the proof of Theorem~\ref{thm:density33}.}
\end{figure}

\begin{enumerate}
    \item Fix arbitrary integers $x,y\in\Z$. Consider the vertex $v_1=(1,(x,y))\in A_1$.
The vertices $v=(0, (x, y))$ and $u=(0,(x+1,y))$ are in $T$, and vertex $v_1$ receives reception $2$ from $v$ and reception $1$ from $u$, see Figure~\ref{fig:fig1for33} for an illustration. Thus vertex $v_1$ receives reception at least $3$, as needed.  
As $x,y$ where arbitrary, all of the vertices in $A_1$ are dominated. 

    \item Fix arbitrary integers $x,y\in\Z$. Consider the vertex $v_2=(2,(x,y))\in A_2$.
The vertices $v=(0, (x, y))$ and $u=(0,(x,y-1))$ are in $T$, and vertex $v_2$ receives reception $2$ from $u$ and reception $1$ from $v$, see Figure~\ref{fig:fig2for33} for an illustration. Thus, vertex $v_1$ receives reception at least $3$, as needed.
As $x,y$ where arbitrary, all of the vertices in $A_2$ are dominated.

\item Fix arbitrary integers $x,y\in\Z$. Consider the vertex $v_3=(3,(x,y))\in A_3$.
The vertices $v=(0, (x, y))$ and $u=(0,(x-1,y))$ are in $T$, and vertex $v_3$ receives reception $2$ from $v$ and reception $1$ from $u$, see Figure~\ref{fig3for33} for an illustration. Thus, vertex $v_3$ receives reception at least $3$, as needed.
As $x,y$ where arbitrary, all of the vertices in $A_3$ are dominated. 
\end{enumerate}

Thus, $T$ is a $(3,3)$ broadcast for $\gH$.
Next we determine the proportion of vertices in $\gH$ that are in $T$.

To compute this proportion, we note that among any row of $4$-cycles containing the vertices of $\gH$, one vertex in every $4$ cycle is a broadcasting vertex, see Figure~\ref{fig:rowsfor33} for an illustration. 
Since every $4$-cycle consists of $4$ vertices, then $1$ out of every $4$ vertices in $\gH$ is a broadcasting vertex.
Thus $\delta_{3,3}(\gH)\leq \frac{1}{4}$, as claimed.
\end{proof}

\begin{figure}[h!]
    \centering
\includegraphics[width=4in,trim=1cm 1cm 1cm 1cm,clip]{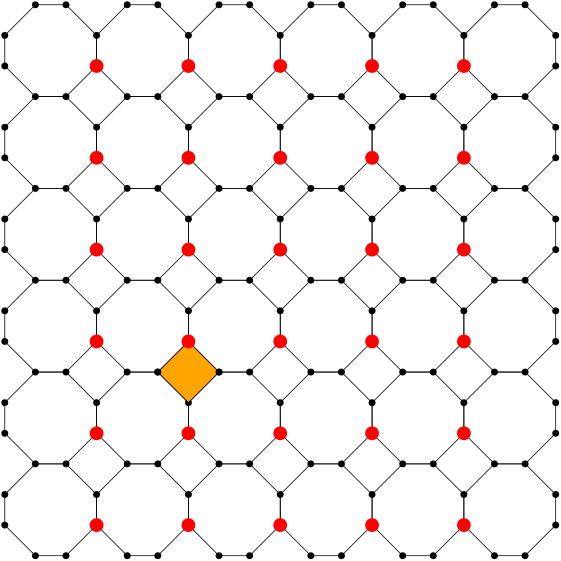}
    \caption{The position of the broadcasting vertices in the $(3,3)$ broadcast for $\gH$ used in the proof of Theorem~\ref{thm:density33}, which has density $\frac14$.}
    \label{fig:rowsfor33}
\end{figure}

\begin{remark}
    Under the $(3,3)$ broadcast presented in Theorem~\ref{thm:density33} every non-broadcasting  vertex receives signal $4$. A similar observation can be made about the $(2,2)$ broadcast in Theorem~\ref{thm:density22}, where every  non-broadcasting vertex receives signal $3$. In \Cref{dobetter} we ask if more efficient broadcasts exist.
\end{remark}

\begin{theorem}
    \label{thm:density41}
The optimal density of a $(4,1)$ broadcast for $\gH$ satisfies
    \[\delta_{4,1}(H_{\infty,\infty})\leq \frac{1}{12}.\]
\end{theorem}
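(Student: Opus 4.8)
The plan is to follow the template of Theorems~\ref{thm:density31} and~\ref{thm:density33}: exhibit an explicit periodic broadcast $T$, verify directly that every vertex of $\gH$ lies within distance $3$ of some vertex of $T$, and then read off the density. The key simplification specific to $(4,1)$ is that a broadcasting vertex $v$ at distance $d(u,v)\le 3$ contributes $4-d(u,v)\ge 1$ to $f(u)$, so the requirement $f(u)\ge 1$ holds precisely when $u$ is within distance $3$ of at least one broadcasting vertex. Thus proving that $T$ is a $(4,1)$ broadcast reduces to showing that the closed balls of radius $3$ about the vertices of $T$ cover $V(\gH)$; no additive bookkeeping across multiple broadcasters is needed.

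For the construction I would take the single diagonal coset
\[
T=\{(0,(x,x+3y)) : x,y\in\Z\},
\]
that is, the top vertex of every $4$-cycle whose coordinate $(X,Y)$ satisfies $Y\equiv X\pmod 3$. Since exactly one third of the $4$-cycles then carry a single broadcasting vertex and each $4$-cycle has four vertices, the broadcast density is $\tfrac{1}{3}\cdot\tfrac{1}{4}=\tfrac{1}{12}$; I would make this rigorous exactly as in Theorem~\ref{thm:density21}, by intersecting $T$ with an expanding family of $H_{n,n}$ subgraphs and taking the limit.

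The substance of the argument is the covering claim, which I would organize by the residue of $Y-X$ modulo $3$ for the $4$-cycle containing the target vertex $u$, using the three adjacency types of $\gH$ (the square edges within a $4$-cycle, the horizontal bridges $(1,(x,y))\!-\!(3,(x+1,y))$, and the vertical bridges $(0,(x,y))\!-\!(2,(x,y+1))$). When $Y-X\equiv 0$ the broadcaster at the cycle itself reaches all four of its vertices within distance $2$. When $Y-X\equiv 1$ the broadcaster at the top of the cycle directly below reaches the bottom vertex at distance $1$ through the vertical bridge, the two side vertices at distance $2$, and the top vertex at distance $3$. The only genuinely tight case is $Y-X\equiv 2$: here the broadcaster one cycle to the left covers vertices $0,2,3$ (at distances $3,3,2$ along a horizontal bridge), while the right vertex $(1,(x,y))$ must instead be reached by the broadcaster at cycle $(x+1,y-1)$ through the length-$3$ path $(0,(x+1,y-1))\!-\!(2,(x+1,y))\!-\!(3,(x+1,y))\!-\!(1,(x,y))$. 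I would present each case with its explicit path, exactly as in the cases of Theorem~\ref{thm:density31}.

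The main obstacle is precisely this last case: one must confirm that the covering radius of the lattice $T$ is exactly $3$, so that the sparsest-covered vertices---the right vertices of the $Y-X\equiv 2$ cycles---are still reached, and that no vertex is left at distance $4$ or more. Because the radius-$3$ ball $P_v(4)$ has $1+3+5+8=17$ vertices by Lemma~\ref{lem:P(t)} and is non-convex, I would guard against an overlooked gap by checking the three residue classes exhaustively against the explicit neighbor relations above, and by cross-validating the covering with the NetworkX code already used for the finite graphs. Everything else---the density computation and the $Y-X\equiv 0,1$ cases---is routine once this adjacency dictionary is fixed.
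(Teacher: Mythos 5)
Your proposal is correct, and it takes a genuinely different route from the paper's. The paper's broadcast is a two-orbit pattern, $T=\{(0,(x,3x+6y)):x,y\in\Z\}\cup\{(2,(x+1,3x+6y+2)):x,y\in\Z\}$, mixing top and bottom vertices of $4$-cycles along diagonals with period~$6$; its verification proceeds through six vertex classes $A_1,\dots,A_6$ (reduced to three by symmetry), is largely figure-driven, and tracks reception amounts explicitly, with the density $\frac{1}{12}$ obtained by grouping six $4$-cycles containing $2$ broadcasters among $24$ vertices. Your single-orbit set $T=\{(0,(x,x+3y)):x,y\in\Z\}$ --- tops of all $4$-cycles on every third diagonal $Y\equiv X\pmod 3$ --- is simpler and more symmetric: I checked that the radius-$3$ ball of $(0,(X,Y))$ contains all of squares $(X,Y)$ and $(X,Y+1)$, vertices $\{0,2,3\}$ of $(X+1,Y)$, vertices $\{0,1,2\}$ of $(X-1,Y)$, vertex $0$ of $(X,Y-1)$, vertex $3$ of $(X+1,Y+1)$, and vertex $1$ of $(X-1,Y+1)$ (the $17$ vertices predicted by Lemma~\ref{lem:P(t)}), and with this your three residue cases do cover everything, including the one tight spot: the right vertex of a $Y-X\equiv 2$ square is reached from the broadcaster at $(x+1,y-1)$, whose residue $(y-1)-(x+1)\equiv 0\pmod 3$ is correct, via your valid length-$3$ path $(0,(x+1,y-1))\hbox{--}(2,(x+1,y))\hbox{--}(3,(x+1,y))\hbox{--}(1,(x,y))$. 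Your upfront reduction of the $(4,1)$ condition to a radius-$3$ covering problem (since $r=1$ needs no additive bookkeeping) is only implicit in the paper, and making it explicit buys you a shorter, fully checkable case analysis; what the paper's less symmetric pattern buys, by contrast, is nothing extra here --- both constructions achieve the same density $\frac{1}{3}\cdot\frac{1}{4}=\frac{1}{12}$, so your argument is a legitimate and arguably cleaner substitute for the published proof of Theorem~\ref{thm:density41}.
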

\begin{proof}
Let 
\[T=\{(0,(x,3x+6y)):x,y\in \Z\}
\cup\{(2,(x+1,3x+6y+2)):x,y\in \Z\}.
\]
\begin{figure}[h!]
    \centering
\includegraphics[width=4in,trim=1cm 3cm 1cm 1cm,clip]{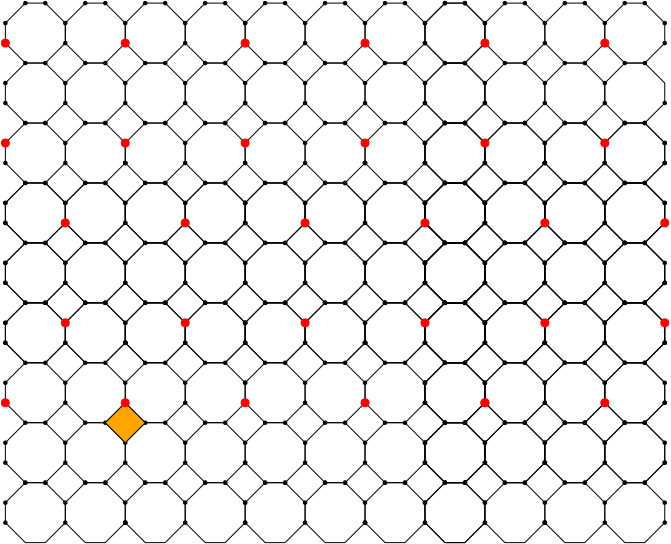}
    \caption{Illustration of the position of the broadcasting vertices in the $(4,1)$ broadcast of Theorem~\ref{thm:density41}, which has density $\frac{1}{12}$.}
    \label{fig:rowsfor41}
\end{figure}
We argue that all vertices that are not in $T$ receive enough reception from at most $2$ vertices in $T$. 

To begin we note that in the case that a $4$-cycle contains a broadcasting vertex $v=(a,(x,y))\in T$ (with $a\in\{0,2\}$), if $w$ is another vertex in that same $4$-cycle containing $v$, then the maximum distance between $v$ and $w$ is $2$. Thus, the broadcasting vertex $v$ would give at least reception $2$ to any other vertex in the same $4$-cycle. Thus, non-broadcasting vertices on a $4$-cycle containing a broadcasting vertex are all dominated. 
Thus it suffices to consider vertices in $\gH$ which are not in a $4$-cycle containing a vertex $v\in T$.
This set is given by
\[A_1\cup A_2\cup A_3\cup A_4\cup A_5\cup A_6,\]
where 
\begin{align*}
A_1&=\{(a,(x,y)):a\in\{0,1,2,3\}, x,y\in \Z, \mbox{ with } y\equiv 1\mod 6 \}\\
A_2&=\{(a,(x,y)):a\in\{0,1,2,3\}, x,y\in \Z, \mbox{ with }  y\equiv 4 \mod 6\}\\
A_3&=\{(a,(x,y)):a\in\{0,1,2,3\}, x,y\in\Z, \mbox{ with }x\equiv 1\mod 2\mbox{ and }y\equiv 0 \mod 6 \}
\\
A_4&=\{(a,(x,y)):a\in\{0,1,2,3\}, x,y\in\Z, \mbox{ with }x\equiv 0\mod 2\mbox{ and }y\equiv 3 \mod 6\}
\\
A_5&=\{(a,(x,y)):a\in\{0,1,2,3\}, x,y\in\Z,\mbox{with }x\equiv 0\mod  2\mbox{ and }y\equiv 2 \mod 6\}\mbox{, and}\\
A_6&=\{(a,(x,y)):a\in\{0,1,2,3\}, x,y\in\Z,\mbox{with }x\equiv 1\mod  2\mbox{ and }y\equiv 5 \mod 6\}.
\end{align*}

Given the symmetry of the graph, it suffices to show that the vertices in the sets $A_1$, $A_4$, and $A_{5}$ are dominated.
We work on a case-by-case analysis by
considering vertices in these sets.
\begin{enumerate}
\item Fix an arbitrary $4$-cycle of vertices in $A_1$ consisting of $(0,(x,y))$, $(1,(x,y))$, $(2,(x,y))$, and $(3,(x,y))$, where $x\in\Z$, and $y\in\Z$ satisfying $y\equiv 1\mod 6$.
In \Cref{fig:pic41} we label the $4$-cycles $\mathcal{C}_{-1}$, $\mathcal{C}_{0}$, and $\mathcal{C}_{+1}$, at positions $(x-1,y+1)$, $(x,y)$, and $(x+1,y+1)$, respectively, as well as the position of broadcasting vertices $v_1=(0,(x,y-1))$, $v_2=(2,(x-1,y+1))$, and $v_3=(2,(x+1,y+1))$.
The Figures~\ref{fig:y=1mod6}
 we illustrate the signal sent from each of the broadcasting vertices $v_1=(0,(x,y-1))$, $v_2=(2,(x-1,y+1))$, and $v_3=(2,(x+1,y+1))$. 
\begin{figure}[h!]
    \centering
\includegraphics[width=2in,trim=3cm 3cm 3cm 3cm,clip]{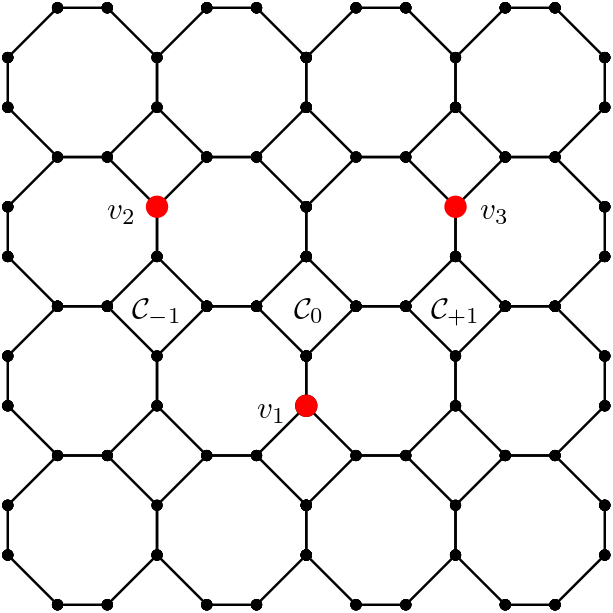}
    \caption{Positioning of three consecutive $4$-cycles and three broadcasting vertices.}
    \label{fig:pic41}
\end{figure}

As the reception is additive, all of the vertices $(a,(x-1,y))$, $(a,(x,y))$ and $(a,(x+1,y))$ with $a\in\{0,1,2,3\}$ receive reception at least $1$ from the broadcasting vertices $v_1,v_2,v_3$, and are hence dominated.
Thus, the vertices in the set $A_1$ are dominated.
\begin{figure}[h!]
\centering
\begin{subfigure}[t]{.3\textwidth}
\centering
\includegraphics[width=2in,trim=.5cm .5cm .5cm 4.5cm,clip]{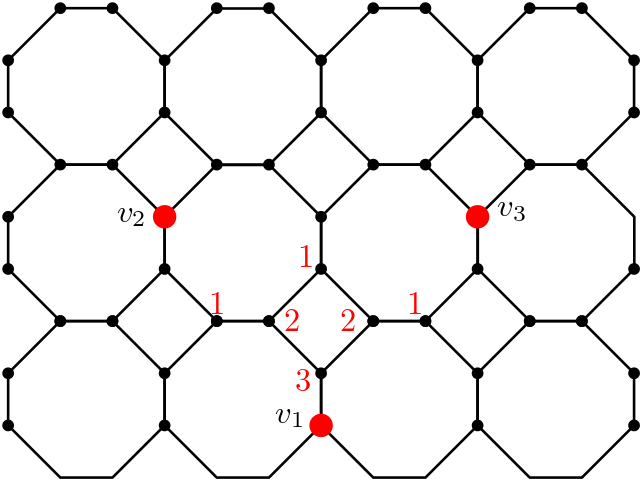}
\caption{Reception from $v_1$.}\label{fig:receptionfromv1}
\end{subfigure}
\quad
\begin{subfigure}[t]{.3\textwidth}
\centering
\includegraphics[width=2in,trim=.5cm .5cm .5cm 4.5cm,clip]{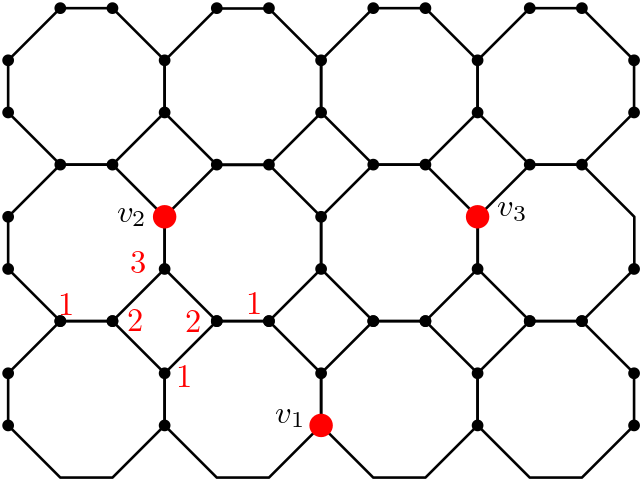}
\caption{Reception from $v_2$.}\label{fig:receptionfromv2}
\end{subfigure}
\quad
\begin{subfigure}[t]{.3\textwidth}
\centering
\includegraphics[width=2in,trim=.5cm .5cm .5cm 4.5cm,clip]{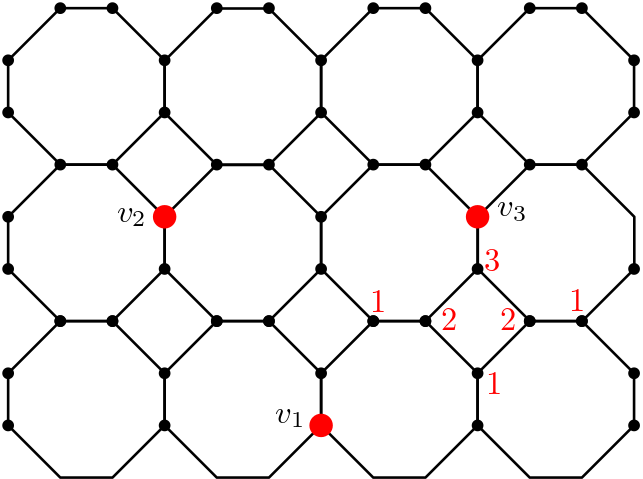}
\caption{Reception from $v_3$.}\label{fig:receptionfromv3}
\end{subfigure}
\caption{The reception at vertices $(a,(x,y))$ with $a\in\{0,1,2,3\}$, $x,y\in\Z$ with $y\equiv 1\mod6$, received from broadcasting vertices $v_1,v_2,v_3$, respectively.}\label{fig:y=1mod6}
\end{figure}

\item Fix an arbitrary $4$-cycle, denoted $\mathcal{C}_0$, in $A_4$ with vertices $(0,(x,y))$, $(1,(x,y))$, $(2,(x,y))$, and $(3,(x,y))$, with $x\in\Z$ with $x\equiv 0 \mod 2$, and $y\in\Z$ with $y\equiv 3\mod 6$.
In Figure~\ref{fig:22ax} we illustrate the $4$-cycle at position $\mathcal{C}_0=(x,y)$ and in \Cref{fig:22b} how the broadcasting vertices 
$v_1=(0,(x-1,y)),v_2=(0,(x+1,y))\in T$ send reception to the vertices in $\mathcal{C}_0$.
Note that the vertices considered are dominated as they have received at least $2$ in reception.

Thus every vertex in $A_4$ is dominated.
\begin{figure}[h!]
\centering
\begin{subfigure}[t]{.4\textwidth}
\centering
\includegraphics[width=2in,trim=.5cm 10cm .5cm 2.5cm,clip]{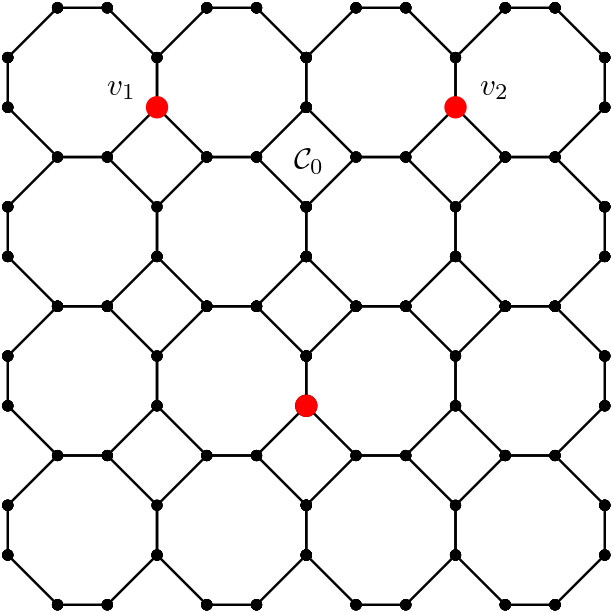}
\caption{Position of the $4$-cycle $\mathcal{C}_0$.}\label{fig:22ax}
\end{subfigure}
\quad\quad
\begin{subfigure}[t]{.4\textwidth}
\centering
\includegraphics[width=2in,trim=.5cm 4.55cm .5cm 2.5cm,clip]{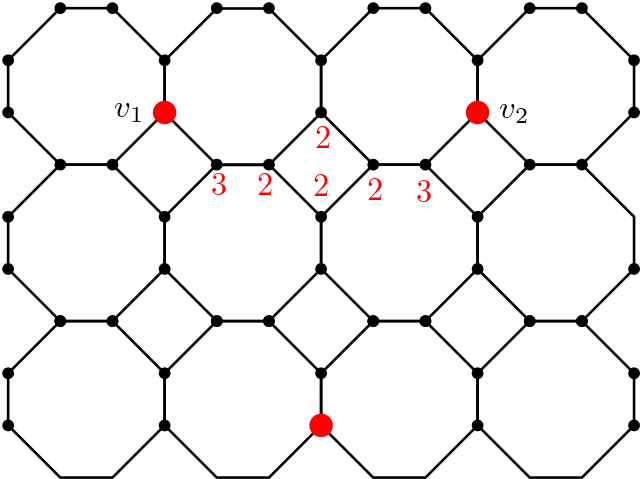}
\caption{Reception received from broadcasting vertices $v_1$ and $v_2$ combined.}\label{fig:22b}
\end{subfigure}
\caption{The position of a $4$-cycle in $A_4$ and nearby broadcasting vertices.}\label{fig:y=2mod4}
\end{figure}

\item Fix an arbitrary $4$-cycle, denoted $\mathcal{C}_0$, in $A_5$ with vertices $(0,(x,y))$, $(1,(x,y))$, $(2,(x,y))$, and $(3,(x,y))$, with $x\in\Z$ with $x\equiv 0 \mod 2$, and $y\in\Z$ with $y\equiv 2\mod 6$.
In Figure~\ref{fig:23a} we illustrate the $4$-cycle at position $\mathcal{C}_0=(x,y)$ and in \Cref{fig:23b} how the broadcasting vertices
$v_1=(2,(x-1,y)),v_2=(2,(x+1,y))\in T$ send reception to the vertices in $\mathcal{C}_0$.
Note that the vertices considered are dominated as they have received at least $2$ in reception.

Thus every vertex in $A_4$ is dominated.
\begin{figure}[h!]
\centering
\begin{subfigure}[t]{.4\textwidth}
\centering
\includegraphics[width=2in,trim=.5cm 10cm .5cm 2.5cm,clip]{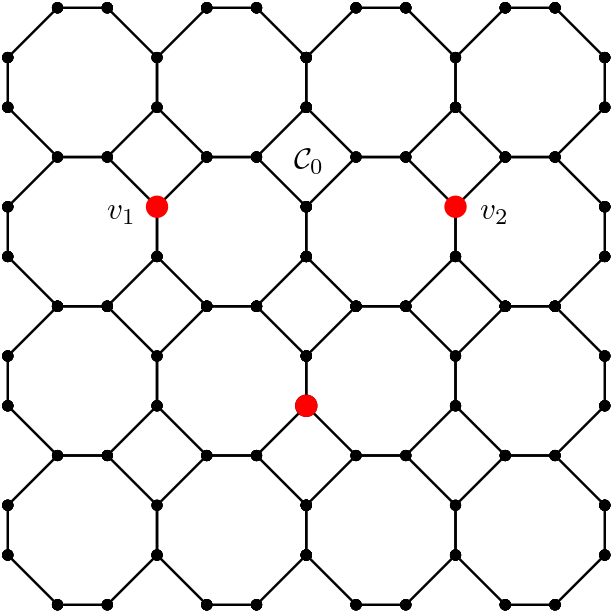}
\caption{Position of the $4$-cycle $\mathcal{C}_0$.}\label{fig:23a}
\end{subfigure}
\quad\quad
\begin{subfigure}[t]{.4\textwidth}
\centering
\includegraphics[width=2in,trim=.5cm 4.55cm .5cm 2.5cm,clip]{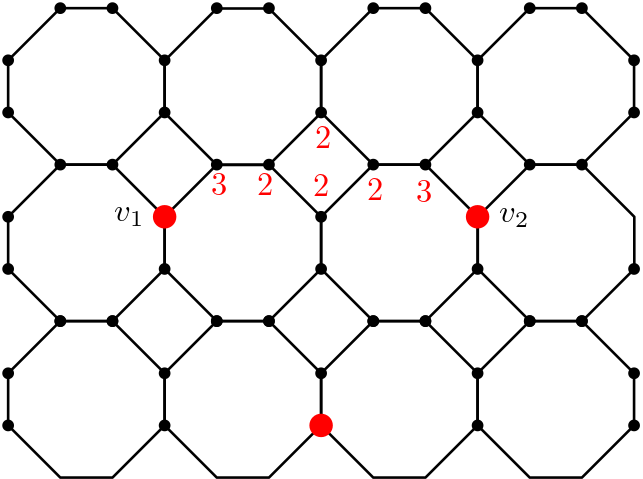}
\caption{Reception received from broadcasting vertices $v_1$ and $v_2$ combined.}\label{fig:23b}
\end{subfigure}
\caption{The position of a $4$-cycle in $A_5$ and nearby broadcasting vertices.}\label{fig:y=2mod4}
\end{figure}

\end{enumerate}

\begin{figure}
\centering
\includegraphics[width=0.5\linewidth,trim=2in 2in 3.2in 2.5in,clip]{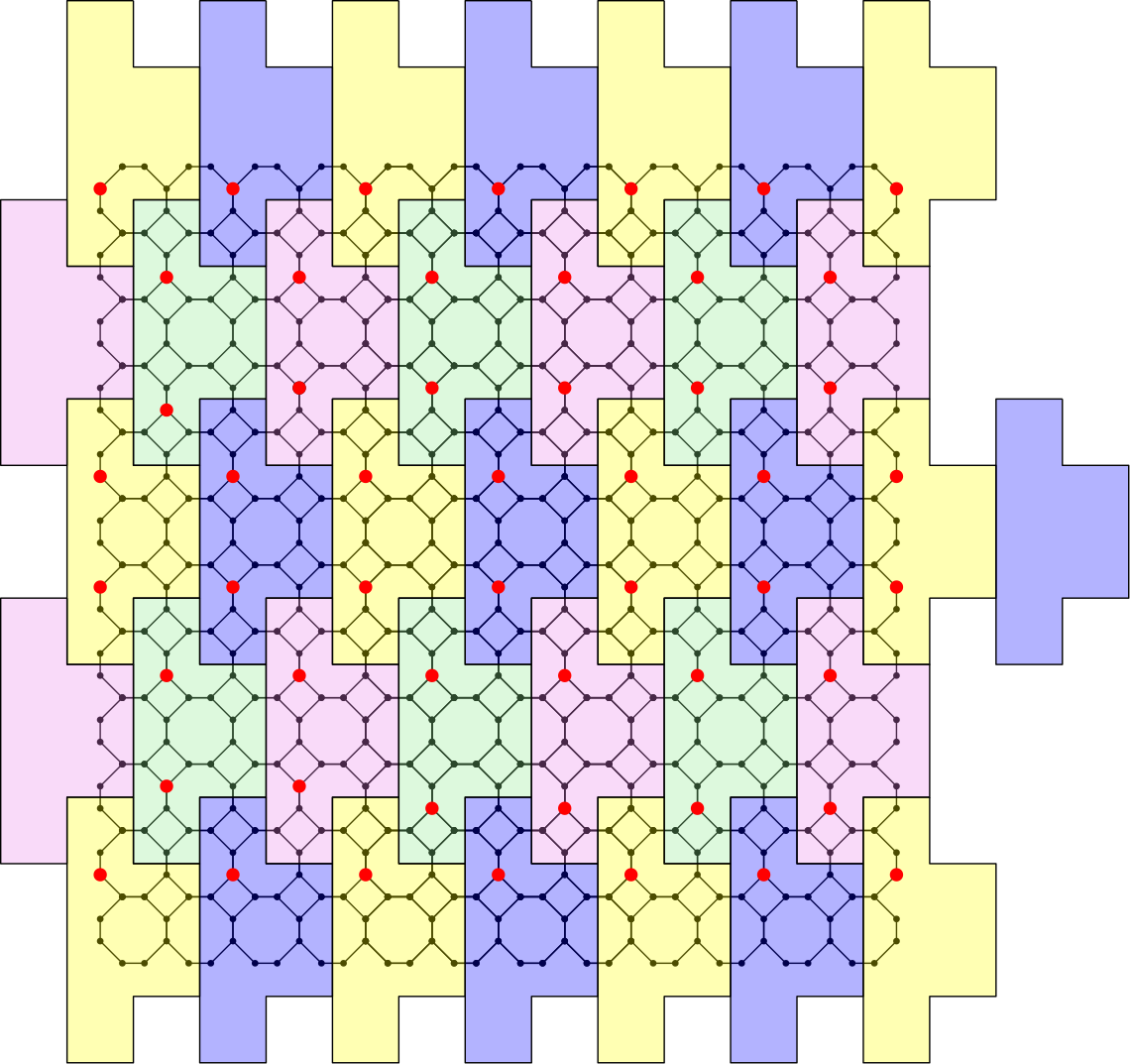}
    \caption{Grouping of six $4$-cycles used to construct the upper bound for the density in the proof of Theorem~\ref{thm:density41}.}
    \label{fig:six4cycles}
\end{figure}
This establishes that $T$ is a broadcast for $\gH$.
Next we determine the proportion of vertices in $\gH$ that are in $T$.
To compute this proportion, we note that we can group six $4$-cycles, as illustrated in Figure~\ref{fig:six4cycles}, in which there are $24$ vertices and only $2$ are dominating vertices. Thus, 
$\delta_{4,1}(\gH)\leq \frac{1}{12}$, as claimed.
\end{proof}

We conclude this section by presenting density conjectures for $(t,r)\in\{(4,2),(4,3),(4,4)\}.$
\begin{conjecture}\label{conj:densities}
The optimal densities of $H_{\infty,\infty}$ satisfy
    \[
        \delta_{4,2}(H_{\infty,\infty})\leq \frac{9}{80},\quad
        \delta_{4,3}(H_{\infty,\infty})\leq \frac{1}{7},\quad\mbox{ and }\quad
        \delta_{4,4}(H_{\infty,\infty})\leq \frac{1}{6}.\]
\end{conjecture}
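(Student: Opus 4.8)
The plan is to prove each of the three bounds by the method used in Theorems~\ref{thm:density21}--\ref{thm:density41}: for each pair $(t,r)\in\{(4,2),(4,3),(4,4)\}$ I would exhibit an explicit doubly periodic broadcasting set $T\subseteq V(\gH)$, written in the coordinate system $(a,(x,y))$ with $a\in\Z_4$ and $(x,y)\in\Z^2$ as a finite union of cosets of a finite-index sublattice $L\leq\Z^2$ with prescribed clock-labels $a$. The lattice $L$ and the labels must be engineered so that a fundamental domain consists of $k$ four-cycles containing exactly $j$ broadcasting vertices with $j/(4k)$ equal to $\tfrac{9}{80}$, $\tfrac17$, and $\tfrac16$, respectively; concretely one searches for fundamental domains of $20$, $7$, and $6$ four-cycles carrying $9$, $4$, and $4$ broadcasting vertices.

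The verification of the broadcast property then follows the template of the $(3,1)$ and $(3,2)$ proofs. First I would record the reception profile of a strength-$4$ vertex: it sends $4,3,2,1$ to vertices at distance $0,1,2,3$, and by Lemma~\ref{lem:P(t)} its reach $P_v(4)$ contains $1+3+5+8=17$ vertices. Because each broadcasting vertex already supplies reception $4$ to itself, it suffices to treat $V(\gH)\setminus T$; I would partition this set into finitely many classes $A_1,\dots,A_p$ indexed by the residue of $(a,(x,y))$ modulo $L$, choose one representative per class, locate every broadcasting vertex within distance $3$, and sum the corresponding contributions from the profile to confirm the total is at least $r$. As in the earlier proofs, distances would be read off directly from the positions of the relevant $4$-cycles. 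Finally, the density equals $j/(4k)$ by counting broadcasting vertices in a fundamental domain, exactly as in the proof of Theorem~\ref{thm:density41}.

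The hard part is twofold. First, \emph{finding} the constructions: the obvious idea of superposing cheaper broadcasts fails to reach these densities, since by additivity of reception a disjoint union of a $(4,1)$ broadcast (density $\tfrac1{12}$) with a $(4,3)$ broadcast would yield a $(4,4)$ broadcast of density $\tfrac1{12}+\tfrac17=\tfrac{19}{84}>\tfrac16$, and two stacked $(4,1)$ broadcasts give a $(4,2)$ broadcast of density $\tfrac16>\tfrac{9}{80}$. Thus genuinely efficient and non-obvious periodic patterns are required, and these are most plausibly discovered by a computer search over small-period configurations using the testing code of \cite{cervantes2024github}. Second, the \emph{verification} at reach $3$ is delicate: the region $P_v(4)$ is non-convex (cf.\ Figure~\ref{fig:P(5)}), distances are harder to certify than in the $t\le 3$ cases, and a single non-broadcasting vertex may draw signal from several broadcasting vertices, so the case analysis will be substantially longer than the six-case analysis in Theorem~\ref{thm:density32}. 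To keep it manageable I would exploit the symmetries of the truncated square tiling to collapse equivalent residue classes into a few representatives, mirroring the reduction to the sets $A_1,A_4,A_5$ in the proof of Theorem~\ref{thm:density41}.
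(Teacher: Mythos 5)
You should first note a mismatch of genre: the statement you are proving is stated in the paper as a \emph{conjecture}, not a theorem. The paper supplies only candidate broadcasts in pictures (Figures~\ref{fig:Delta42}, \ref{fig:Delta43}, and~\ref{fig:Delta44}) and explicitly declines to verify them, deferring the proof to Problem~\ref{problem:prove conjectures} ``given the many cases needed.'' So there is no proof in the paper to compare against, and your proposal correctly reconstructs the template the authors would use: a doubly periodic set $T$ in the $(a,(x,y))$ coordinates, a residue-class partition of $V(\gH)\setminus T$, a distance check against the reception profile $4,3,2,1$ at distances $0,1,2,3$, and a fundamental-domain count for the density. Your bookkeeping is also sound: fundamental domains of $20$, $7$, and $6$ four-cycles carrying $9$, $4$, and $4$ broadcasting vertices do give $\tfrac{9}{80}$, $\tfrac{1}{7}$, and $\tfrac{1}{6}$; $|V(P_v(4))|=1+3+5+8=17$ agrees with Lemma~\ref{lem:P(t)}; and your superposition estimates ($\tfrac{1}{12}+\tfrac{1}{7}=\tfrac{19}{84}>\tfrac16$ and $2\cdot\tfrac{1}{12}=\tfrac16>\tfrac{9}{80}$) correctly show that naive unions of cheaper broadcasts cannot reach the conjectured densities.

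The genuine gap is that the proposal never produces the objects it needs. You describe how one \emph{would} write down $T$ as a union of cosets of a finite-index sublattice, but no explicit sublattice, labels, or coset representatives are given for any of the three pairs $(4,2)$, $(4,3)$, $(4,4)$; you yourself defer the discovery to a computer search. Consequently the verification step --- the only mathematically substantive part, and the part the authors flag as requiring ``many cases'' --- is carried out for zero cases. As it stands, the proposal establishes none of the three inequalities; it is a faithful plan of attack (essentially a restatement of Problem~\ref{problem:prove conjectures} together with the known proof template of Theorems~\ref{thm:density31}, \ref{thm:density32}, and~\ref{thm:density41}), not a proof. To close the gap you would need, at minimum, to transcribe the broadcasting sets visible in Figures~\ref{fig:Delta42}--\ref{fig:Delta44} into the coset form, confirm periodicity and the claimed counts per fundamental domain, and then run the full residue-class case analysis at reach $3$, where the non-convexity of $P_v(4)$ and multi-source reception sums make the distance certifications genuinely more delicate than in the $t\le 3$ proofs.
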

In Figures~\ref{fig:Delta42},~\ref{fig:Delta43}, and~\ref{fig:Delta44} we provide broadcasts that achieve the bounds provided in Conjecture~\ref{conj:densities}. However, given the many cases needed, and short of a detailed proof, we leave the statements as conjectures and state their proof as Problem \ref{problem:prove conjectures}.

\begin{figure}[htp]
    \centering
    \begin{subfigure}{.45\textwidth}
\centering    \includegraphics[width=3in]{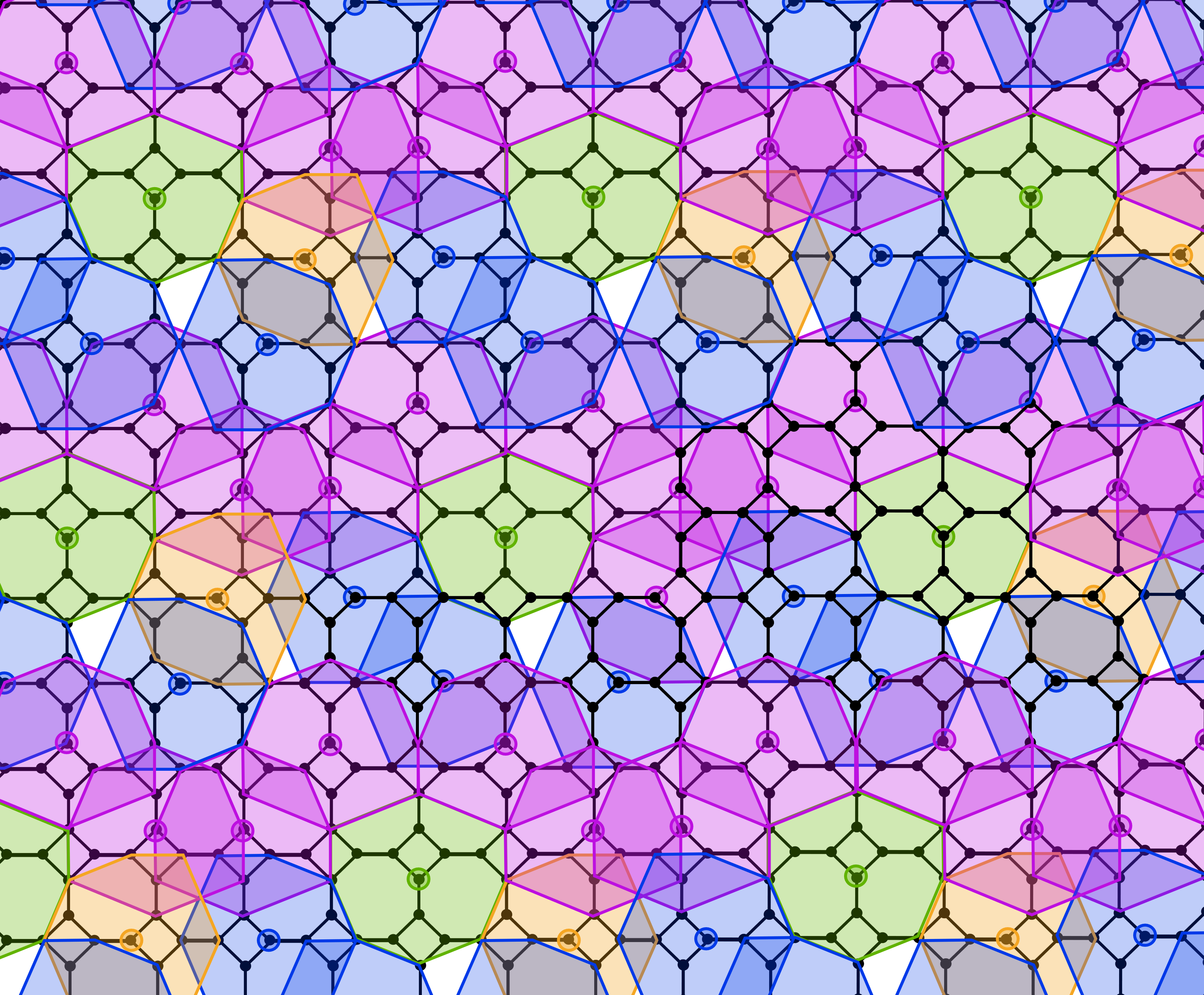}
\caption{A $(4,2)$ broadcast on $H_{\infty,\infty}$.}\label{fig:Delta42}
    \end{subfigure}
    \hfill
\begin{subfigure}{.45\textwidth}
    \centering
    \includegraphics[width=3in]{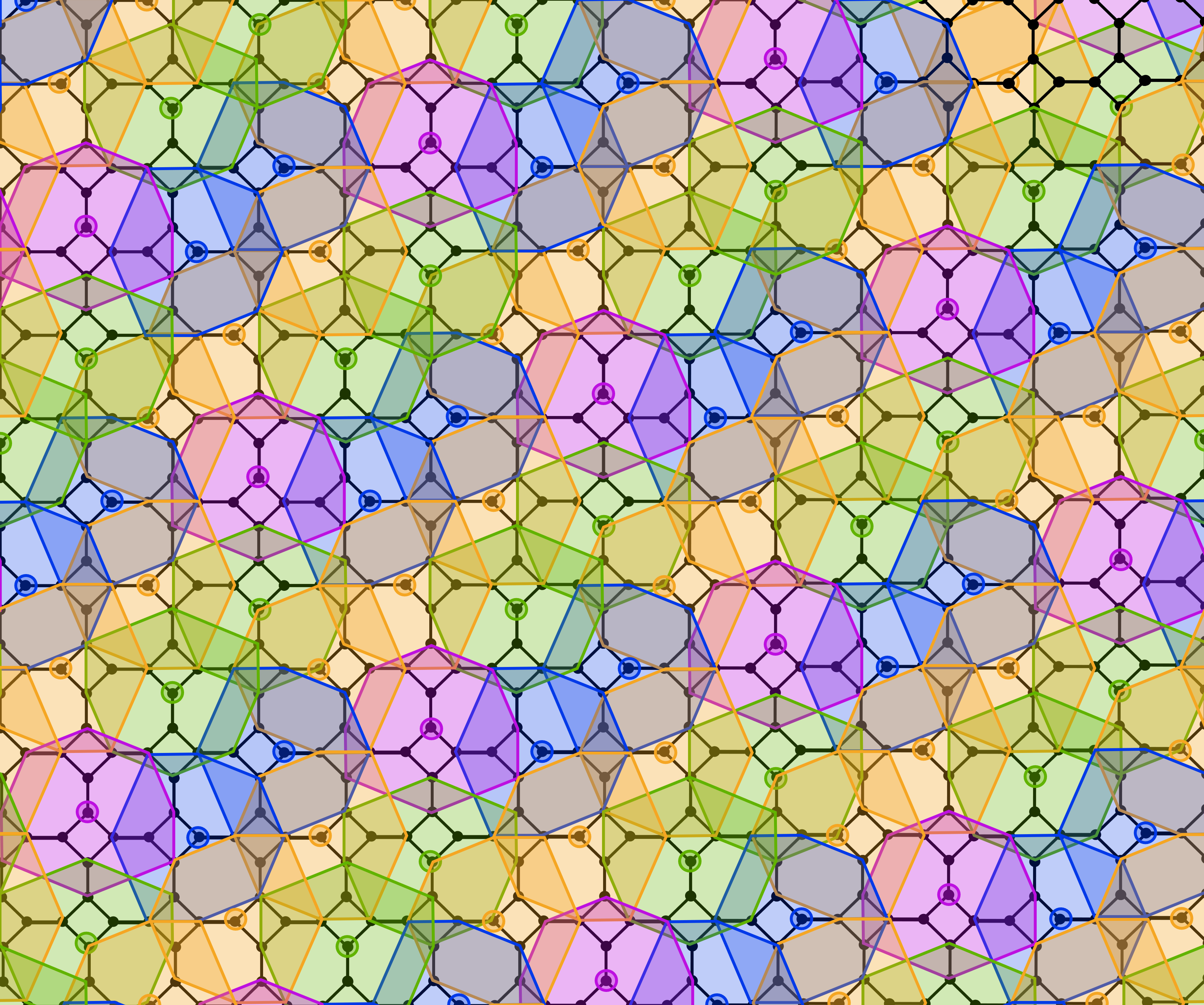}
    \caption{A $(4,3)$ broadcast on $H_{\infty,\infty}$.}
    \label{fig:Delta43}
\end{subfigure}\\

\begin{subfigure}{.35\textwidth}
    \centering
    \includegraphics[width=3in]{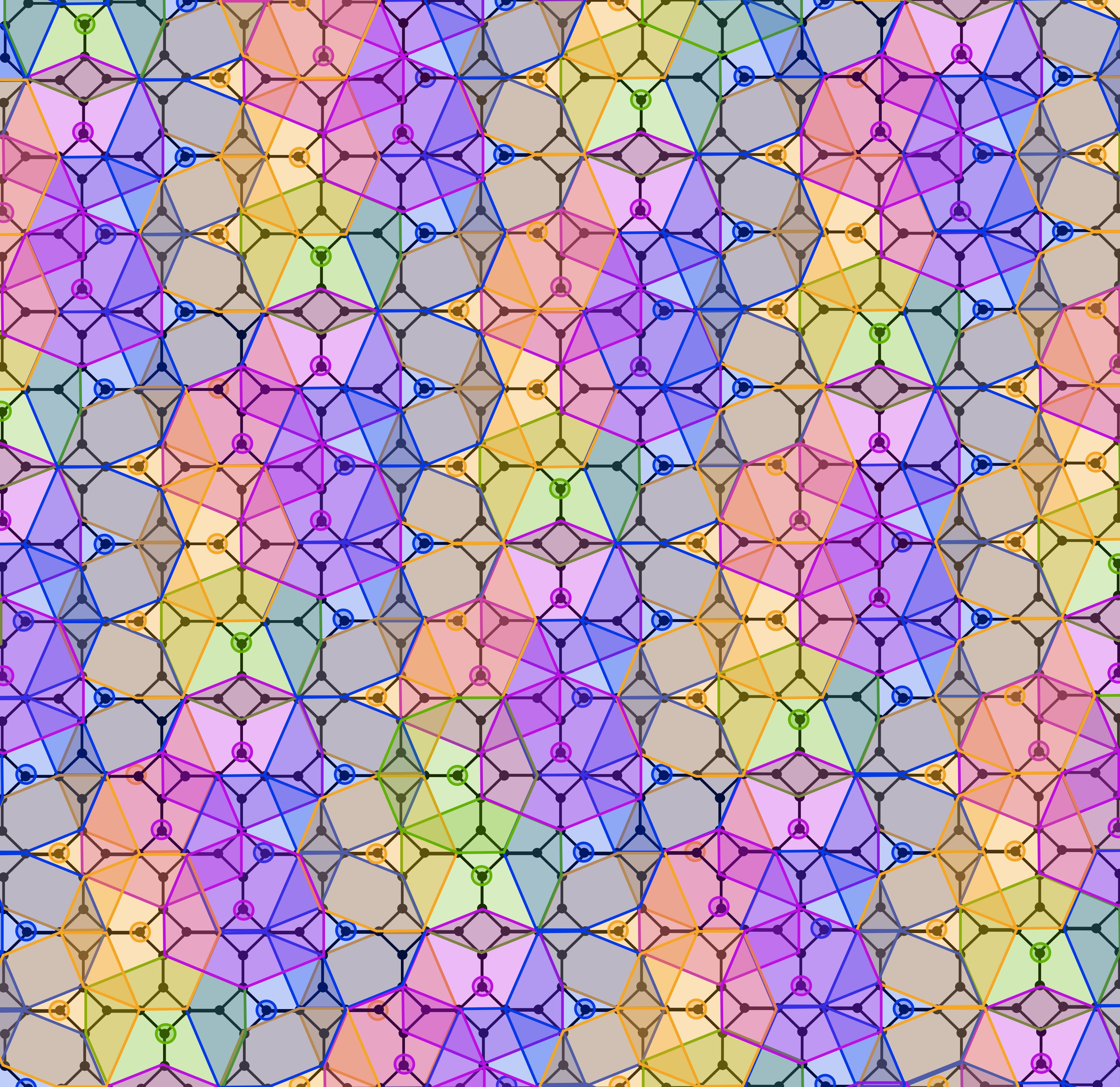}
    \caption{A $(4,4)$ broadcast on $H_{\infty,\infty}$.}
    \label{fig:Delta44}
    \end{subfigure}
    \caption{$(4,2)$, $(4,3)$, and $(4,4)$ broadcasts on $H_{\infty,\infty}$.}
\end{figure}

\section{Some bounds for the \texorpdfstring{$(t,r)$}{(t,r)} domination number of \texorpdfstring{$H_{m,n}$}{Hmn}}\label{sec:tr bounds}

Using the density results from Section~\ref{sec:densities}, we give upper bounds for the $(2,2)$ and $(3,3)$ broadcast domination number of $H_{m,n}$ for $m,n\geq 1$.

\begin{figure}[H]
    \centering
    
    \begin{subfigure}{.45\textwidth}
    \centering
\includegraphics[width=2.75in]{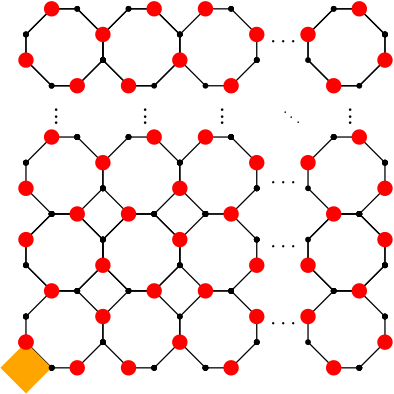}
    \caption{$(2,2)$}
    \label{fig:dominate22Hmn}
\end{subfigure}
\qquad
    \begin{subfigure}{.45\textwidth}
    \centering
\includegraphics[width=2.75in]{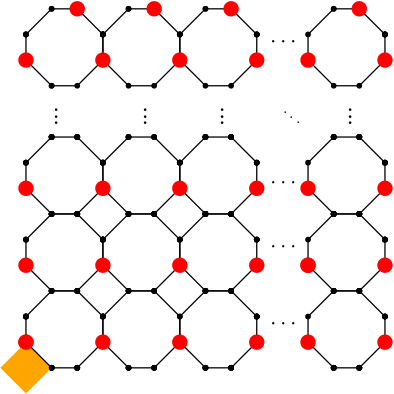}
    \caption{$(3,3)$}\label{fig:dominate33Hmn}
    \end{subfigure}
    \caption{A $(2,2)$ and a $(3,3)$ broadcast dominating set of $H_{m,n}$.}\label{fig:22 and 33}
\end{figure}

\begin{corollary}\label{cor:22bound}
    If $m,n\geq 1$, then $\gamma_{2,2}(H_{m,n})\leq 2mn+m+n$.
\end{corollary}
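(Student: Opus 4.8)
The plan is to exhibit an explicit $(2,2)$ broadcast of $H_{m,n}$ of the required size by adapting the periodic pattern of Theorem~\ref{thm:density22} to the finite graph, as depicted in Figure~\ref{fig:dominate22Hmn}. The starting observation is numerical: by Theorem~\ref{thm:enum_vertices} we have $|V(H_{m,n})| = 2m+n(4m+2) = 4mn+2m+2n$, which is even, and exactly half of this count equals the target bound $2mn+m+n$. Thus it suffices to produce a valid $(2,2)$ broadcast using at most half of the vertices of $H_{m,n}$.

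The key structural fact I would use is that $H_{m,n}$ is bipartite. Indeed, $H_{m,n}$ is an induced subgraph of the planar graph $\gH$, all of whose bounded faces are $4$-cycles (squares) or $8$-cycles (octagons) and hence of even length; a planar graph with only even faces is bipartite, and any subgraph of a bipartite graph is bipartite. Let $V(H_{m,n}) = S \sqcup S'$ be the resulting $2$-coloring. Unwinding the description of the set $T = T_1 \cup T_2 \cup T_3 \cup T_4$ from Theorem~\ref{thm:density22}, one checks that on each $4$-cycle the two selected vertices have one color and the two unselected vertices the other, so $T$ is independent, $V(\gH)\setminus T$ is independent, and $T$ is precisely one color class of $\gH$; restricting to $H_{m,n}$ reproduces one of the classes $S$, $S'$.

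I would then verify that each color class is itself a valid $(2,2)$ broadcast. Taking $S$ as the broadcasting set: every vertex of $S$ receives reception $t = 2$ from itself, so it is dominated. Every vertex $u \in S'$ has all of its neighbors in $S$ by bipartiteness, and each such neighbor lies at distance $1$, contributing $t - d = 2-1 = 1$ to $f(u)$. Hence $u$ receives reception equal to its degree, and as soon as $\deg(u) \geq 2$ we get $f(u) \geq 2 = r$. The same argument applies with the roles of $S$ and $S'$ swapped. Finally, choosing whichever color class is smaller gives a $(2,2)$ broadcast of size $\min(|S|,|S'|) \leq \tfrac12|V(H_{m,n})| = 2mn+m+n$, which is the claimed bound.

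The one point requiring care, and the main obstacle, is the boundary behavior encapsulated in the hypothesis $\deg(u) \geq 2$. I would confirm that $H_{m,n}$ has minimum degree at least $2$ (equivalently, no pendant vertices), which is visible from the construction in Figure~\ref{fig:hmn+1}: every vertex lies on at least one octagonal face, and interior vertices additionally lie on a square, giving degrees $2$ or $3$ throughout. With minimum degree $2$ confirmed, the reception bound $f(u)\geq 2$ holds for all non-broadcasting vertices, including those on the boundary, where an interior vertex would instead have received the extra third unit of signal noted in the remark following Theorem~\ref{thm:density33}.
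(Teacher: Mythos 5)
Your proof is correct, and it takes a genuinely different route from the paper's. The paper's proof of Corollary~\ref{cor:22bound} overlays the explicit periodic pattern of Theorem~\ref{thm:density22} onto the finite graph and counts broadcasting vertices column by column: $4+3(m-1)=3m+1$ for the leftmost column of octagons and $3+2(m-1)=2m+1$ for each of the remaining $n-1$ columns, summing to $2mn+m+n$; no bipartiteness, degree condition, or appeal to the vertex count enters. You instead abstract the pattern to the observation that it is one side of the bipartition of $\gH$ (all bounded faces of the truncated square tiling are $4$- or $8$-cycles, so every cycle is even and the graph, hence every subgraph, is bipartite --- one can check directly that the class containing $T$ consists of the top and bottom vertices of the $4$-cycles at positions $(x,y)$ with $x+y$ even together with the left and right vertices at positions with $x+y$ odd, matching $T_1\cup T_2\cup T_3\cup T_4$), and you isolate the general fact that in any bipartite graph of minimum degree at least $2$, either color class is a $(2,2)$ broadcast: a class member receives reception $2$ from itself, while any other vertex $u$ has all its neighbors in the class, so $f(u)=\deg(u)\geq 2$. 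Your boundary check is the right one and it holds: every vertex of $H_{m,n}$ lies on an octagonal cycle, so the minimum degree is $2$, including at the boundary where the extra signal available in $\gH$ is lost. Combining with $|V(H_{m,n})|=4mn+2m+2n$ from Theorem~\ref{thm:enum_vertices} and taking the smaller class gives $\min(|S|,|S'|)\leq \tfrac12|V(H_{m,n})|=2mn+m+n$. What each approach buys: yours is shorter, avoids all positioning and tiling bookkeeping relative to the fixed origin, and yields the reusable statement that $\gamma_{2,2}(G)\leq |V(G)|/2$ for any bipartite $G$ with minimum degree at least $2$; the paper's argument is fully constructive about where the broadcasting vertices sit, which matches its figure-based presentation and extends more naturally to the other $(t,r)$ pairs, where no such clean structural characterization of the broadcast is available.
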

\begin{proof}Using the broadcast presented in the proof of
    Theorem~\ref{thm:density22}, we position the graph $H_{m,n}$ in Figure~\ref{fig:rowsfor22} so that the bottom left octagon shares its southwest most edge and vertices with the origin. The bottom left-most octagon needs $4$ vertices to be $(2,2)$ broadcast dominated, and the $m-1$ octagons above it each need $3$ additional vertices, see Figure~\ref{fig:dominate22Hmn}.  This contributes $4+3(m-1)=3m+1$ broadcasting vertices to the $(2,2)$ dominating set, see Figure~\ref{fig:dominate22Hmn}.
    Each of the $n-1$ subsequent columns need $3$ broadcasting vertices to dominate the bottom most octagon and $2$ for each of the $m-1$ octagons above it. This contributes $(n-1)(3+2(m-1))$ broadcasting vertices to the count. Thus
    \[\gamma_{2,2}(H_{m,n})\leq 3m+1+(n-1)(3+2(m-1))=2mn+m+n.\qedhere\]
\end{proof}

\begin{corollary}\label{cor:33bound}
    If $m,n\geq 1$, then $\gamma_{3,3}(H_{m,n})\leq mn+m+n$.
\end{corollary}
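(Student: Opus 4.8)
The plan is to follow the proof of Corollary~\ref{cor:22bound} closely, using the $(3,3)$ broadcast of Theorem~\ref{thm:density33}---one broadcasting vertex per $4$-cycle, placed at the top vertex labeled~$0$---as the interior template, and then counting broadcasting vertices octagon-by-octagon and column-by-column while compensating for the boundary. First I would embed $H_{m,n}$ in the lattice of Figure~\ref{fig:rowsfor33} so that its southwest corner octagon is anchored at the origin, matching the placement drawn in Figure~\ref{fig:dominate33Hmn}.

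In the interior pattern each non-broadcasting vertex reaches reception~$3$ by combining the reception from the broadcasting vertex of its own $4$-cycle with the reception from a broadcasting vertex in an adjacent $4$-cycle (the left, right, or below neighbor, which serves the left, right, and bottom vertices of the $4$-cycle respectively, exactly as in cases (1)--(3) of Theorem~\ref{thm:density33}). Consequently an octagon lying in the interior contributes only one new broadcasting vertex, an octagon touching exactly one of the left or bottom boundaries contributes two (one extra to replace the missing neighbor source), and the southwest corner octagon contributes three. I would then organize the count by columns: the leftmost column consists of the corner octagon together with $m-1$ left-boundary octagons, contributing $3 + 2(m-1) = 2m+1$, while each of the remaining $n-1$ columns consists of one bottom-boundary octagon together with $m-1$ interior octagons, contributing $2 + (m-1) = m+1$.

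Summing over all columns yields
\[
(2m+1) + (n-1)(m+1) = mn + m + n,
\]
which is the claimed bound. I would direct the reader to Figure~\ref{fig:dominate33Hmn} for the explicit broadcasting set, and note as a sanity check that the recipe reproduces the expected values on the smallest graphs, namely $3,5,5,8$ for $H_{1,1},H_{1,2},H_{2,1},H_{2,2}$.

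The main obstacle is the boundary verification. The clean interior argument of Theorem~\ref{thm:density33} breaks down wherever a $4$-cycle is missing one of its left, right, or below neighbors, and I must confirm that the broadcasting vertices budgeted by the column sweep genuinely deliver reception $\geq 3$ to \emph{every} vertex of $H_{m,n}$, including those along the right and top edges that the sweep does not explicitly reinforce. The delicate points are the corner octagon, where two neighbor sources are absent simultaneously, and the seams between consecutive octagons, where a single added broadcasting vertex must repair several deficient vertices at once. Each of these reduces to a local check of the same distance-$1$ and distance-$2$ relations already tabulated in the proof of Theorem~\ref{thm:density33}, so the verification, though lengthy, is routine; its real content is that the sharing across each seam removes exactly one source per octagon, which is precisely what forces the increments to be $1$, $2$, and $3$ rather than anything larger.
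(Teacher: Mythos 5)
Your strategy coincides with the paper's own proof: anchor the $(3,3)$ pattern of Theorem~\ref{thm:density33} as in Figure~\ref{fig:dominate33Hmn}, sweep by columns, and your column totals $2m+1$ and $m+1$ and the final arithmetic $(2m+1)+(n-1)(m+1)=mn+m+n$ are exactly the paper's. The gap is in your boundary analysis: you assign the surplus broadcasting vertices to the wrong side of the graph, and as budgeted your set fails to dominate. In the pattern of Theorem~\ref{thm:density33} every broadcaster is the top vertex $(0,(x,y))$ of a $4$-cycle, and each remaining vertex of a $4$-cycle is served by the top of its own $4$-cycle together with the top of the neighbor to its right, below it, or to its left (cases (1)--(3) of that proof). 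Along the left, right, and bottom edges of $H_{m,n}$ this produces no deficiency at all: the partial $4$-cycles there are missing precisely the vertex (left, right, or bottom, respectively) that would have needed the absent neighbor, while their top vertices --- the pattern's broadcasters --- all exist in $H_{m,n}$. The genuine failure is along the \emph{top} edge, where the partial $4$-cycles are missing their top vertices, i.e., the broadcasters themselves; their surviving right, bottom, and left vertices then receive reception only $1$, $2$, and $1$, respectively. Hence the $n$ vertices you have beyond the $m(n+1)$ restricted-pattern vertices must be spent one per column on the \emph{top} octagon --- which is exactly the paper's distribution ($3$ at the top octagon of the first column, $2$ at the top octagon of each subsequent column) --- whereas you award them to the southwest corner and the bottom-boundary octagons ``to replace the missing neighbor source.'' For $m\geq 2$ a vertex on the bottom octagon of a column lies at distance greater than $2$ from the top edge, so under your budget those top-edge vertices retain reception at most $2$ and are not $(3,3)$ dominated; the top-edge verification you defer as ``routine'' is in fact the substance of the proof, and it is not bottom--top symmetric in the way you assume.

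The repair is immediate and leaves your count intact: move the $n$ extras to the top octagons, placing the extra for column $i$ at the right vertex of the top-boundary $4$-cycle of that column. That single vertex restores reception at least $3$ to all four deficient vertices of that column's top octagon (it covers itself, its bottom neighbor at distance $1$, and the left and bottom vertices of the next top-boundary $4$-cycle at distances $1$ and $2$). Alternatively, reflect the pattern vertically (broadcast from the bottom vertex of every $4$-cycle), under which your corner/bottom-heavy budget becomes the correct one. Either way you recover the paper's proof and the bound $mn+m+n$.
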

\begin{proof}    Using the broadcast presented in the proof Theorem~\ref{thm:density33}, we can position the graph $H_{m,n}$ in Figure~\ref{fig:rowsfor33}, so that the bottom left octagon shares its southwest most edge and vertices with the origin. 
Focusing only on the first column, note that of the $m$ octagons the bottom $m-1$ octagons need $2$ vertices to be $(3,3)$ broadcast dominated, and the top octagon needs $3$ vertices, see Figure~\ref{fig:dominate33Hmn}.  
This contributes $3+2(m-1)=2m+1$ broadcasting vertices to the $(3,3)$ dominating set. 
    Each of the $n-1$ subsequent columns need $1$ broadcasting vertex to dominate the bottom $m-1$ octagons and $2$ for the top octagon. This contributes $(n-1)(2+1(m-1))$ broadcasting vertices to the count. Thus
    \[\gamma_{3,3}(H_{m,n})\leq 2m+1+(n-1)(2+1(m-1))=mn+m+n.\qedhere\]
\end{proof}

Given the irregularity of the $(t,r) $ broadcasts presented in the proofs for the bounds on the density of the $(t,r)\in\{(3,1),(3,2),(4,1)\}$ broadcasts, 
we leave it as Problem \ref{open:irregularbroadcasts} to use those results to give similar upper bounds on the $(t,r)$ broadcast domination number of the finite graph $H_{m,n}$ with $m,n\geq 1$.

Moreover, as the reception is additive, having larger $r$ parameter requires one to determine the best arrangement of the broadcasting vertices so that the sum of the reception adds to $r$. These are integer partition problems, which are notoriously difficult to solve and even more challenging  given the structure of our graphs $H_m,n$. Thus, we pose Problem \ref{open:bounds in general} to give bounds for $\gamma_{t,r}(H_{m,n})$ in general.

\section{Future work}
One question of interest is to determine the largest graph $H_{m,m}$ for which $\gamma_{t,r}(H_{m,m})=1$. Based on our experiments the following conjecture appears to hold. 
\begin{conjecture}
    \label{thm:one_tower_h_mm}
    If $m \geq 1$ and $r \geq 1$,\, then $\gamma_{t,r}(H_{m,m})=1$ provided 
    \[ t =\begin{cases} 
       r + 2m + 1& \text{if $m$ even} \\
       r + 2m + 2& \text{if $m$ odd}. \\
   \end{cases}
\]
\end{conjecture}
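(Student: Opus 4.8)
The plan is to recast the statement as a question about the \emph{radius} of $H_{m,m}$. Since a single-vertex set $T=\{v\}$ produces reception $f(u)=\max\{t-d(u,v),0\}$ at each vertex $u$, and since $r\geq 1$, the requirement $f(u)\geq r$ for all $u$ is equivalent to $d(u,v)\leq t-r$ for all $u$; that is, $\gamma_{t,r}(H_{m,m})=1$ if and only if some vertex $v$ has eccentricity at most $t-r$, i.e. $\mathrm{rad}(H_{m,m})\leq t-r$. For the prescribed $t$ one has $t-r=2m+1$ when $m$ is even and $t-r=2m+2$ when $m$ is odd, so it suffices to exhibit a single vertex $v^{\star}$ whose eccentricity is at most this number. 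Proving the matching lower bound $\mathrm{rad}(H_{m,m})\geq t-r$ then shows that the given $t$ is the exact threshold, which is the content suggested by the surrounding discussion, but is not strictly needed for the literal statement.

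First I would choose $v^{\star}$ according to the parity of $m$, exploiting the $D_4$ symmetry of $H_{m,m}$. When $m$ is odd the grid has a central octagon $O_{(m+1)/2,(m+1)/2}$, and I take $v^{\star}$ to be one of its vertices; when $m$ is even the four central octagons meet along a central $4$-cycle, and I take $v^{\star}$ to be a vertex of that $4$-cycle (in the coordinate system $(a,(x,y))$ of Section~\ref{sec:densities} this is one of the labeled vertices of the central $4$-cycle). The parity shift of $1$ in the formula is precisely the difference between these two choices of center. To obtain the upper bound I would construct explicit paths from $v^{\star}$ to every vertex that stay inside $H_{m,m}$, using the two local moves available in the tiling: crossing a $4$-cycle between opposite vertices costs $2$, while crossing an octagon between opposite vertices costs $4$. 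The efficient route toward a far vertex is a ``diagonal staircase'' that hops from an inward corner of one octagon, through the intervening square (cost $2$), to the inward corner of the diagonally adjacent octagon, advancing one octagon in each grid direction per step, and then traverses the final corner octagon to its far side. Carrying this out shows the eccentricity of $v^{\star}$ is attained on the far vertices of the four corner octagons and equals $2m+1$ (for $m$ even) or $2m+2$ (for $m$ odd); the small cases $m=1$ (the octagon $C_8$, radius $4$) and $m=2$ (radius $5$) serve as checks.

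For the lower bound I would use that $H_{m,m}$ is a subgraph of $\gH$, so $d_{H_{m,m}}(u,w)\geq d_{\gH}(u,w)$ for all vertices $u,w$; hence it suffices to bound from below the distance in the infinite tiling from $v^{\star}$ to a corner. I expect this to follow either from a monotone potential argument---assigning to each vertex two skew linear functionals aligned with the $(1,1)$ and vertical directions visible in the broadcast patterns of Section~\ref{sec:densities}, each changing by a controlled amount along every edge, so that their combined variation between $v^{\star}$ and the corner forces at least $2m+1$ (resp. $2m+2$) steps---or from a counting argument using the coordination sequence $c(t)$ together with \Cref{lem:P(t)}, by verifying that the ball of radius $t-r-1$ about $v^{\star}$ is too small to contain all $2m+m(4m+2)$ vertices of $H_{m,m}$.

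The main obstacle will be the exact distance bookkeeping in the skew geometry of the tiling. Unlike the integer grid, $H_{m,m}$ has two kinds of faces with different crossing costs---the cheap $4$-cycles and the expensive octagons---so the optimal path is not a monotone lattice walk, and the cost of crossing an octagon depends on which corners are used for entry and exit (a full opposite-corner crossing costs $4$, whereas an intermediate corner-to-adjacent-corner crossing costs only $3$). One must therefore verify that the diagonal staircase is genuinely geodesic, that it never benefits from leaving and re-entering the block, and---most delicately---account for the parity of how many whole octagons versus squares a shortest path to the corner must traverse, which is exactly what yields $2m+1$ versus $2m+2$ rather than two values differing by $2$. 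I anticipate the bulk of the technical work to lie here; by contrast, the reduction to the radius and the subgraph lower bound are routine.
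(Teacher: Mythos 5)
First, a point of order: the paper does not prove this statement. It is stated as \Cref{thm:one_tower_h_mm}, a \emph{conjecture}, and the paper explicitly poses its proof (or refutation) as an open problem immediately afterwards; the only evidence offered is \Cref{fig:onetower}, showing that $H_{3,3}$ is $(9,1)$ broadcast dominated by one vertex, together with the remark that increasing $r$ forces an equal increase in $t$. So there is no paper proof to compare against, and your proposal must stand on its own. Its opening reduction is correct and matches the paper's informal reasoning: for $T=\{v\}$ one has $f(u)=t-d(u,v)$ whenever $d(u,v)<t$, so $\gamma_{t,r}(H_{m,m})=1$ if and only if some vertex has eccentricity at most $t-r$, and for the stated $t$ this is exactly $\mathrm{rad}(H_{m,m})\leq 2m+1$ ($m$ even) or $2m+2$ ($m$ odd), with only the upper bound needed for the literal statement. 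Your claimed radii also check out in the verifiable cases: $m=1$ (radius $4$ for $C_8$), $m=2$ (radius $5$, checkable by hand in the coordinate system of Section~\ref{sec:densities}), and $m=3$ (the paper's figure exhibits a vertex of eccentricity $8$).

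The genuine gap is that your plan defers precisely the content that makes this an open conjecture. The assertions that the ``diagonal staircase'' is geodesic, that the eccentricity of the central vertex $v^{\star}$ is attained at the corner octagons, and that the parity bookkeeping yields $2m+1$ versus $2m+2$ are all stated, not proved; moreover, an eccentricity upper bound requires bounding $d(v^{\star},u)$ for \emph{every} vertex $u$ of $H_{m,m}$ (including boundary vertices on partial $4$-cycles and octagon--octagon edges), not just the four far corners, and you acknowledge this ``distance bookkeeping'' as the main obstacle without carrying it out. In addition, one of your two proposed lower-bound routes provably fails: the counting argument via \Cref{lem:P(t)} cannot work for large $m$, because $c(i)$ grows like $\tfrac{8}{3}i$, so $|V(P_v(t))|\approx \tfrac{4}{3}(t-1)^2$; taking radius $2m$ gives roughly $\tfrac{16}{3}m^{2}$ vertices, which exceeds $|V(H_{m,m})|=4m^{2}+4m$ (Theorem~\ref{thm:enum_vertices}) once $m\geq 4$, so the ball is \emph{not} too small to contain the graph and no contradiction arises. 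That leaves only your potential-functional idea (or a direct geodesic analysis in the octagon--square geometry) as a viable route, and neither is executed. In short: the reduction to the radius is sound and the numerology is consistent with all available data, but as written this is a credible research plan rather than a proof, and it does not settle the conjecture.
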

In Figure~\ref{fig:onetower} we illustrate a placement of a broadcasting vertex (in red) that $(t,1)$ dominates $H_{3,3}$. The green circled vertex is the furthest vertex from the broadcasting vertex. This vertex receives the minimum allowed reception $r=1$. Note that increasing the parameter $r$ would necessitate increasing by the same amount the parameter $t$. This  
ensures that the vertex circled in green would then receive the needed reception  $r$.
This naturally leads to our first problem.

\begin{figure}[h!]
    \centering
    \includegraphics[width=2.5in,trim=0 0 0 1in,clip]{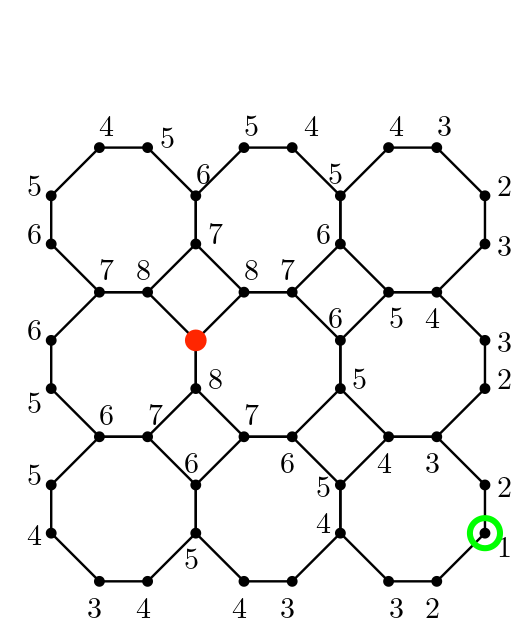}
    \caption{Placement of a broadcasting vertex (in red) with $t=9$ allowing for $H_{3,3}$ to be $(t,1)$ broadcast dominated with a single vertex. We provide the reception received at every vertex and note that the vertex circled in green is the vertex furthest from the dominating vertex in red.}
    \label{fig:onetower}
\end{figure}
\begin{problem} 
Prove or give a counterexample to  Conjecture~\ref{thm:one_tower_h_mm}.
 \end{problem}
 If a counterexample is provided, then we pose the following.
 \begin{problem}
      For fixed integers $m,r\geq $, determine the value of $t$ such that $\gamma_{t,r}(H_{m,m})=1$.
 \end{problem}
One can also generalize the statement of the previous problem to the graph $H_{m,n}$ for $m\neq n$. We also ask the following.
\begin{question}
    Can one adapt the proof technique used in Theorem~\ref{thm:initial_low_bd} to provide a lower bound for the $(t,r)$ broadcast domination number of $H_{m,n}$ for $r>1$?
\end{question}
\begin{question}
As defined, the coordination  
sequences provide a count for the number of vertices that are a fixed distance away from any broadcasting vertex.
Can one utilize the coordination sequence to give an immediate lower bound for the $(t,r)$ broadcast domination number of $H_{m,n}$ for~$r>1$?
\end{question}
The following problems are relate to our results in Section~\ref{sec:densities}.

\begin{problem}\label{problem:prove conjectures}
    Prove that the dominating sets from Conjecture~\ref{conj:densities}   dominate $\gH$ and confirm their densities.
\end{problem}

\begin{question}\label{dobetter}
    Many of the $(t,r)$ broadcast densities for $\gH$ presented (see Theorems~\ref{thm:density21}, \ref{thm:density22}, \ref{thm:density31}, \ref{thm:density32}, \ref{thm:density33}, and \ref{thm:density41}) resulted in non-broadcasting vertices receiving more reception than needed. Are there more optimal dominating sets for $\gH$ for the values $(t,r)$ considered in those results? If not, prove that the densities presented are optimal.
\end{question}
\begin{problem}\label{open:irregularbroadcasts}
    Based on the regularity of the placement of the $(2,2)$ and $(3,3)$ broadcasts on $\gH$ (Theorems \ref{thm:density22} and \ref{thm:density33}), in Corollaries \ref{cor:22bound} and  \ref{cor:33bound} we used these broadcasts to establish the following bounds: 
\[ \gamma_{2,2}(H_{m,n})\leq 2mn+m+n\qquad\mbox{ and }\qquad\gamma_{3,3}(H_{m,n})\leq mn+m+n.\] 
It remains an open problem to use the $(t,r)$ broadcasts for $\gH$ presented in Section \ref{sec:densities} to establish analogous bounds for $\gamma_{t,r}(H_{m,n})$ when $(t,r) \in\{(2,1),(3,1),(3,2),(4,1)\}$. 
\end{problem}

As we noted at the end of Section \ref{sec:tr bounds}, there is an inherent integer partition problem arising whenever we want to determine $(t,r)$ broadcast domination numbers, as the reception is additive and we must sum to the parameter $r$. Such problems are difficult to solve, which contributes to the challenge in solving the following problem.

\begin{problem}\label{open:bounds in general}
    For a fixed pair of integers $(t,r)$ give  bounds for $\gamma_{t,r}(H_{m,n})$ in general. 
\end{problem}

\section*{Acknowledgments}
 J.~Cervantes was partially supported by the SURF and SERA awards  at University of Wisconsin - Milwaukee. J. Cervantes thanks the UW Milwaukee CIberCATSS program for help in support with the programming implementation found in GitHub repository \cite{cervantes2024github}. P.~E.~Harris was supported in part through an EDGE Karen Uhlenbeck Fellowship.

\bibliographystyle{plain}
\bibliography{bibliography}

\end{document}